\numberwithin{equation}{section}
\begin{document}

\Year{2013} %
\Month{January}
\Vol{56} %
\No{1} %
\BeginPage{1} %
\EndPage{XX} %
\AuthorMark{First1 L N {\it et al.}} \ReceivedDay{January 26, 2015}
\AcceptedDay{?? ??, ??}

\newtheorem{algorithm1}{Weak Galerkin Algorithm}
\newtheorem{algorithm2}{Hybridized Weak Galerkin (HWG) Algorithm}
\newtheorem{algorithm3}{Variable Reduction Algorithm}

\title{A Hybridized Weak Galerkin Finite Element Scheme for the Stokes Equations}{}


\author[1]{Qilong Zhai}{}
\author[1]{Ran Zhang}{Corresponding author}
\author[2]{Xiaoshen Wang}{}

\address[{\rm1}]{Department of Mathematics, Jilin
University, Changchun, {\rm 130012}, China;}
\address[{\rm2}]{Department of Mathematics,
University of Arkansas at Little Rock, Little Rock,  {\rm AR
72204},United States;} \Emails{diql@mails.jlu.edu.cn,
zhangran@mail.jlu.edu.cn, xxwang@ualr.edu}\maketitle


 {\begin{center}
\parbox{14.5cm}{\begin{abstract}
In this paper a hybridized weak Galerkin (HWG) finite element method
for solving the Stokes equations in the primary velocity-pressure
formulation is introduced. The WG method uses weak functions and
their weak derivatives which are defined as distributions. Weak
functions and weak derivatives can be approximated by piecewise
polynomials with various degrees. Different combination of
polynomial spaces leads to different WG finite element methods,
which makes WG methods highly flexible and efficient in practical
computation. A Lagrange multiplier is introduced to provide a
numerical approximation for certain derivatives of the exact
solution. With this new feature, HWG method can be used to deal with
jumps of the functions and their flux easily. Optimal order error
estimate are established for the corresponding HWG finite element
approximations for both {\color{black}primal variables} and the
Lagrange multiplier. A Schur complement formulation of the HWG
method is derived for implementation purpose. The validity of the
theoretical results is demonstrated in numerical tests.\vspace{-3mm}
\end{abstract}}\end{center}}

 \keywords{hybridized weak Galerkin finite element methods,
weak gradient, weak divergence, Stokes equation}

 \MSC{65N30, 65N15, 65N12, 74N20}

\renewcommand{\baselinestretch}{1.2}
\begin{center} \renewcommand{\arraystretch}{1.5}
{\begin{tabular}{lp{0.8\textwidth}} \hline \scriptsize {\bf
Citation:}\!\!\!\!&\scriptsize Zhai Q, Zhang R, Wang X. A Hybridized
Weak Galerkin Finite Element Scheme for the Stokes Equations. Sci
China Math, 2013, 56, doi: 10.1007/s11425-000-0000-0\vspace{1mm}
\\
\hline
\end{tabular}}\end{center}

\baselineskip 11pt\parindent=10.8pt  \wuhao
\section{Introduction}

Weak Galerkin (WG) refers to a general finite element technique for
partial differential equations (PDEs) in which differential
operators are approximated by their weak forms as distributions.
Since their introduction, WG finite element methods have been
applied successfully to the discretization of several classes of
partial differential equations, e.g., second order elliptic
equations \cite{WY13, WY14, CWY14, MWY14, MWWY13_2, MWWYZ13}, the
Biharmonic equations \cite{MWY, MWWY13, MWYZ14_2, ZZ}, the Stokes
equations \cite{WY3}, and the Brinkman equations \cite{MWY14_2}. WG
method methods, by design, make use of discontinuous piecewise
polynomials on finite element partitions with arbitrary shape of
polygons and polyhedrons. Weak functions and weak derivatives can be
approximated by piecewise polynomials with various degrees. The
flexibility of {\color{black}WG method} on these aspects of
approximating polynomials makes it an excellent candidate for the
numerical solution of incompressible flow problems.

Hybridization of finite element methods is a technique where
Lagrange multipliers are introduced to relax certain constrains such
as some continuity requirements. The main feature of the HWG method
is that their approximate solutions can be expressed in an
element-by-element fashion. Hybridization \cite{AB85} can be
employed to obtain an efficient implementation for solving PDEs. The
generalization of this idea to mixed finite elements has been
investigated in \cite{Babuska, B74, BDM85, BBM92, RT77}. The idea of
hybridization was also used in discontinuous Galerkin methods
\cite{ES10, Lehrenfeld10, NPC10} to derive hybridizable
discontinuous Galerkin (HDG) {\color{black}\cite{CS13, CS14,
CGL74}}.

In this paper, the WG finite element formulation developed in
\cite{WY3} is hybridized to obtain our new hybridized weak Galerkin
finite element method for solving Stokes equations.
{\color{black}This HWG formulation can be modified easily to solve
interface problems by adding two functionals arising from the jump
condition to the right-hand side. A Schur complement formulation of
the HWG method is derived for implementation purpose. By eliminating
the interior unknowns and the Lagrange multipliers, the Schur
complement
formulation yields a system with much smaller {\color{black}size. 
We} shall show that hybridization is a natural approach for the weak
Galerkin finite element method of \cite{WY3}. We shall also
establish a theoretical foundation to address critical issues such
as stability and convergence for the HWG finite element method.}

The paper is organized as follows. In Section 2, we briefly discuss
the continuous Stokes problem and recall some basic results for
later reference. After presenting some standard notations in Sobolev
spaces in Section 3, we introduce two weakly-defined differential
operators: weak gradient and weak divergence. The HWG finite element
scheme for the Stokes problem is developed in Section 4. In Section
5, we shall study the stability and solvability of the HWG scheme.
In particular, the usual inf- sup condition is established for the
HWG scheme. In Section 6, we shall derive an error equation for the
HWG approximations. Optimal-order error estimates for the WG finite
element approximations are also derived in this Section. The
equivalence of HWG formulation and its Schur complement formulation
is proved in Section 7. Finally in Section 8, numerical experiments
are conducted.

\section{The model problem}

Let $\Omega\subset \mathbb{R}^d$ be a polygonal or polyhedral domain
in $\mathbb{R}^d$ for $d=2, 3$ respectively. As a model for the flow
of an incompressible viscous fluid confined in $\Omega$, we consider
the stationary Stokes problem with nonhomogeneous Dirichlet boudary
conditions, given by
\begin{eqnarray}\label{Stokes}
-\Delta \textbf{u}+\nabla p &=& \textbf{f}, \quad {\rm in}\ \Omega,\\
\label{divEquation}\nabla \cdot\textbf{u}&=&0,\quad {\rm in}\
\Omega,
\\
\label{boundarycond}\textbf{u}&=&\textbf{g},\quad {\rm on}\
\partial\Omega.
\end{eqnarray}
Throughout the presentation, we assume that the unit external
volumetric force acting on the fluid $\textbf{f}\in
[L^2(\Omega)]^d$.

The weak form in the primary velocity-pressure formulation for the
Stokes problem (\ref{Stokes})-(\ref{boundarycond}) seeks
$\textbf{u}\in [H^1(\Omega)]^d$ and $p\in L^2_0(\Omega)$ satisfying
$\textbf{u}=\textbf{g}$ on $\partial \Omega$ and
\begin{eqnarray}\label{variational_Stokes}
(\nabla\textbf{u}, \nabla \mathbf{v})-(\nabla \cdot \mathbf{v}, p)
&=& (\textbf{f},\mathbf{v}),
\\
\label{variational_Div}(\nabla \cdot\textbf{u}, q)&=&0,
\end{eqnarray}
for all $\mathbf{v}\in [H^1_0(\Omega)]^d$ and $q\in L^2_0(\Omega)$.

Recently a weak Galerkin finite element method has been developed
for solving the Stokes equations in \cite{WY3}. The main idea of
weak Galerkin finite element methods is the introduction of weak
functions and their corresponding weak derivatives in the algorithm
design. With well defined weak functions and weak derivatives, a
weak Galerkin finite element formulation for the Stokes equations is
derived from the variational form of the PDE
(\ref{variational_Stokes})-(\ref{variational_Div}) by replacing
regular derivatives with weak derivatives and possibly adding a
parameter independent stabilizer: find $\textbf{u}_h$ and $p_h$ from
properly-defined finite element spaces satisfying
\begin{eqnarray}\label{weak_Stokes}
(\nabla_w\textbf{u}_h, \nabla_w \mathbf{v})-(\nabla_w \cdot
\mathbf{v}, p_h) +s(\textbf{u}_h,\mathbf{v})&=&
(\textbf{f},\mathbf{v}),
\\
\label{weak_Div}(\nabla_w\cdot\textbf{u}_h, q)&=&0
\end{eqnarray}
for all test functions $\mathbf{v}$ and $q$ in test spaces. In this
paper, the WG finite element formulation developed in \cite{WY3} is
hybridized to obtain our new hybridized weak Galerkin finite element
method for solving Stokes equation
(\ref{Stokes})-(\ref{boundarycond}).

\section{Weak Differential Operators and Discrete Weak Gradient}

Let $D$ be any open bounded domain with Lipschitz continuous
boundary in $\mathbb{R}^d, d=2, 3$. We use the standard definition
for the Sobolev space $H^s(D)$ and their associated inner products
$(\cdot, \cdot)_{s, D}$, norms $\|\cdot\|_{s, D}$, and seminorms
$|\cdot|_{s, D}$ for any $s\ge 0$. For example, for any integer
$s\ge 0$, the seminorm $|\cdot|_{s, D}$ is given by
$$
|v|_{s, D}=\left(\sum_{|\alpha|=s}\int_D |\partial^\alpha v|^2 {\rm
d} D\right)^{\frac{1}{2}}
$$
with the usual notation
$$
\alpha=(\alpha_1, \cdots, \alpha_d), \quad
|\alpha|=\alpha_1+\cdots+\alpha_d, \quad
\partial^\alpha=\prod_{j=1}^d \partial_{x_j}^{\alpha_j}.
$$
The Sobolev norm $\|\cdot\|_{m,D}$ is given by
$$
\|v\|_{m, D}=\left(\sum_{j=0}^{m} |v|_{j, D}^2\right)^{\frac{1}{2}}.
$$

The space $H^0(D)$ coincides with $L^2(D)$, for which the norm and
the inner product are denoted by $\|\cdot\|_D$ and
$(\cdot,\cdot)_D$, respectively. When $D=\Omega$, we shall drop the
subscript $D$ in the norm and in the inner product notation.

The space $H({\rm div}; D)$  is defined as the set of vector-valued
functions on $D$ which, together with their divergence, are square
integrable; i.e.,
$$
H({\rm div}; D)=\{\mathbf{v}: \mathbf{v}\in [L^2(D)]^d, \nabla \cdot
\mathbf{v}\in L^2(D)\}.
$$

Let $T$ be a polygonal or polyhedral domain with boundary $\partial
T$. A weak vector-valued function on the region $T$ refers to a
vector-valued function $\mathbf{v}=\{\mathbf{v}_0, \mathbf{v}_b\}$
such that $\mathbf{v}_0\in [L^2(T)]^d$ and $\mathbf{v}_b\in
[H^{\frac12}(\partial T)]^d$. Let
\begin{eqnarray}\label{GradSpace_Weakfunction}
\mathcal{V}(T)=\{\mathbf{v}=\{\mathbf{v}_0,
\mathbf{v}_b\}:¡¡\mathbf{v}_0\in [L^2(T)]^d, \mathbf{v}_b\in
[H^{\frac12}(\partial T)]^d\}
\end{eqnarray}

Recall that, for any $\mathbf{v}\in \mathcal{V}(T)$, the {\rm weak
gradient} of $\mathbf{v}$ is defined as a linear functional
$\nabla_w \mathbf{v}$ in the dual space of $[H(div,T)]^d$ whose
action on each $q\in [H(div,T)]^d$ is given by
\begin{eqnarray}
\label{wGradient} (\nabla_w \mathbf{v}, q)_T=-(\mathbf{v}_0,
\nabla\cdot q)_T+\langle \mathbf{v}_b, q\cdot
\textbf{n}\rangle_{\partial T},
\end{eqnarray}
where $\textbf{n}$ is the outer unit normal vector along $\partial
T$, $(\cdot, \cdot)_T$ stands for the $L^2$-inner product in
$[L^2(T)]^d$ and $\langle\cdot, \cdot\rangle_{\partial T}$ is the
inner product in $[H^{\frac12}(\partial T)]^d$.

A discrete version of the weak gradient operator $\nabla_w$ is an
approximation, denoted by $\nabla_{w,r,T}$ in the space of
polynomials of degree $r$ such that
\begin{eqnarray}
\label{Discrete_wGradient} (\nabla_{w,r,T} \mathbf{v},
q)_T=-(\mathbf{v}_0, \nabla \cdot q)_T+\langle \mathbf{v}_b, q\cdot
\textbf{n}\rangle_{\partial T},\quad \forall q\in [P_r(T)]^{d\times
d}.
\end{eqnarray}

From the integration by parts, we have
$$
(\mathbf{v}_0, \nabla q)_T=-(\nabla \mathbf{v}_0, q)_T+\langle
\mathbf{v}_0, q\cdot \textbf{n}\rangle_{\partial T},
$$
Substituting the above identity into (\ref{Discrete_wGradient})
yields
\begin{eqnarray}
\label{Discrete_wGradient-useful} \qquad(\nabla_{w,r,T} \mathbf{v},
q)_T-(\nabla \mathbf{v}_0, q)_T=\langle \mathbf{v}_b-\mathbf{v}_0,
q\cdot \textbf{n}\rangle_{\partial T}
\end{eqnarray}
for all $q\in [P_r(T)]^{d\times d}$.

To define a weak divergence, we require weak function
$\mathbf{v}=\{\mathbf{v}_0, \mathbf{v}_b\}$ to be such that
$\mathbf{v}_0\in [L^2(T)]^d$ and $\mathbf{v}_b\cdot\textbf{n}\in
H^{-\frac12}(\partial T)$. Denote
\begin{eqnarray}\label{DivSpace_Weakfunction}
V(T)=\{\mathbf{v}=\{\mathbf{v}_0, \mathbf{v}_b\}:¡¡\mathbf{v}_0\in
[L^2(T)]^d, \mathbf{v}_b\cdot\textbf{n}\in H^{-\frac12}(\partial
T)\}
\end{eqnarray}

Recall that, for any $\mathbf{v}\in \mathcal{V}(T)$, the {\rm weak
divergence} of $\mathbf{v}$ is defined as a linear functional
$\nabla_w\cdot \mathbf{v}$ in the dual space of $H^1(T)$ whose
action on each $\varphi\in H^1(T)$ is given by
\begin{eqnarray}
\label{wDivergence} (\nabla_w \cdot\mathbf{v},
\varphi)_T=-(\mathbf{v}_0, \nabla \varphi)_T+\langle
\mathbf{v}_b\cdot \textbf{n}, \varphi\rangle_{\partial T},
\end{eqnarray}
where $\textbf{n}$ is the outer unit normal vector along $\partial
T$, $(\cdot, \cdot)_T$ stands for the $L^2$-inner product in
$L^2(T)$ and $\langle\cdot, \cdot\rangle_{\partial T}$ is the inner
product in $H^{\frac12}(\partial T)$.

A discrete version of the weak divergence operator $\nabla_{w}\cdot$
is an approximation, denoted by $(\nabla_{w,r,T}\cdot)$ in the space
of polynomials of degree $r$ such that
\begin{eqnarray}
\label{Discrete_wDivergence} (\nabla_{w,r,T}\cdot \mathbf{v},
\varphi)_T=-(\mathbf{v}_0, \nabla \varphi)_T+\langle
\mathbf{v}_b\cdot \textbf{n}, \varphi\rangle_{\partial T},\quad
\forall \varphi\in P_r(T).
\end{eqnarray}

From the integration by parts, we have
$$
(\mathbf{v}_0, \nabla \varphi)_T=-(\nabla \cdot \mathbf{v}_0,
\varphi)_T+\langle \mathbf{v}_0\cdot \textbf{n},
\varphi\rangle_{\partial T}.
$$
Substituting the above identity into (\ref{Discrete_wGradient})
yields
\begin{eqnarray}
\label{Discrete_wDivergence-useful} \qquad(\nabla_{w,r,T}\cdot
\mathbf{v}, \varphi)_T-(\nabla \cdot\mathbf{v}_0, \varphi)_T=\langle
(\mathbf{v}_b-\mathbf{v}_0)\cdot \textbf{n},
\varphi\rangle_{\partial T}
\end{eqnarray}
for all $\varphi\in P_r(T)$.

\section{A Hybridized Weak Galerkin Formulation}

The goal of this section is to introduce a hybridized formulation
for the weak Galerkin finite element algorithm that was first
designed in \cite{WY3}.

\subsection{Notations}

Let $\mathcal{T}_h$ be a partition of the domain $\Omega$ into
polygons in 2D or polyhedra in 3D. Assume that $\mathcal{T}_h$ is
shape regular in the sense as defined in \cite{WY14}. Denote by
$\mathcal{E}_h$ the set of all edges or flat faces in
$\mathcal{T}_h$, and let
$\mathcal{E}_h^0=\mathcal{E}_h\setminus\partial \Omega$ be the set
of all interior edges or flat faces. Denote by $h_T$ the diameter of
$T\in \mathcal{T}_h$ and $h=\max_{T\in \mathcal{T}_h}h_T$ the
meshsize for the partition $\mathcal{T}_h$.

On each element $T\in \mathcal{T}_h$, there are spaces of weak
function $\mathcal{V}(T)$ and $V(T)$ defined as in
(\ref{GradSpace_Weakfunction}) and (\ref{DivSpace_Weakfunction}),
respectively. Denote by $\mathcal{V}$ and $\Lambda$ the function
space on $\mathcal{T}_h$ and $\mathcal{E}_h$ given respectively by
\begin{eqnarray}
\label{FunctionalSpace} \mathcal{V}=\prod_{T\in
\mathcal{T}_h}\mathcal{V}(T),\quad \Lambda=\prod_{T\in
\mathcal{T}_h}[H^{\frac12}(\partial T)]^d.
\end{eqnarray}
Note that the values of functions in the spaces $\mathcal{V}(T_1)$
and $\mathcal{V}(T_2)$ are not related for any elements $T_1$ and
$T_2$, even if $T_1$ and $T_2$ share an interior edge or flat face
$e\in \mathcal{E}_h^0$. The jump of $\mathbf{v}=\{\mathbf{v}_0,
\mathbf{v}_b\}$ on $e$ is given by
\begin{eqnarray}
\label{JumpFunction} [\![\mathbf{v}]\!]_e=\left\{\begin{array}{ll}
\mathbf{v}_b|_{\partial T_1}-\mathbf{v}_b|_{\partial T_2}, \quad &
e\in \mathcal{E}_h^0,
\\
\mathbf{v}_b, & e\in\partial\Omega,
\end{array}\right.
\end{eqnarray}
where $\mathbf{v}_b|_{\partial T_i}$ is the value of $\mathbf{v}$ on
$e$ as seen from the element $T_i$. The order of $T_1$ and $T_2$ is
non-essential as long as the difference is taken in a consistent way
in all the formulas. Analogously, for any function
$\boldsymbol{\lambda}\in \Lambda$, we define its similarity on $e\in
\mathcal{E}_h$ by
\begin{eqnarray}
\label{Similarity} \langle\!\langle
\boldsymbol{\lambda}\rangle\!\rangle_e=\left\{\begin{array}{ll}
\boldsymbol{\lambda}|_{\partial T_1}+\boldsymbol{\lambda}|_{\partial
T_2}, \quad & e\in \mathcal{E}_h^0,
\\
0, & e\in\partial\Omega.
\end{array}\right.
\end{eqnarray}
Denote by $\langle\!\langle \boldsymbol{\lambda}\rangle\!\rangle$
the similarity of $\boldsymbol{\lambda}$ in $\mathcal{E}_h$.

For any integer $k\ge 1$, denote by $W_k(T)$ the discrete function
space as follows:
$$
W_k(T)=\{q: q\in L_0^2(\Omega), q|_T\in P_{k-1}(T)\}.
$$
Let $V_k(T)$ denote the discrete weak function space as follows:
$$
V_k(T)=\{\mathbf{v}=\{\mathbf{v}_0, \mathbf{v}_b\}:¡¡\{\mathbf{v}_0,
\mathbf{v}_b\}|_T \in [P_k(T)]^d\times [P_{k-1}(e)]^d, e\subset
\partial T\}.
$$
Let $\Lambda_{k}(\partial T)$  denote
$$
\Lambda_k(\partial T)=\{\boldsymbol{\lambda}:
\boldsymbol{\lambda}|_e\in [P_{k-1}(e)]^d, e\subset
\partial T\}.
$$
By patching $W_k(T)$, $V_k(T)$, and $\Lambda_{k}(\partial T)$ over
all the elements $T\in \mathcal{T}_h$, we obtain three weak Galerkin
finite element spaces $W_h$, $V_h$, and $\Lambda_h$ given by
\begin{eqnarray}
\label{WeakFunctionalSpace} W_h=\prod_{T\in \mathcal{T}_h}W_k(T),
\quad V_h=\prod_{T\in \mathcal{T}_h}V_k(T),\quad
\Lambda_h=\prod_{T\in \mathcal{T}_h}\Lambda_k(\partial T).
\end{eqnarray}

Denote by $V_h^0$ the subspace of $V_h$ consisting of discrete weak
functions with vanishing boundary value
$$
V_h^0=\{\mathbf{v}=\{\mathbf{v}_0, \mathbf{v}_b\}\in
V_h:¡¡\mathbf{v}_b=0\ \  {\rm on} \ \partial \Omega\}.
$$
Furthermore, let $\mathcal{V}_h$ be the subspace of $V_h$ consisting
of functions without jump on each interior edge or flat face
$$
\mathcal{V}_h=\{\mathbf{v}\in V_h: [\![\mathbf{v}]\!]_e=0, e\in
\mathcal{E}_h^0\}.
$$
Denote by $\mathcal{V}_h^0$ a subspace of $\mathcal{V}_h$ consisting
of functions with vanishing boundary values
$$
\mathcal{V}_h^0=\{\mathbf{v}\in \mathcal{V}_h: \mathbf{v}_b|_e=0,
e\in
\partial \Omega\}.
$$
Let $\Xi_h$ be the subspace of {\color{black}$\Lambda_h$} consisting
of functions with similarity zero across each edge or flat face
$$
\Xi_h=\{\boldsymbol{\lambda}\in \Lambda_h: \langle\!\langle
\boldsymbol{\lambda}\rangle\!\rangle_e =0,e\in \mathcal{E}_h\}.
$$
The functions in the space $\Xi_h$ serve as Lagrange multipliers in
hybridization methods.

Denote by $\nabla_{w,k-1}$ and $(\nabla_{w,k-1}\cdot)$ the discrete
weak gradient and the discrete weak divergence on the finite element
space $V_h$. They can be computed by using
(\ref{Discrete_wGradient}) and (\ref{Discrete_wDivergence}) on each
element $T$, respectively.

For each element $T\in \mathcal{T}_h$, denote by $Q_0$ the $L^2$
projection operator from $[L^2(T)]^d$ onto $[P_k(T)]^d$. For each
edge or face $e\in \mathcal{E}_h$, denote by $Q_b$ the $L^2$
projection from $[L^2(e)]^d$ onto $[P_{k-1}(e)]^d$. We shall combine
$Q_0$ with $Q_b$ by writing $Q_h=\{Q_0, Q_b\}$.

\subsection{Algorithm}
On each element $T\in \mathcal{T}_h$, we introduce four bilinear
forms given below:
\begin{eqnarray}
\label{bilinear_sT}s_T(\mathbf{v}, \mathbf{w})&=& h_T^{-1}\langle
Q_b \mathbf{v}_0-\mathbf{v}_b, Q_b
\mathbf{w}_0-\mathbf{w}_b\rangle_{\partial T},
\\
\label{bilinear_aT}a_T(\mathbf{v}, \mathbf{w})&=& (\nabla_w
\mathbf{v}, \nabla_w \mathbf{w})_T+s_T(\mathbf{v}, \mathbf{w}),
\\
\label{bilinear_bT}b_{T}(\mathbf{v}, q)&=&(\nabla_w \cdot
\mathbf{v}, q)_T,
\\
\label{bilinear_cT}c_{T}(\mathbf{v},
\boldsymbol{\lambda})&=&\langle\mathbf{v}_b,
\boldsymbol{\lambda}\rangle_{\partial T},
\end{eqnarray}
for $\mathbf{v}=\{\mathbf{v}_0, \mathbf{v}_b\}\in V_k(T)$,
$\textbf{w}=\{\textbf{w}_0, \textbf{w}_b\}\in V_k(T)$, $q\in W_h(T)$
and $\boldsymbol{\lambda} \in \Lambda_{k}(\partial T)$.

Their sums over all $T\in \mathcal{T}_h$ {\color{black}yield} four
globally-defined bilinear forms:
\begin{eqnarray}
\label{bilinear_s}s(\mathbf{v}, \mathbf{w})&=&
\sum_{T\in\mathcal{T}_h}s_T(\mathbf{v}, \mathbf{w}), \quad
\mathbf{v},\mathbf{w}\in V_h,
\\
\label{bilinear_a}a(\mathbf{v}, \mathbf{w})&=&
\sum_{T\in\mathcal{T}_h}a_{T}(\mathbf{v}, \mathbf{w}),\quad
\mathbf{v},\mathbf{w}\in V_h,
\\
\label{bilinear_b}b(\mathbf{v}, q)&=&
\sum_{T\in\mathcal{T}_h}b_{T}(\mathbf{v}, q),\quad \mathbf{v}\in
V_h,q\in W_h,
\\
\label{bilinear_c}c(\mathbf{v}, \boldsymbol{\lambda})&=&
\sum_{T\in\mathcal{T}_h}c_{T}(\mathbf{v},
\boldsymbol{\lambda}),\quad \mathbf{v}\in
V_h,\boldsymbol{\lambda}\in \Lambda_h.
\end{eqnarray}

The following weak Galerkin finite element scheme for the Stokes
equation (\ref{Stokes}) was introduced and analyzed in \cite{WY3}.
\begin{algorithm1}\label{WGA}
A numerical approximation for (\ref{Stokes})-(\ref{boundarycond})
can be obtained by seeking $\bar{\mathbf{u}}_h=\{\bar{\mathbf{u}}_0,
\bar{\mathbf{u}}_b\}\in V_h$ and $\bar{p}_h\in W_h$ such that
$\bar{\mathbf{u}}_b=Q_b \textbf{g}$ on $\partial \Omega$ and
\begin{eqnarray}\label{WG1}
a(\bar{\mathbf{u}}_h, \mathbf{v})-b(\mathbf{v},
\bar{p}_h)&=&(\mathbf{f}, \mathbf{v}_0),
\\
\label{WG2}b(\bar{\mathbf{u}}_h, q)&=& 0,
\end{eqnarray}
for all $\mathbf{v}=\{\mathbf{v}_0, \mathbf{v}_b\}\in
\mathcal{V}_h^0$ and $q\in W_h$.
\end{algorithm1}

The weak Galerkin finite element algorithm 1 can be hybridized in
the finite element space $\mathcal{V}_h$ by using a Lagrange
multiplier that shall enforce the continuity of the functions in
$V_h$ on interior element boundaries. The corresponding formulation
can be described as follows.

\begin{algorithm2}\label{HWGA}
A numerical approximation for (\ref{Stokes})-(\ref{boundarycond})
can be obtained by seeking
$(\mathbf{u}_h;p_h;\boldsymbol{\lambda}_h)\in V_h\times W_h\times
\Xi_h$ such that $\mathbf{u}_b=Q_b \textbf{g}$ on $\partial \Omega$
and
\begin{eqnarray}\label{HWG1}
a(\mathbf{u}_h, \mathbf{v})-b(\mathbf{v},
p_h)-c(\mathbf{v},\boldsymbol{\lambda}_h)&=&(\mathbf{f},
\mathbf{v}_0),
\\
\label{HWG2}b(\mathbf{u}_h, q)+c(\mathbf{u}_h,\boldsymbol{\mu})&=&
0,
\end{eqnarray}
for all $\mathbf{v}=\{\mathbf{v}_0, \mathbf{v}_b\}\in V^0_h$, $q\in
W_h$ and $\boldsymbol{\mu}\in\Xi_h$.
\end{algorithm2}


\begin{lemma}
The WG finite element scheme (\ref{HWG1})-(\ref{HWG2}) has a unique
solution.
\end{lemma}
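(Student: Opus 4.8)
The plan is to exploit that (\ref{HWG1})--(\ref{HWG2}) is a square linear system on the finite-dimensional space $V_h\times W_h\times\Xi_h$: after subtracting a fixed function matching the boundary data $Q_b\mathbf{g}$, the trial and test spaces coincide as $V_h^0\times W_h\times\Xi_h$. Hence existence is equivalent to uniqueness, and it suffices to show that the homogeneous problem, with $\mathbf{f}=0$ and $\mathbf{g}=0$, admits only the trivial solution. So I assume $\mathbf{f}=0$, $\mathbf{g}=0$; then $\mathbf{u}_b=Q_b\mathbf{g}=0$ on $\partial\Omega$, so $\mathbf{u}_h\in V_h^0$ is itself an admissible test function.

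First I would run the standard energy argument: take $\mathbf{v}=\mathbf{u}_h$ in (\ref{HWG1}) and $(q,\boldsymbol{\mu})=(p_h,\boldsymbol{\lambda}_h)$ in (\ref{HWG2}), then add the two identities. The $b(\cdot,\cdot)$ and $c(\cdot,\cdot)$ contributions cancel exactly, leaving $a(\mathbf{u}_h,\mathbf{u}_h)=0$, i.e. $\|\nabla_w\mathbf{u}_h\|^2+s(\mathbf{u}_h,\mathbf{u}_h)=0$. This forces $\nabla_w\mathbf{u}_h=0$ on every $T$ and, from the definition (\ref{bilinear_sT}), $Q_b\mathbf{u}_0=\mathbf{u}_b$ on each $\partial T$. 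Feeding $\nabla_w\mathbf{u}_h=0$ into the identity (\ref{Discrete_wGradient-useful}) with $q=\nabla\mathbf{u}_0$, and using $Q_b\mathbf{u}_0=\mathbf{u}_b$ together with the fact that $(\nabla\mathbf{u}_0)\cdot\mathbf{n}|_e\in[P_{k-1}(e)]^d$ so that $Q_b$ acts as the identity against it, the boundary term drops and I obtain $\|\nabla\mathbf{u}_0\|_T^2=0$. Thus $\mathbf{u}_0$ is constant on each element and $\mathbf{u}_b=\mathbf{u}_0$ there.

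Next I would use the constraint (\ref{HWG2}) with $q=0$ and $\boldsymbol{\mu}\in\Xi_h$ arbitrary. Rewriting $c(\mathbf{u}_h,\boldsymbol{\mu})$ face by face and using $\boldsymbol{\mu}|_{\partial T_2}=-\boldsymbol{\mu}|_{\partial T_1}$ (the defining property of $\Xi_h$), the sum collapses to $\sum_{e\in\mathcal{E}_h^0}\langle[\![\mathbf{u}_h]\!]_e,\boldsymbol{\mu}|_{\partial T_1}\rangle_e$, the boundary faces contributing nothing since $\mathbf{u}_b=0$ there. Choosing $\boldsymbol{\mu}|_{\partial T_1}=[\![\mathbf{u}_h]\!]_e$ yields $[\![\mathbf{u}_h]\!]_e=0$ on every interior face. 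Since $\mathbf{u}_0$ is a per-element constant equal to $\mathbf{u}_b$, vanishing jumps force a single global constant, which must be $0$ because $\mathbf{u}_b=0$ on $\partial\Omega$. Hence $\mathbf{u}_h=0$.

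The hard part will be the final decoupling: with $\mathbf{u}_h=0$, (\ref{HWG1}) reduces to $b(\mathbf{v},p_h)+c(\mathbf{v},\boldsymbol{\lambda}_h)=0$ for all $\mathbf{v}\in V_h^0$, and I must extract $p_h=0$ and $\boldsymbol{\lambda}_h=0$ --- this is exactly a non-degeneracy (inf--sup type) statement for the pair $(b,c)$. I would test first with $\mathbf{v}_0=0$ and $\mathbf{v}_b$ free: using (\ref{wDivergence}) the equation becomes $\sum_T\langle\mathbf{v}_b,\,p_h\mathbf{n}+\boldsymbol{\lambda}_h\rangle_{\partial T}=0$, and since $\mathbf{v}_b$ may be chosen independently on the two sides of each interior face, this gives $\boldsymbol{\lambda}_h|_{\partial T}=-p_h|_T\mathbf{n}_T$ on interior faces; the similarity condition $\langle\!\langle\boldsymbol{\lambda}_h\rangle\!\rangle_e=0$ then forces the traces of $p_h$ to match across interior faces. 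I would test second with $\mathbf{v}_b=0$ and $\mathbf{v}_0=\nabla p_h$ (admissible since $\nabla p_h|_T\in[P_{k-2}(T)]^d$), which gives $\|\nabla p_h\|_T^2=0$, so $p_h$ is a per-element constant; trace-continuity across interior faces makes it a global constant, and the constraint $p_h\in L^2_0(\Omega)$ forces $p_h=0$, whence $\boldsymbol{\lambda}_h=0$ on the interior faces as well. The delicate points to get right are the independence of $\mathbf{v}_b$ on the two sides of an interior face and the interplay between the similarity condition on $\boldsymbol{\lambda}_h$ and the pressure trace-jumps; these are precisely what make the $(b,c)$ block injective.
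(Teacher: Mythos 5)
Your proof is correct, and its first two stages (the reduction of existence to uniqueness on the finite-dimensional space, the energy argument giving $a(\mathbf{u}_h,\mathbf{u}_h)=0$, and the jump argument forcing $\mathbf{u}_h=0$) coincide with the paper's, except that you are more careful on two points the paper glosses over: you note that $s_T=0$ only yields $Q_b\mathbf{u}_0=\mathbf{u}_b$ (not $\mathbf{u}_0=\mathbf{u}_b$) and handle this by testing the weak gradient against $q=\nabla\mathbf{u}_0$, whose normal trace lies in $[P_{k-1}(e)]^d$; and you state explicitly the square-system argument for existence, which the paper leaves implicit. Where you genuinely diverge is the final pressure--multiplier decoupling. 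The paper first takes $\mathbf{v}_b=0$ to get $\nabla p_h=0$ elementwise, then builds a \emph{single-valued} $\mathbf{v}_b$ out of the pressure jumps (for which $c(\mathbf{v},\boldsymbol{\lambda}_h)=0$ by the similarity-zero property of $\Xi_h$) to force $[\![p_h]\!]_e=0$, then kills $\boldsymbol{\lambda}_h$ by taking $\mathbf{v}_b=\boldsymbol{\lambda}_h$. You instead test first with $\mathbf{v}_0=0$ and $\mathbf{v}_b$ chosen independently on the two sides of each interior face, which yields the pointwise identity $\boldsymbol{\lambda}_h|_{\partial T}=-p_h|_T\mathbf{n}_T$ (legitimate since $p_h\mathbf{n}|_e\in[P_{k-1}(e)]^d$ on flat faces), extract trace continuity of $p_h$ from the similarity condition, and only then take $\mathbf{v}_0=\nabla p_h$ to get elementwise constancy. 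Both routes use the same three ingredients (localized tests in $V_h^0$, the similarity-zero structure of $\Xi_h$, and the zero-mean constraint on $p_h$), but yours buys an extra structural fact the paper never states --- that in the homogeneous problem the discrete multiplier is exactly $-p_h\mathbf{n}$, the discrete shadow of the continuous relation $\boldsymbol{\lambda}=\nabla\mathbf{u}\cdot\mathbf{n}-p\mathbf{n}$ --- while the paper's ordering avoids having to identify $\boldsymbol{\lambda}_h$ at all until $p_h$ is already known to vanish. Note also that, like the paper, your argument determines $\boldsymbol{\lambda}_h$ only on interior faces, since test functions in $V_h^0$ vanish on $\partial\Omega$; this matches the fact that the norm (\ref{Xi-norm}) on $\Xi_h$ only sees interior edges, so neither proof loses anything there.
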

\begin{proof}
Let $\mathbf{f}=\textbf{0}$, we shall show that the solution of
(\ref{HWG1})-(\ref{HWG2}) is trivial. To this end, taking
$\mathbf{v}=\mathbf{u}_h$, $q=p_h$, and
$\boldsymbol{\mu}=\boldsymbol{\lambda}_h$ and subtracting
(\ref{HWG2}) from (\ref{HWG1}) we arrive at
$$
a(\mathbf{u}_h,\mathbf{u}_h)=0.
$$
By the definition of $a(\cdot,\cdot)$, we know
$\nabla_w\mathbf{u}_h=0$ on each $T\in\mathcal{T}_h$, $
\mathbf{u}_0=\mathbf{u}_b$ on each $\partial T$.

By (\ref{Discrete_wDivergence-useful}) and the fact that
$\mathbf{u}_b=\mathbf{u}_0$ on $\partial T$ we have, for any
$\tau\in[P_{k-1}(T)]^{d\times d}$,
\begin{eqnarray*}
0&=&(\nabla_w\mathbf{u}_h,\tau)_T
\\
&=&(\nabla\mathbf{u}_0,\tau)_T-\langle\mathbf{u}_0-\mathbf{u}_b,\tau\cdot\mathbf{n}\rangle_{\partial
T}
\\
&=&(\nabla\mathbf{u}_0,\tau)_T,
\end{eqnarray*}
which implies $\nabla\mathbf{u}_0=0$ on each $T\in\mathcal{T}_h$ and
thus $\mathbf{u}_0$ is a constant. Since $\mathbf{u}_0=\mathbf{u}_b$
on each $\partial T$, we have
\begin{eqnarray*}
b(\mathbf{u}_h,q)=-(\mathbf{u}_0, \nabla q)+
\langle\mathbf{u}_0\cdot \mathbf{n}, q\rangle=(\nabla \cdot
\mathbf{u}_h , q)=0.
\end{eqnarray*}
From (\ref{HWG2}), we obtain
\begin{eqnarray*}
c(\mathbf{u}_h,\boldsymbol{\mu})=0.
\end{eqnarray*}
Let $\boldsymbol{\mu}=[\![\mathbf{u}_b]\!]$ in above equation, then
it follows that
$$
\sum_{T\in\mathcal{T}_h} h\|[\![\mathbf{u}_b]\!]\|_e^2=0.
$$
Thus $[\![\mathbf{u}_b]\!]=0$, which implies that $\mathbf{u}_0$ is
continuous and we arrive at $\mathbf{u}_h=\{\b0, \b0\}$ in $\Omega$.

Let $\mathbf{v}_b=0$ in (\ref{HWG1}), then it follows from
$\mathbf{u}_h=\{\b0,\b0\}$ and $\mathbf{f}=\b0$ that
\begin{eqnarray*}
b(\mathbf{v},p_h)=(\nabla_w \cdot \mathbf{v}, p_h)=-(\mathbf{v}_0,
\nabla p_h)=0.
\end{eqnarray*}
Hence we have $\nabla p_h=0$ on each $T\in\mathcal{T}_h$. Thus $p_h$
is a constant in $T$. Let $\mathbf{v}_b|_e=[\![p_h]\!]_e,
\mathbf{v}_0=0$. Then
$$
c(\mathbf{v}, \boldsymbol{\lambda}_h)=0.
$$
Thus
$$
0=b(\mathbf{v}, p_h)=\sum_{e\in\mathcal{E}_h} \|[\![p_h]\!]\|_e^2.
$$
Hence $p_h$ is continuous. From $p_h\in L_0^2(\Omega)$, we would
obtain $p_h=0$ in $\Omega$.

Finally, Let $\mathbf{v}_b=\boldsymbol{\lambda}_h$, from
$\mathbf{u}_h=\{\b0,\b0\}$ and $p_h=0$ in $\Omega$, we obtain
$$
c(\mathbf{v}, \boldsymbol{\lambda}_h)=0,
$$
which means $\boldsymbol{\lambda}_h=0$.

This completes the proof of the lemma.
\end{proof}

\subsection{The Relation between WG and HWG}

The rest of this section will show that the above two schemes are
equivalent in that the solutions $\bar{\mathbf{u}}_h, \bar{p}_h$
from (\ref{WG1})-(\ref{WG2}) and $\mathbf{u}_h, p_h$ from
(\ref{HWG1})-(\ref{HWG2}) are identical, respectively.

For any $\mathbf{v}\in \mathcal{V}^0_h$, let
\begin{eqnarray}\label{tripbarnorm}
{|\hspace{-.02in}|\hspace{-.02in}|} \mathbf{v}
{|\hspace{-.02in}|\hspace{-.02in}|}^2=a(\mathbf{v},\mathbf{v})=\sum_{T\in\mathcal{T}_h}\|\nabla_w\mathbf{v}\|_T^2+\sum_{T\in\mathcal{T}_h}h_T^{-1}\|Q_b\mathbf{v}_0-\mathbf{v}_b\|_{\partial
T}^2.
\end{eqnarray}
It has been verified in \cite{WY3} that (\ref{tripbarnorm}) defines
a norm in the vector space $\mathcal{V}^0_h$.
\begin{theorem}\label{theorem:wg-hwg}
Let $\mathbf{u}_h\in V_h$, $p_h\in W_h$ be the first two components
of the solution of the hybridized WG algorithm
(\ref{HWG1})-(\ref{HWG2}). Then, we have $[\![\mathbf{u}]\!]_e=0$
for all $e\in\mathcal{E}_h^0$; i.e., $\mathbf{u}\in \mathcal{V}_h$
and $\mathbf{u}_b=Q_b \textbf{g}$ on $\partial \Omega$. Furthermore,
we have that $\mathbf{u}_h$ and $p_h$ satisfy the equation
(\ref{WG1})-(\ref{WG2}), that is, $\mathbf{u}_h=\bar{\mathbf{u}}_h$
and $p_h=\bar p_h$.
\end{theorem}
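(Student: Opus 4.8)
The plan is to show that the first two components $(\mathbf{u}_h,p_h)$ of the HWG solution satisfy the WG equations (\ref{WG1})--(\ref{WG2}), and then invoke uniqueness of the WG solution (established in \cite{WY3}) to conclude $\mathbf{u}_h=\bar{\mathbf{u}}_h$ and $p_h=\bar p_h$. The boundary identity $\mathbf{u}_b=Q_b\mathbf{g}$ on $\partial\Omega$ is already imposed in Algorithm \ref{HWGA}, so the genuinely new facts to extract are the interior continuity $\mathbf{u}_h\in\mathcal{V}_h$ and the two variational identities.

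The crucial step is to read off continuity from (\ref{HWG2}). Setting $q=0$ there gives $c(\mathbf{u}_h,\boldsymbol{\mu})=0$ for every $\boldsymbol{\mu}\in\Xi_h$. I would expand $c(\mathbf{u}_h,\boldsymbol{\mu})=\sum_T\langle\mathbf{u}_b,\boldsymbol{\mu}\rangle_{\partial T}$ as a sum over edges. On an interior edge $e=\partial T_1\cap\partial T_2$ the similarity-zero constraint defining $\Xi_h$ forces $\boldsymbol{\mu}|_{\partial T_2}=-\boldsymbol{\mu}|_{\partial T_1}$, so the two element contributions collapse to $\langle[\![\mathbf{u}_b]\!]_e,\boldsymbol{\mu}|_{\partial T_1}\rangle_e$, while boundary edges contribute nothing since $\boldsymbol{\mu}$ vanishes there. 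Because $\mathbf{u}_b|_e\in[P_{k-1}(e)]^d$, the jump $[\![\mathbf{u}_b]\!]_e$ lies in the same polynomial space as the admissible values of $\boldsymbol{\mu}|_{\partial T_1}$; choosing $\boldsymbol{\mu}$ with $\boldsymbol{\mu}|_{\partial T_1}=[\![\mathbf{u}_b]\!]_e$ on each interior edge (and its negative on the opposite side) yields $\sum_{e\in\mathcal{E}_h^0}\|[\![\mathbf{u}_b]\!]_e\|_e^2=0$, hence $\mathbf{u}_h\in\mathcal{V}_h$.

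With continuity in hand the remaining identities follow quickly. Taking $\boldsymbol{\mu}=0$ in (\ref{HWG2}) gives $b(\mathbf{u}_h,q)=0$ for all $q\in W_h$, which is exactly (\ref{WG2}). For (\ref{WG1}) I would restrict the test space in (\ref{HWG1}) from $V_h^0$ to the subspace $\mathcal{V}_h^0\subset V_h^0$ and show that the multiplier term drops out: for $\mathbf{v}\in\mathcal{V}_h^0$ the trace $\mathbf{v}_b$ is single-valued across each interior edge, so on such an edge the contributions from $T_1$ and $T_2$ combine into $\langle\mathbf{v}_b,\boldsymbol{\lambda}_h|_{\partial T_1}+\boldsymbol{\lambda}_h|_{\partial T_2}\rangle_e=\langle\mathbf{v}_b,\langle\!\langle\boldsymbol{\lambda}_h\rangle\!\rangle_e\rangle_e=0$ by the similarity-zero property of $\boldsymbol{\lambda}_h\in\Xi_h$, while $\mathbf{v}_b=0$ kills all boundary contributions. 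Thus $c(\mathbf{v},\boldsymbol{\lambda}_h)=0$ on $\mathcal{V}_h^0$ and (\ref{HWG1}) reduces to (\ref{WG1}).

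The one point demanding care — and where sign and orientation conventions must be tracked precisely — is the bookkeeping that converts the element-wise sums in $c$ into edge-wise sums and correctly matches the definitions of the jump $[\![\cdot]\!]_e$ and the similarity $\langle\!\langle\cdot\rangle\!\rangle_e$. Once that is handled consistently, both reductions are immediate: $\mathbf{u}_h$ is a legitimate element of the WG velocity space satisfying (\ref{WG1})--(\ref{WG2}) with the correct boundary data, and the equivalence $\mathbf{u}_h=\bar{\mathbf{u}}_h$, $p_h=\bar p_h$ follows from uniqueness of the WG solution.
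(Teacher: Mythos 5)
Your proposal is correct and follows essentially the same route as the paper's proof: test (\ref{HWG2}) with $q=0$ and a multiplier built from the jumps to get $[\![\mathbf{u}_h]\!]_e=0$, take $\boldsymbol{\mu}=0$ to recover (\ref{WG2}), restrict (\ref{HWG1}) to $\mathcal{V}_h^0$ where $c(\mathbf{v},\boldsymbol{\lambda}_h)$ vanishes by the similarity-zero property, and conclude by uniqueness of the WG solution. The only cosmetic differences are that you choose the jump test function on all interior edges at once rather than edge-by-edge, and you pair $\mathbf{v}_b$ with $\langle\!\langle\boldsymbol{\lambda}_h\rangle\!\rangle_e$ instead of pairing $[\![\mathbf{v}]\!]_e$ with one trace of $\boldsymbol{\lambda}_h$; both are equivalent bookkeeping.
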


\begin{proof}
Let $e\in \mathcal{E}_h^0$ be an interior edge shared by $T_1$ and
$T_2$. By letting $q=0$, $\boldsymbol{\mu}=[\![\mathbf{u}_h]\!]_e$
on $e\in\partial T_1$, $\boldsymbol{\mu}=-[\![\mathbf{u}_h]\!]_e$
for $e\in
\partial T_2$, and $\boldsymbol{\mu}=0$ otherwise in (\ref{HWG2}), we have from
(\ref{bilinear_c}) that
$$
0=c(\mathbf{u}_h, \boldsymbol{\mu})=\sum_{T\in\mathcal{T}_h}\langle
\mathbf{u}_h, \boldsymbol{\mu}\rangle_{\partial T}=\int_e
[\![\mathbf{u}_h]\!]_e^2 ds,
$$
which implies $[\![\mathbf{u}_h]\!]_e=0$ for any $e\in
\mathcal{E}_h^0$.

Next by letting $\boldsymbol{\mu}=0$, we obtain from
(\ref{bilinear_c}) that
$$
b(\mathbf{u}_h, q)= 0
$$
for all $q\in W_h$.

For any $\mathbf{v}\in V_h^0$, it follows from
$[\![\mathbf{v}]\!]_e=0$ on any $e\in \mathcal{E}_h^0$ and
$\mathbf{v}=0$ on $\partial \Omega$ that
$$
c(\mathbf{v},\boldsymbol{\lambda}_h)=
\sum_{T\in\mathcal{T}_h}\langle \mathbf{v}_b,
\boldsymbol{\lambda}_h\rangle_{\partial T} =
\sum_{T\in\mathcal{T}_h}\langle [\![\mathbf{v}]\!],
\boldsymbol{\lambda}\rangle_{e} =0.
$$
Thus, we arrive at
$$
a(\mathbf{u}_h, \mathbf{v})-b(\mathbf{v}, p_h)=(\mathbf{f},
\mathbf{v}_0),
$$
which is the same as (\ref{WG1}). It implies that
$(\mathbf{u}_h;p_h)$ is a solution of the WG scheme
(\ref{WG1})-(\ref{WG2}). It follows from the uniqueness of solution
of (\ref{WG1})-(\ref{WG2}) that $\mathbf{u}_h= \bar{\mathbf{u}}_h$
and $p_h= \bar{p}_h$, which completes the proof.
\end{proof}

\section{Stability Conditions for
HWG}\label{Section:Stability_Conditions}

It is easy to see that the following defines a norm in the finite
element space $\Xi_h$
\begin{eqnarray}\label{Xi-norm}
\|\boldsymbol{\lambda}\|_{\Xi_h}=\left(\sum_{e\in\mathcal{E}_h^0}h_e
\|\boldsymbol{\lambda}\|_e^2\right)^{\frac12}.
\end{eqnarray}
As to $V_h^0$, for any $\mathbf{v}\in V_h^0$, let
\begin{eqnarray}\label{V0-norm}
\|\mathbf{v}\|_{V_h^0}=\left({|\hspace{-.02in}|\hspace{-.02in}|}
\mathbf{v}{|\hspace{-.02in}|\hspace{-.02in}|}^2+\sum_{e\in\mathcal{E}_h^0}h^{-1}_e\|[\![\mathbf{v}]\!]_e\|_e^2\right)^{\frac12}.
\end{eqnarray}
We claim that $\|\cdot\|_{V_h^0}$ defines a norm in $V_h^0$. In
fact, if $\|\mathbf{v}\|_{V_h^0}=0$, then $[\![\mathbf{v}]\!]_e=0$
on each interior edge or flat face $e\in \mathcal{E}_h^0$, and hence
$\mathbf{v}\in \mathcal{V}_h^0$. Since
${|\hspace{-.02in}|\hspace{-.02in}|}\cdot{|\hspace{-.02in}|\hspace{-.02in}|}$
defines a norm in the vector space $\mathcal{V}_h^0$, then
$\mathbf{v}=0$. This verifies the positivity property of
$\|\cdot\|_{V_h^0}$. The other properties for a norm can be checked
trivially.

\begin{remark}
Similarly, for any $\boldsymbol{\phi}=(p;\boldsymbol{\lambda})\in
M_h$, we can define
\begin{eqnarray}
\|\boldsymbol{\phi}\|_{M_h}=\|p\|+\|\boldsymbol{\lambda}\|_{\Xi_h}.
\end{eqnarray}
\end{remark}

\begin{lemma}{\rm(\cite{WY14})}\label{Trace inequality}
~\emph{\rm (}Trace Inequality{\rm)} Let $\mathcal{T}_h$ be a
partition of the domain $\Omega$ into polygons in 2D or polyhedra in
3D. Assume that the partition $\mathcal{T}_h$ satisfies the
assumptions A1, A2, and A3 as specified in \cite{WY14}. Let $p>1$ be
any real number. Then, there exists a constant $C$ such that for any
$T\in \mathcal{T}_h$ and edge/face $e\in\partial T$, we have
\begin{eqnarray}\label{Trace inequality00}\|\theta\|^p_{L^p(e)}\leq
Ch_T^{-1}(\|\theta\|^p_{L^p(T)}+h^p_T\|\nabla\theta\|^p_{L^p(T)}),
\end{eqnarray}
where $\theta\in W^{1,p}(T)$ is any function.
\end{lemma}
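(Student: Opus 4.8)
The plan is to reduce the estimate to a single shape-regular simplex sitting inside $T$ and then use the divergence theorem, so that no trace theorem on a reference element and no explicit affine scaling are strictly needed. First I would invoke the shape-regularity assumptions A1, A2, A3 of \cite{WY14} to fix, for the given face $e\subset\partial T$, an interior point $\mathbf{x}_0$ (for instance the center of an inscribed ball) such that $T$ is star-shaped with respect to $\mathbf{x}_0$ and the distance $\delta_e:=\mathrm{dist}(\mathbf{x}_0,\mathrm{span}(e))$ satisfies $\delta_e\ge c_0 h_T$, with $c_0$ depending only on the shape-regularity parameters. Let $\widetilde{T}$ denote the cone (convex hull) generated by $e$ and $\mathbf{x}_0$; by star-shapedness $\widetilde{T}\subseteq T$, and every lateral face of $\widetilde{T}$ passes through $\mathbf{x}_0$.

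The key step is to apply the divergence theorem on $\widetilde{T}$ to the vector field $w\,\boldsymbol{\psi}$, where $\boldsymbol{\psi}(\mathbf{x})=\mathbf{x}-\mathbf{x}_0$ and $w=|\theta|^p$. Since $\boldsymbol{\psi}$ is tangential to each lateral face, only $e$ contributes to the boundary integral, and on $e$ the quantity $\boldsymbol{\psi}\cdot\textbf{n}\equiv\delta_e$ is constant. Using $\nabla\cdot\boldsymbol{\psi}=d$ and $|\boldsymbol{\psi}|\le h_T$ on $\widetilde{T}$, this gives
\[
\delta_e\int_e|\theta|^p\,ds=\int_{\widetilde{T}}\left(d\,|\theta|^p+\boldsymbol{\psi}\cdot\nabla|\theta|^p\right)dx\le d\int_{\widetilde{T}}|\theta|^p\,dx+p\,h_T\int_{\widetilde{T}}|\theta|^{p-1}|\nabla\theta|\,dx.
\]
Dividing by $\delta_e\ge c_0 h_T$ and then applying Young's inequality in the form $h_T|\theta|^{p-1}|\nabla\theta|\le\frac{p-1}{p}|\theta|^p+\frac{1}{p}h_T^p|\nabla\theta|^p$ to the last integral produces exactly the stated bound, after enlarging the domain of integration from $\widetilde{T}$ to $T$, which is legitimate since all integrands are nonnegative. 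The resulting constant depends only on $d$, $p$, and the shape-regularity constants.

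Two technical points would need care. Geometrically, the whole argument rests on producing an apex $\mathbf{x}_0$ with $\delta_e\gtrsim h_T$ together with the inclusion $\widetilde{T}\subseteq T$; this is precisely the content of assumptions A1--A3 in \cite{WY14}, and establishing it for an arbitrary polygon or polyhedron, rather than a simplex, is the main obstacle. Analytically, the chain-rule identity $\nabla|\theta|^p=p\,|\theta|^{p-1}\,\mathrm{sgn}(\theta)\,\nabla\theta$ and the divergence theorem are only immediate for smooth $\theta$; for general $\theta\in W^{1,p}(T)$ I would first prove the estimate for $\theta\in C^\infty(\overline{T})$ and then pass to the limit using the density of smooth functions in $W^{1,p}(T)$ and the continuity of the trace map, noting that $|\theta|^{p-1}|\nabla\theta|\in L^1(T)$ by H\"older's inequality so that every term remains finite.
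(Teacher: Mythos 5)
You should note first that the paper contains no proof of this lemma at all: it is imported verbatim, with a citation, from \cite{WY14}. So your argument is not competing with an in-paper proof; it supplies one that the authors delegate to a reference, and as such it is correct and essentially the standard self-contained argument for polytopal cells. Two remarks on the details. First, the geometric step you describe as ``the main obstacle'' --- producing an apex $\mathbf{x}_0$ with $\mathrm{dist}(\mathbf{x}_0,\mathrm{span}(e))\ge c_0h_T$ and the pyramid $\widetilde{T}$ over $e$ contained in $T$ --- is not something to be established for an arbitrary polytope: it is precisely what the shape-regularity assumption A2 of \cite{WY14} postulates (a pyramid with base $e$, apex inside $T$, and height proportional to $h_T$), so you may simply invoke it; your own phrasing is internally inconsistent on this point, since a hypothesis cannot simultaneously be the content of the assumptions and the main thing left to prove. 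Second, the analytic core is sound: for $p>1$ the map $t\mapsto|t|^p$ is $C^1$, so for $\theta\in C^\infty(\overline{T})$ the divergence theorem applied to $|\theta|^p\,\boldsymbol{\psi}$ with $\boldsymbol{\psi}=\mathbf{x}-\mathbf{x}_0$ on $\widetilde{T}$ is legitimate; the lateral faces contribute nothing because $\boldsymbol{\psi}$ is tangential there, $\boldsymbol{\psi}\cdot\mathbf{n}\equiv\delta_e$ on $e$ because $e$ is flat, and the combination of $\nabla\cdot\boldsymbol{\psi}=d$, $|\boldsymbol{\psi}|\le h_T$, Young's inequality with exponents $p/(p-1)$ and $p$, division by $\delta_e\ge c_0h_T$, and enlargement of $\widetilde{T}$ to $T$ gives (\ref{Trace inequality00}) with $C=C(d,p,c_0)$; the limiting argument via density of smooth functions in $W^{1,p}(T)$ and continuity of the trace map on the Lipschitz domain $T$ is also correctly flagged. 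Compared with the route behind the citation --- reduce to the pyramid of A2 and then appeal to a known trace inequality there, e.g. by scaling --- your divergence-theorem computation is more elementary and produces explicit constants depending only on $d$, $p$ and the shape-regularity parameters; that self-containedness is a genuine advantage on polytopal meshes, where no affine map onto a fixed reference element is available.
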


\begin{lemma}{\rm(\cite{WY14})}\label{Inverse Inequality}~{\rm (}Inverse Inequality{\rm )} Let
$\mathcal{T}_h$ be a partition of the domain $\Omega$ into polygons
or polyhedra. Assume that $\mathcal{T}_h$ satisfies all the
assumptions A1-A4 and $p\geq 1$ be any real number. Then, there
exists a constant $C(n)$ such that
\begin{eqnarray}\label{Inverse Inequality00}
\|\nabla\varphi\|_{T,p}\leq
C(n)h^{-1}_T\|\varphi\|_{T,p},\quad\forall T\in\mathcal{T}_h
\end{eqnarray}
for any piecewise polynomial $\varphi$ of degree $n$ on
$\mathcal{T}_h$.
\end{lemma}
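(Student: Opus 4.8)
The plan is to establish the estimate element by element and then exploit the finite dimensionality of the local polynomial space together with the shape-regularity hypotheses A1--A4. Since both $\|\nabla\varphi\|_{T,p}$ and $\|\varphi\|_{T,p}$ are computed locally on each $T$, it suffices to prove
$$
\|\nabla\varphi\|_{T,p}\leq C(n)h_T^{-1}\|\varphi\|_{T,p}
$$
for a single fixed element $T\in\mathcal{T}_h$ and an arbitrary polynomial $\varphi$ of degree at most $n$ on $T$; the global statement then follows by raising to the $p$-th power and summing over all $T\in\mathcal{T}_h$.

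First I would normalize the element by the dilation $\tilde x = h_T^{-1}(x-x_T)$, where $x_T$ is a distinguished point of $T$ (for instance its centroid), mapping $T$ onto $\tilde T = h_T^{-1}(T-x_T)$, which has unit diameter. Setting $\tilde\varphi(\tilde x)=\varphi(x_T+h_T\tilde x)$, the chain rule gives $\nabla_x\varphi = h_T^{-1}\nabla_{\tilde x}\tilde\varphi$, and the change of variables in the integrals produces the scalings
$$
\|\varphi\|^p_{T,p}=h_T^{d}\,\|\tilde\varphi\|^p_{\tilde T,p},\qquad
\|\nabla\varphi\|^p_{T,p}=h_T^{d-p}\,\|\nabla_{\tilde x}\tilde\varphi\|^p_{\tilde T,p}.
$$
Consequently the claimed inequality is equivalent to the scale-free estimate $\|\nabla_{\tilde x}\tilde\varphi\|_{\tilde T,p}\leq C(n)\|\tilde\varphi\|_{\tilde T,p}$ on $\tilde T$, with all explicit powers of $h_T$ cancelling down to the single factor $h_T^{-1}$. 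On the normalized element this is a statement about the finite-dimensional space of polynomials of degree at most $n$: the map $\tilde\varphi\mapsto\|\tilde\varphi\|_{\tilde T,p}$ is a norm whereas $\tilde\varphi\mapsto\|\nabla_{\tilde x}\tilde\varphi\|_{\tilde T,p}$ is a seminorm, and since any seminorm is dominated by any norm on a finite-dimensional space, such a constant exists for each fixed shape $\tilde T$ and depends only on $n$, $p$, $d$ and the geometry of $\tilde T$. Transporting this back through the scaling relations yields the desired bound.

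The one genuinely delicate point, and the main obstacle, is the uniformity of $C(n)$ across all elements of all meshes in the family: because the elements are general polygons or polyhedra rather than simplices, there is no single reference element to map to, and the shape-dependent constant could in principle blow up as the element shapes degenerate. This is precisely what assumptions A1--A4 are designed to exclude, and I would settle it in one of two equivalent ways. Either I invoke a compactness argument, observing that shape regularity forces the normalized elements $\tilde T$ to range over a precompact set of admissible shapes (each $\tilde T$ is star-shaped with respect to a ball whose radius is bounded below and is contained in a ball whose radius is bounded above), so that the shape-dependent constant is bounded uniformly over the family; or, as is standard for polytopal meshes under A1--A4, I subdivide each $T$ into a number of shape-regular simplices that is bounded independently of $h$ and whose diameters are comparable to $h_T$, apply the classical simplicial inverse inequality on each piece, and sum the $p$-th powers, using $h_{K_i}\sim h_T$ to absorb the local factors into $h_T^{-1}$. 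Either route delivers a constant $C(n)$ independent of $T$ and of $h$, which is the crux of the lemma; the scaling bookkeeping above is otherwise entirely routine.
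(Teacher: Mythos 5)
This lemma is stated in the paper with a citation to \cite{WY14} and is not proved there at all, so there is no in-paper proof to compare against; your proposal has to be judged as a standalone argument, and as such it is correct and is essentially the standard proof. The scaling step (dilation to a unit-diameter element, with the exponents $h_T^{d}$ and $h_T^{d-p}$ cancelling to leave exactly one factor $h_T^{-1}$) and the norm/seminorm equivalence on the finite-dimensional space $P_n$ are both right, and you correctly identify that the only real content of the lemma is the uniformity of the constant over general polytopal elements, which is what A1--A4 are for. Of your two ways of securing uniformity, the subdivision route is the cleaner one and is closest to what the cited reference actually does: in \cite{WY14} the inverse inequality is obtained by relating $T$ to a shape-regular simplex of comparable diameter (there, a circumscribed simplex $S(T)\supset T$, combined with a separately proved ``domain equivalence'' lemma stating that $\|\varphi\|_{S(T),p}\leq C\|\varphi\|_{T,p}$ for polynomials $\varphi$), and then invoking the classical simplicial inverse estimate; your version, subdividing $T$ into boundedly many shape-regular simplices $K_i\subset T$ with $h_{K_i}\sim h_T$ and summing $p$-th powers, achieves the same reduction without needing the extension of the norm comparison to a simplex containing $T$. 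The compactness alternative you sketch is also workable but is the more delicate of the two: one must argue that the norm-equivalence constant depends (upper semi-)continuously on the normalized shape in the Hausdorff topology, which is an extra step you would need to supply; as written it is a plausibility argument rather than a proof. Since you offer the subdivision argument as a complete alternative, the proposal as a whole stands.
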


\begin{lemma}\label{Boundedness}
~\emph{\rm (}Boundedness{\rm)} There exists a constant $C>0$ such
that
\begin{eqnarray}\label{Boundedness1}|a(\mathbf{w}, \mathbf{v})|&\le&
C\|\mathbf{w}\|_{V_h^0}\|\mathbf{v}\|_{V_h^0}, \quad \forall
\mathbf{w}, \mathbf{v}\in V_h^0,
\\\label{Boundedness3}
|b(\mathbf{v}, q)|&\le& C\|\mathbf{v}\|_{V_h^0}\|q\|, \quad\,\quad
\forall \mathbf{v}\in V_h^0, q\in W_h,
\\
\label{Boundedness2} |c(\mathbf{v},\boldsymbol{\lambda})|&\le& C
\|\mathbf{v}\|_{V_h^0}\|\boldsymbol{\lambda}\|_{\Xi_h},\quad \forall
\mathbf{v}\in V_h^0, \boldsymbol{\lambda}\in\Xi_h.
\end{eqnarray}
\end{lemma}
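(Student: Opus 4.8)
The three estimates are of quite different difficulty, so the plan is to dispose of (\ref{Boundedness1}) and (\ref{Boundedness2}) quickly and then concentrate on (\ref{Boundedness3}), which carries all the real work. Throughout I will use the discrete Cauchy--Schwarz inequality to pass from element- or edge-wise bounds to the global norms, and I will freely invoke the trace inequality (Lemma \ref{Trace inequality}) and the inverse inequality (Lemma \ref{Inverse Inequality}) on the finite-dimensional polynomial pieces.

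For (\ref{Boundedness1}), the point is that $a(\cdot,\cdot)$ is a symmetric positive semidefinite bilinear form on $V_h^0$ whose induced seminorm is exactly $a(\mathbf{v},\mathbf{v})^{1/2}={|\hspace{-.02in}|\hspace{-.02in}|}\mathbf{v}{|\hspace{-.02in}|\hspace{-.02in}|}$ from (\ref{tripbarnorm}). Hence the Cauchy--Schwarz inequality for semi-inner products gives $|a(\mathbf{w},\mathbf{v})|\le {|\hspace{-.02in}|\hspace{-.02in}|}\mathbf{w}{|\hspace{-.02in}|\hspace{-.02in}|}\,{|\hspace{-.02in}|\hspace{-.02in}|}\mathbf{v}{|\hspace{-.02in}|\hspace{-.02in}|}$, and since by the definition (\ref{V0-norm}) the extra jump terms are nonnegative we have ${|\hspace{-.02in}|\hspace{-.02in}|}\mathbf{v}{|\hspace{-.02in}|\hspace{-.02in}|}\le \|\mathbf{v}\|_{V_h^0}$; combining these proves (\ref{Boundedness1}) with $C=1$.

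For (\ref{Boundedness2}), I would first rewrite the global form $c$ as a sum over interior edges. Regrouping the element-boundary integrals edge by edge and using that $\boldsymbol{\lambda}\in\Xi_h$ has vanishing similarity (\ref{Similarity}), so that $\boldsymbol{\lambda}|_{\partial T_1}=-\boldsymbol{\lambda}|_{\partial T_2}$ on an interior edge, while $\mathbf{v}\in V_h^0$ forces the boundary contributions to vanish, one finds $c(\mathbf{v},\boldsymbol{\lambda})=\sum_{e\in\mathcal{E}_h^0}\langle[\![\mathbf{v}]\!]_e,\boldsymbol{\lambda}\rangle_e$ with the jump (\ref{JumpFunction}). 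Splitting each summand as $h_e^{-1/2}\|[\![\mathbf{v}]\!]_e\|_e\cdot h_e^{1/2}\|\boldsymbol{\lambda}\|_e$, applying Cauchy--Schwarz on each edge and then discretely over all edges, yields exactly $\|\mathbf{v}\|_{V_h^0}\|\boldsymbol{\lambda}\|_{\Xi_h}$ by (\ref{V0-norm}) and (\ref{Xi-norm}).

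The heart of the lemma is (\ref{Boundedness3}). Writing $b(\mathbf{v},q)=\sum_T(\nabla_w\cdot\mathbf{v},q)_T$ and applying Cauchy--Schwarz, it suffices to show $\sum_T\|\nabla_w\cdot\mathbf{v}\|_T^2\le C\,a(\mathbf{v},\mathbf{v})\le C\|\mathbf{v}\|_{V_h^0}^2$. The plan is in two steps. First I would prove the auxiliary bound $\|\nabla\mathbf{v}_0\|_T\le C(\|\nabla_w\mathbf{v}\|_T+h_T^{-1/2}\|Q_b\mathbf{v}_0-\mathbf{v}_b\|_{\partial T})$ by setting $q=\nabla\mathbf{v}_0$ in (\ref{Discrete_wGradient-useful}); the crucial observation is that $\nabla\mathbf{v}_0\cdot\mathbf{n}|_e\in[P_{k-1}(e)]^d$, so the $L^2(e)$-orthogonality of $Q_b$ lets me replace $\mathbf{v}_b-\mathbf{v}_0$ by $\mathbf{v}_b-Q_b\mathbf{v}_0$ in the boundary term, turning the raw difference into the stabilizer quantity. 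Controlling $\|\nabla\mathbf{v}_0\cdot\mathbf{n}\|_{\partial T}$ by $Ch_T^{-1/2}\|\nabla\mathbf{v}_0\|_T$ via Lemmas \ref{Trace inequality} and \ref{Inverse Inequality} and dividing out one factor of $\|\nabla\mathbf{v}_0\|_T$ gives the claim. Second, I would take $\varphi=\nabla_w\cdot\mathbf{v}$ in (\ref{Discrete_wDivergence-useful}); again the normal trace $(\nabla_w\cdot\mathbf{v})\mathbf{n}|_e\in[P_{k-1}(e)]^d$ permits the same replacement of $\mathbf{v}_0$ by $Q_b\mathbf{v}_0$, and after the trace/inverse estimates I obtain $\|\nabla_w\cdot\mathbf{v}\|_T\le C(\|\nabla\mathbf{v}_0\|_T+h_T^{-1/2}\|Q_b\mathbf{v}_0-\mathbf{v}_b\|_{\partial T})$; inserting the first step bounds this by $C(\|\nabla_w\mathbf{v}\|_T+h_T^{-1/2}\|Q_b\mathbf{v}_0-\mathbf{v}_b\|_{\partial T})$, and summing over $T$ recovers $C\,a(\mathbf{v},\mathbf{v})$. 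The main obstacle is precisely this projection-orthogonality bookkeeping: without replacing $\mathbf{v}_0$ by $Q_b\mathbf{v}_0$ on element boundaries, the residual $\|\mathbf{v}_0-Q_b\mathbf{v}_0\|_{\partial T}$ is not controlled by the triple-bar norm, so recognizing that every boundary test function entering these identities lies in $[P_{k-1}(e)]^d$ is the step that makes the estimate close.
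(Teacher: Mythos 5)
Your proposal is correct, and for the two easy estimates it coincides with the paper: (\ref{Boundedness1}) is Cauchy--Schwarz for the semi-inner product $a(\cdot,\cdot)$ (the paper does it term by term, you do it in one stroke with $C=1$), and (\ref{Boundedness2}) is exactly the paper's argument of regrouping $\sum_T\langle\mathbf{v}_b,\boldsymbol{\lambda}\rangle_{\partial T}$ into edge sums of $\langle[\![\mathbf{v}]\!]_e,\boldsymbol{\lambda}\rangle_e$ via the zero-similarity property, followed by a weighted Cauchy--Schwarz. Where you genuinely diverge is (\ref{Boundedness3}). The paper unfolds $b(\mathbf{v},q)$ through the definition of the weak divergence and integration by parts, producing $\sum_T(\nabla\cdot\mathbf{v}_0,q)_T-\langle\mathbf{v}_0-\mathbf{v}_b,q\mathbf{n}\rangle_{\partial T}$, and then applies the trace and inverse inequalities to the \emph{pressure} $q$; it then asserts the bound $C\|\mathbf{v}\|_{V_h^0}\|q\|$, silently using both the $Q_b$-orthogonality (to trade $\mathbf{v}_0-\mathbf{v}_b$ for $Q_b\mathbf{v}_0-\mathbf{v}_b$, since $q\mathbf{n}|_e\in[P_{k-1}(e)]^d$) and the auxiliary estimate $\|\nabla\mathbf{v}_0\|_T\le C(\|\nabla_w\mathbf{v}\|_T+h_T^{-1/2}\|Q_b\mathbf{v}_0-\mathbf{v}_b\|_{\partial T})$ inherited from the WG literature. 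You instead keep all the trace/inverse machinery on the \emph{velocity} side: you prove the elementwise bound
\begin{equation*}
\|\nabla_w\cdot\mathbf{v}\|_T\le C\left(\|\nabla_w\mathbf{v}\|_T+h_T^{-1/2}\|Q_b\mathbf{v}_0-\mathbf{v}_b\|_{\partial T}\right)
\end{equation*}
by testing (\ref{Discrete_wGradient-useful}) with $\nabla\mathbf{v}_0$ and (\ref{Discrete_wDivergence-useful}) with $\nabla_w\cdot\mathbf{v}$, and then finish with plain Cauchy--Schwarz against $\|q\|$. Your route is organizationally cleaner in that the boundedness of $b$ reduces to a norm-comparison statement about $\mathbf{v}$ alone, and it makes explicit precisely the projection-orthogonality steps that the paper's compressed computation leaves implicit (and without which the paper's final inequality would not close, since the $V_h^0$-norm controls $\|Q_b\mathbf{v}_0-\mathbf{v}_b\|_{\partial T}$, not $\|\mathbf{v}_0-\mathbf{v}_b\|_{\partial T}$). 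Both arguments use the same tools and yield the same order of constants; yours is the more self-contained write-up.
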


\begin{proof}
To derive (\ref{Boundedness1}), we use the Cauchy-Schwarz inequality
to obtain
\begin{equation*}
\begin{aligned}
|a(\mathbf{w},\mathbf{v})|=&\left|\sum_{T\in\mathcal{T}_h}(\nabla_w
\mathbf{w}, \nabla_w \mathbf{v})_T+h_T^{-1}\langle Q_b
\mathbf{w}_0-\mathbf{w}_b, Q_b
\mathbf{v}_0-\mathbf{v}_b\rangle_{\partial T}\right|
\\
\leq &\left(\sum_{T\in\mathcal{T}_h}\|\nabla_w
\mathbf{w}\|^2_T\right)^{\frac12}\left(\sum_{T\in\mathcal{T}_h}\|\nabla_w
\mathbf{v}\|^2_T\right)^{\frac12}
\\
&+\left(\sum_{T\in\mathcal{T}_h}h_T^{-1}\|Q_b
\mathbf{w}_0-\mathbf{w}_b\|^2_{\partial
T}\right)^{\frac12}\left(\sum_{T\in\mathcal{T}_h}h_T^{-1}\| Q_b
\mathbf{v}_0-\mathbf{v}_b\|^2_{\partial T}\right)^{\frac12}
\\
\leq &C\|\mathbf{w}\|_{V_h^0}\|\mathbf{v}\|_{V_h^0}.
\end{aligned}
\end{equation*}

 As to (\ref{Boundedness3}), we use
(\ref{wDivergence}), trace inequality (\ref{Trace inequality00}),
and inverse inequality (\ref{Inverse Inequality00}) to obtain
\begin{eqnarray*}
|b(\mathbf{v},q)|&=&\left|\sum_{T\in\mathcal{T}_h}(\nabla_w\cdot \mathbf{v}, q)_T\right|\\
&=& -\sum_{T\in\mathcal{T}_h} \left(\mathbf{v}_0,\nabla p\right)_T+\sum_{T\in\mathcal{T}_h} \langle \mathbf{v}_b, p\mathbf{n} \rangle_{\partial T}\\
&=& \sum_{T\in\mathcal{T}_h}
\left(\nabla\cdot\mathbf{v}_0,p\right)_T
 - \sum_{T\in\mathcal{T}_h} \langle \mathbf{v}_0-\mathbf{v}_b, p\mathbf{n} \rangle_{\partial T}\\
&\le& \left(\sum_{T\in\mathcal{T}_h} \|\nabla \cdot \mathbf{v}_0\|_T\right)^{\frac12}\left(\sum_{T\in\mathcal{T}_h}\|p\|^2_T\right)^{\frac12}\\
&&+ \left(\sum_{T\in\mathcal{T}_h} \|\mathbf{v}_0-\mathbf{v}_b\|_{\partial T}\right)^{\frac12}\left(\sum_{T\in\mathcal{T}_h}\|p\|^2_{\partial T}\right)^{\frac12}\\
&\le& C\left(\sum_{T\in\mathcal{T}_h} \|\nabla\mathbf{v}_0\|_T\right)^{\frac12}\|p\|\\
&&+ C h^{-\frac12}\left(\sum_{T\in\mathcal{T}_h}
\|\mathbf{v}_0-\mathbf{v}_b\|_{\partial
T}\right)^{\frac12}\left(\|p\|
+h\|\nabla p\|\right)\\
&\le& C\|\mathbf{v}\|_{V_h^0}\|p\|.
\end{eqnarray*}

As to (\ref{Boundedness2}), it follows from the Cauchy-Schwarz
inequality that
\begin{equation*}
\begin{aligned}
|c(\mathbf{v},\boldsymbol{\lambda})|=&\left|\sum_{T\in\mathcal{T}_h}\langle\mathbf{v}_b,
\boldsymbol{\lambda}\rangle_{\partial
T}\right|=\left|\sum_{e\in\mathcal{E}_h^0} \langle
[\![\mathbf{v}]\!]_e, \boldsymbol{\lambda}\rangle_e\right|
\\
\leq &\left(\sum_{e\in\mathcal{E}_h^0}h_e^{-1}\|[\![\mathbf{v}]\!]_e
\|^2_{e}\right)^{\frac12}\left(\sum_{e\in\mathcal{E}_h^0}h_e\|
\boldsymbol{\lambda} \|^2_{e}\right)^{\frac12}
\\
\leq &C\|\mathbf{v}\|_{V_h^0}\|\boldsymbol{\lambda}\|_{\Xi_h},
\end{aligned}
\end{equation*}
which completes the proof.
\end{proof}

\begin{lemma}\label{Coercivity}
~\emph{\rm (Coercivity}{\rm)} For any $\mathbf{v}\in
\mathcal{V}_h^0$, we have
\begin{eqnarray}\label{Coercivity1}|a(\mathbf{v}, \mathbf{v})|&\ge&
C\|\mathbf{v}\|^2_{V_h^0}.
\end{eqnarray}
\end{lemma}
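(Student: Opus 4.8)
The plan is to recognize that on the subspace $\mathcal{V}_h^0$ the coercivity estimate is in fact an identity, so no sharp analysis is needed. First I would invoke the definition of $\mathcal{V}_h^0$: every $\mathbf{v}\in\mathcal{V}_h^0$ lies in $\mathcal{V}_h$ and therefore satisfies $[\![\mathbf{v}]\!]_e=0$ on each interior edge or flat face $e\in\mathcal{E}_h^0$. Substituting this into the definition \eqref{V0-norm} of the $V_h^0$-norm, the jump seminorm $\sum_{e\in\mathcal{E}_h^0}h_e^{-1}\|[\![\mathbf{v}]\!]_e\|_e^2$ vanishes identically, which leaves
$$\|\mathbf{v}\|_{V_h^0}^2={|\hspace{-.02in}|\hspace{-.02in}|}\mathbf{v}{|\hspace{-.02in}|\hspace{-.02in}|}^2,\qquad \mathbf{v}\in\mathcal{V}_h^0.$$

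Next I would compare this with the bilinear form $a(\cdot,\cdot)$. By \eqref{tripbarnorm} the triple-bar quantity is defined precisely as ${|\hspace{-.02in}|\hspace{-.02in}|}\mathbf{v}{|\hspace{-.02in}|\hspace{-.02in}|}^2=a(\mathbf{v},\mathbf{v})$, which coincides with the sum over $T$ of the weak-gradient energy $\|\nabla_w\mathbf{v}\|_T^2$ and the nonnegative stabilizer $h_T^{-1}\|Q_b\mathbf{v}_0-\mathbf{v}_b\|_{\partial T}^2$; in particular $a(\mathbf{v},\mathbf{v})\ge 0$. Combining the two observations yields $a(\mathbf{v},\mathbf{v})=\|\mathbf{v}\|_{V_h^0}^2$, so \eqref{Coercivity1} holds, indeed with constant $C=1$ and with equality.

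There is essentially no obstacle in this argument; the only point worth recording is that the right-hand side of \eqref{Coercivity1} is meaningful because ${|\hspace{-.02in}|\hspace{-.02in}|}\cdot{|\hspace{-.02in}|\hspace{-.02in}|}$ is a genuine norm on $\mathcal{V}_h^0$, a fact established in \cite{WY3} and recalled just after \eqref{tripbarnorm}. Should one later wish to prove a coercivity bound on the larger space $V_h^0$, where the jump term need not vanish, the argument would be considerably more delicate and would require controlling the jump contribution by the weak-gradient energy; but over $\mathcal{V}_h^0$, which is exactly the space on which the statement is posed, the estimate collapses to the identity above.
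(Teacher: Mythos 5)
Your proof is correct and follows essentially the same route as the paper's: both observe that the jump term in $\|\cdot\|_{V_h^0}$ vanishes on $\mathcal{V}_h^0$, so $\|\mathbf{v}\|_{V_h^0}^2$ reduces to ${|\hspace{-.02in}|\hspace{-.02in}|}\mathbf{v}{|\hspace{-.02in}|\hspace{-.02in}|}^2=a(\mathbf{v},\mathbf{v})$, giving the bound with $C=1$. Your write-up is in fact a bit more careful than the paper's one-line proof (which even omits the square on the triple-bar norm).
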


\begin{proof}
For any $\mathbf{v}\in \mathcal{V}_h^0$, we have
$\|\mathbf{v}\|^2_{V_h^0}={|\hspace{-.02in}|\hspace{-.02in}|}
\mathbf{v}{|\hspace{-.02in}|\hspace{-.02in}|}$, which means the
estimate (\ref{Coercivity1}) holds true. This completes the proof.
\end{proof}

\begin{lemma}{\rm(\cite{WY3})}\label{REF-inf-sup}
There exists a positive constant $\beta$ independent of $h$ such
that
\begin{eqnarray}\label{REF-inf-supCond}\sup_{\mathbf{v}\in \mathcal{V}_h^0} \frac{b(\mathbf{v}, \rho)}{{|\hspace{-.02in}|\hspace{-.02in}|}\mathbf{v}{|\hspace{-.02in}|\hspace{-.02in}|}}\ge \beta
\|\rho\|,
\end{eqnarray}
for all $\rho\in W_h$.
\end{lemma}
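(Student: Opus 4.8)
The plan is to prove this discrete inf--sup condition by a Fortin-type argument anchored on the classical continuous inf--sup (Ladyzhenskaya--Babu\v{s}ka--Brezzi) estimate for the Stokes problem. Fix $\rho\in W_h$. Since $W_h\subset L_0^2(\Omega)$, the continuous theory furnishes a vector field $\mathbf{w}\in[H_0^1(\Omega)]^d$ with $\nabla\cdot\mathbf{w}=\rho$ in $\Omega$ and $\|\mathbf{w}\|_1\le C_0\|\rho\|$, where $C_0$ depends only on $\Omega$. My candidate test function is the projection $\mathbf{v}=Q_h\mathbf{w}=\{Q_0\mathbf{w},Q_b\mathbf{w}\}$. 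Because $\mathbf{w}$ is single-valued across interior faces and vanishes on $\partial\Omega$, the face value $Q_b\mathbf{w}$ is single-valued with no jump and vanishes on $\partial\Omega$; hence $\mathbf{v}\in\mathcal{V}_h^0$ and is admissible in the supremum on the left of (\ref{REF-inf-supCond}).

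The decisive step is the commuting identity $b(Q_h\mathbf{w},\rho)=\|\rho\|^2$, which I establish element by element. Applying (\ref{Discrete_wDivergence}) with $\varphi=\rho|_T\in P_{k-1}(T)$, I use that $\nabla\rho\in[P_{k-2}(T)]^d\subset[P_k(T)]^d$ so that $Q_0$ reproduces $\mathbf{w}$ against it, i.e. $(Q_0\mathbf{w},\nabla\rho)_T=(\mathbf{w},\nabla\rho)_T$, and that $\rho\,\mathbf{n}|_e\in[P_{k-1}(e)]^d$ (the faces are flat, so $\mathbf{n}$ is constant on $e$) so that $Q_b$ reproduces $\mathbf{w}$, i.e. $\langle(Q_b\mathbf{w})\cdot\mathbf{n},\rho\rangle_{\partial T}=\langle\mathbf{w}\cdot\mathbf{n},\rho\rangle_{\partial T}$. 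The projections therefore drop out, and integration by parts gives $(\nabla_{w,k-1}\cdot(Q_h\mathbf{w}),\rho)_T=(\nabla\cdot\mathbf{w},\rho)_T$. Summing over $T\in\mathcal{T}_h$ yields $b(Q_h\mathbf{w},\rho)=(\nabla\cdot\mathbf{w},\rho)=(\rho,\rho)=\|\rho\|^2$.

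Next I bound the energy norm of the test function, $\vert\!\vert\!\vert Q_h\mathbf{w}\vert\!\vert\!\vert\le C\|\mathbf{w}\|_1$, where $\vert\!\vert\!\vert\cdot\vert\!\vert\!\vert$ is the norm of (\ref{tripbarnorm}). For the weak-gradient contribution I run the analogous reproduction argument on (\ref{Discrete_wGradient}): testing against $q\in[P_{k-1}(T)]^{d\times d}$ and using $\nabla\cdot q\in[P_{k-2}(T)]^d$ together with $q\cdot\mathbf{n}|_e\in[P_{k-1}(e)]^d$ shows that $\nabla_{w,k-1}(Q_h\mathbf{w})$ equals the $L^2$ projection of $\nabla\mathbf{w}$ onto $[P_{k-1}(T)]^{d\times d}$, whence $\|\nabla_w(Q_h\mathbf{w})\|_T\le\|\nabla\mathbf{w}\|_T$. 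For the stabilizer I write $Q_bQ_0\mathbf{w}-Q_b\mathbf{w}=Q_b(Q_0\mathbf{w}-\mathbf{w})$, so that $\|Q_bQ_0\mathbf{w}-Q_b\mathbf{w}\|_{\partial T}\le\|Q_0\mathbf{w}-\mathbf{w}\|_{\partial T}$, and then invoke the trace inequality (\ref{Trace inequality00}) with the standard $L^2$- and $H^1$-stability and approximation properties of $Q_0$ on shape-regular meshes to obtain $h_T^{-1}\|Q_b(Q_0\mathbf{w}-\mathbf{w})\|_{\partial T}^2\le C|\mathbf{w}|_{1,T}^2$. Adding the two contributions over all elements and recalling (\ref{tripbarnorm}) gives $\vert\!\vert\!\vert Q_h\mathbf{w}\vert\!\vert\!\vert^2\le C\|\mathbf{w}\|_1^2$.

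Combining the pieces, the particular choice $\mathbf{v}=Q_h\mathbf{w}$ gives $b(\mathbf{v},\rho)/\vert\!\vert\!\vert\mathbf{v}\vert\!\vert\!\vert=\|\rho\|^2/\vert\!\vert\!\vert Q_h\mathbf{w}\vert\!\vert\!\vert\ge\|\rho\|^2/(C\|\mathbf{w}\|_1)\ge\|\rho\|/(CC_0)$, and since the supremum in (\ref{REF-inf-supCond}) can only be larger, the inequality holds with $\beta=(CC_0)^{-1}$, which is independent of $h$. I expect the main obstacle to be the stability estimate $\vert\!\vert\!\vert Q_h\mathbf{w}\vert\!\vert\!\vert\le C\|\mathbf{w}\|_1$ with an $h$-independent constant: although it reduces to routine trace/inverse estimates and projection stability, controlling the stabilizer term uniformly forces one to track the $h_T^{-1}$ scaling against the approximation order of $Q_0$ and to use the shape-regularity hypotheses of Lemmas \ref{Trace inequality} and \ref{Inverse Inequality} with care. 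The continuous inf--sup input and the commuting identity, by contrast, are direct.
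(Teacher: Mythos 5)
Your proposal is correct, and it is essentially the canonical argument: the paper itself does not prove this lemma but imports it from \cite{WY3}, where the proof is exactly your Fortin-type construction — take $\mathbf{w}\in[H_0^1(\Omega)]^d$ with $\nabla\cdot\mathbf{w}=\rho$ and $\|\mathbf{w}\|_1\le C\|\rho\|$, test with $\mathbf{v}=Q_h\mathbf{w}\in\mathcal{V}_h^0$, use the commuting identities (restated in this paper as Lemma \ref{commutative_properties}, equations (\ref{CP1})--(\ref{CP2})) to get $b(Q_h\mathbf{w},\rho)=\|\rho\|^2$, and bound the triple-bar norm of $Q_h\mathbf{w}$ by $C\|\mathbf{w}\|_1$ via projection stability and the trace inequality. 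Your element-by-element rederivation of the commuting properties and your handling of the stabilizer term are both sound, so the argument goes through with $\beta$ independent of $h$.
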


\begin{lemma}\label{REF-inf-sup2}
For any given $\rho\in W_h$, there exist a positive constant $\beta$
independent of $h$ and a $\mathbf{v}\in V_h^0$ such that
\begin{eqnarray}\label{REF-inf-supCond2}\frac{b(\mathbf{v}, \rho)}{\|\mathbf{v}\|_{V_h^0}}\ge \beta
\|\rho\|.
\end{eqnarray}
\end{lemma}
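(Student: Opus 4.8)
The plan is to deduce this ``enriched'' inf--sup inequality directly from the inf--sup condition of Lemma \ref{REF-inf-sup}, which is already available on the conforming subspace $\mathcal{V}_h^0$. The crucial observation is that $\mathcal{V}_h^0\subset V_h^0$ and that the two norms $\|\cdot\|_{V_h^0}$ and ${|\hspace{-.02in}|\hspace{-.02in}|}\cdot{|\hspace{-.02in}|\hspace{-.02in}|}$ coincide on $\mathcal{V}_h^0$: indeed, by the very definition of $\mathcal{V}_h^0$ one has $[\![\mathbf{v}]\!]_e=0$ for every $e\in\mathcal{E}_h^0$, so the additional jump contribution in the definition (\ref{V0-norm}) of $\|\cdot\|_{V_h^0}$ vanishes and $\|\mathbf{v}\|_{V_h^0}={|\hspace{-.02in}|\hspace{-.02in}|}\mathbf{v}{|\hspace{-.02in}|\hspace{-.02in}|}$ for all $\mathbf{v}\in\mathcal{V}_h^0$.

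With this in hand the argument is short. First, for the given $\rho\in W_h$ I would apply Lemma \ref{REF-inf-sup}; since $\mathcal{V}_h^0$ is finite dimensional and the quotient $b(\mathbf{v},\rho)/{|\hspace{-.02in}|\hspace{-.02in}|}\mathbf{v}{|\hspace{-.02in}|\hspace{-.02in}|}$ is invariant under scaling of $\mathbf{v}$, its supremum over the unit sphere of $\mathcal{V}_h^0$ is attained, so there is a concrete $\mathbf{v}\in\mathcal{V}_h^0$ realizing $b(\mathbf{v},\rho)\ge \beta\|\rho\|\,{|\hspace{-.02in}|\hspace{-.02in}|}\mathbf{v}{|\hspace{-.02in}|\hspace{-.02in}|}$. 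Second, I would regard this $\mathbf{v}$ as an element of the larger space $V_h^0$ and use the norm identity from the previous paragraph to replace ${|\hspace{-.02in}|\hspace{-.02in}|}\mathbf{v}{|\hspace{-.02in}|\hspace{-.02in}|}$ by $\|\mathbf{v}\|_{V_h^0}$, which gives
$$b(\mathbf{v},\rho)\ge \beta\|\rho\|\,\|\mathbf{v}\|_{V_h^0},$$
and hence the desired quotient bound (\ref{REF-inf-supCond2}) with the same constant $\beta$.

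There is essentially no analytic obstacle here, since the bilinear form $b$ and the function $\rho$ are left untouched and no new test function needs to be constructed; the work is purely bookkeeping. The only points requiring care are to record that the same $\mathbf{v}$ is reused across the two spaces, to dispose of the trivial case $\rho=0$ separately (where any $\mathbf{v}$ works), and to confirm that the supremum in Lemma \ref{REF-inf-sup} is genuinely attained, so that the conclusion can be phrased as ``there exists $\mathbf{v}\in V_h^0$'' rather than merely as a supremum. Should one wish to sidestep the attainment argument, an equally valid route is to choose a near-maximizer and pass to the limit, but finite-dimensionality makes attainment the cleanest option.
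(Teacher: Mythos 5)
Your proposal is correct and follows essentially the same route as the paper: restrict to the conforming subspace $\mathcal{V}_h^0\subset V_h^0$, invoke Lemma \ref{REF-inf-sup}, and use the fact that $\|\mathbf{v}\|_{V_h^0}={|\hspace{-.02in}|\hspace{-.02in}|}\mathbf{v}{|\hspace{-.02in}|\hspace{-.02in}|}$ on $\mathcal{V}_h^0$ because the jump terms vanish. Your extra remark on attainment of the supremum via finite-dimensionality is a careful touch the paper leaves implicit, but it changes nothing of substance.
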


\begin{proof}
From $\mathcal{V}_h^0\subset V_h^0$ and Lemma \ref{REF-inf-sup}, we
have, for any $\rho\in W_h$, there exists a $\mathbf{v}\in
\mathcal{V}_h^0$
\begin{eqnarray}\label{IS-inequality0}
\frac{b(\mathbf{v}, \rho)}{\|\mathbf{v}\|_{V_h^0}}=
\frac{b(\mathbf{v},
\rho)}{{|\hspace{-.02in}|\hspace{-.02in}|}\mathbf{v}{|\hspace{-.02in}|\hspace{-.02in}|}}\ge
\beta \|\rho\|,
\end{eqnarray}
which completes the proof of the lemma.
\end{proof}

\begin{lemma}\label{inf-sup}
For any given $\boldsymbol{\tau}\in \Xi_h$, there exist a
$\mathbf{v}\in V_h^0$ with $\mathbf{v}_0=\b0$ and a constant $C>0$
such that
\begin{eqnarray}\label{inf-supCond} \frac{c(\mathbf{v},
\boldsymbol{\tau})}{\|\mathbf{v}\|_{V_h^0}}\ge
C\|\boldsymbol{\tau}\|_{\Xi_h}.
\end{eqnarray}
\end{lemma}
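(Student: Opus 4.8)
The plan is to establish the inf--sup condition by an \emph{explicit construction} of a test function, not by duality or compactness. Given $\boldsymbol{\tau}\in\Xi_h$, I would define $\mathbf{v}=\{\mathbf{v}_0,\mathbf{v}_b\}$ by setting $\mathbf{v}_0=\b0$ on every $T\in\mathcal{T}_h$ and $\mathbf{v}_b|_{\partial T}=h_T\,\boldsymbol{\tau}|_{\partial T}$. Since $\boldsymbol{\tau}|_e\in[P_{k-1}(e)]^d$ and the similarity-zero condition $\langle\!\langle\boldsymbol{\tau}\rangle\!\rangle_e=0$ forces $\boldsymbol{\tau}=\b0$ on every boundary face, this $\mathbf{v}$ lies in $V_h^0$ and automatically has $\mathbf{v}_0=\b0$, as the statement requires. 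The construction is designed so that the $h_T$-weight built into $\mathbf{v}_b$ will balance the $h$-powers appearing in the two norms.

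The next step is the lower bound for $c(\mathbf{v},\boldsymbol{\tau})$. On an interior face $e$ shared by $T_1,T_2$, the condition $\boldsymbol{\tau}|_{\partial T_1}=-\boldsymbol{\tau}|_{\partial T_2}$ makes both contributions $\langle\mathbf{v}_b,\boldsymbol{\tau}\rangle_{\partial T_1}$ and $\langle\mathbf{v}_b,\boldsymbol{\tau}\rangle_{\partial T_2}$ equal to $h_{T_i}\|\boldsymbol{\tau}\|_e^2$, both positive, so that
$$
c(\mathbf{v},\boldsymbol{\tau})=\sum_{T\in\mathcal{T}_h}h_T\|\boldsymbol{\tau}\|_{\partial T}^2=\sum_{e\in\mathcal{E}_h^0}(h_{T_1}+h_{T_2})\|\boldsymbol{\tau}\|_e^2\ge\sum_{e\in\mathcal{E}_h^0}h_e\|\boldsymbol{\tau}\|_e^2=\|\boldsymbol{\tau}\|_{\Xi_h}^2,
$$
where the inequality uses only $h_{T_i}\ge h_e$. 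No cancellation occurs precisely because $\boldsymbol{\tau}\in\Xi_h$ flips sign across faces while $\mathbf{v}_b$ is built with the same sign pattern.

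The third step is to bound $\|\mathbf{v}\|_{V_h^0}$ from above by $C\|\boldsymbol{\tau}\|_{\Xi_h}$, term by term. Because $\mathbf{v}_0=\b0$, the stabilizer collapses to $\sum_T h_T^{-1}\|\mathbf{v}_b\|_{\partial T}^2=\sum_T h_T\|\boldsymbol{\tau}\|_{\partial T}^2\le C\|\boldsymbol{\tau}\|_{\Xi_h}^2$ (using that $h_T$ is comparable to $h_e$ by shape regularity). For the weak-gradient part I would take $q=\nabla_w\mathbf{v}\in[P_{k-1}(T)]^{d\times d}$ in the defining identity (\ref{Discrete_wGradient}); with $\mathbf{v}_0=\b0$ this gives $\|\nabla_w\mathbf{v}\|_T^2=\langle\mathbf{v}_b,(\nabla_w\mathbf{v})\cdot\mathbf{n}\rangle_{\partial T}$, and then the trace inequality (\ref{Trace inequality00}) together with the inverse inequality (\ref{Inverse Inequality00}) yields $\|\nabla_w\mathbf{v}\|_{\partial T}\le C h_T^{-1/2}\|\nabla_w\mathbf{v}\|_T$, hence $\|\nabla_w\mathbf{v}\|_T\le C h_T^{-1/2}\|\mathbf{v}_b\|_{\partial T}=C h_T^{1/2}\|\boldsymbol{\tau}\|_{\partial T}$ and $\sum_T\|\nabla_w\mathbf{v}\|_T^2\le C\|\boldsymbol{\tau}\|_{\Xi_h}^2$. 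For the jump term, on each interior $e$ one has $[\![\mathbf{v}]\!]_e=(h_{T_1}+h_{T_2})\boldsymbol{\tau}|_e$, so $\sum_{e}h_e^{-1}\|[\![\mathbf{v}]\!]_e\|_e^2\le C\sum_{e}h_e\|\boldsymbol{\tau}\|_e^2=C\|\boldsymbol{\tau}\|_{\Xi_h}^2$. Combining the three gives $\|\mathbf{v}\|_{V_h^0}\le C\|\boldsymbol{\tau}\|_{\Xi_h}$.

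Dividing the lower bound of step two by this upper bound produces $c(\mathbf{v},\boldsymbol{\tau})/\|\mathbf{v}\|_{V_h^0}\ge C\|\boldsymbol{\tau}\|_{\Xi_h}$, which is the claim. The step I expect to demand the most care is the weak-gradient estimate: one must check that the $h^{-1/2}$ from the trace/inverse inequalities, combined with the $h_T$-scaling deliberately placed in $\mathbf{v}_b$, makes the $\nabla_w$-contribution scale \emph{exactly} like $\|\boldsymbol{\tau}\|_{\Xi_h}$, so that neither the gradient term nor the stabilizer/jump terms dominate. Getting this $h$-bookkeeping right, hand in hand with the shape-regularity comparison $c\,h_T\le h_e\le h_T$, is what forces the one-sided lower bound on $c$ and the upper bound on $\|\mathbf{v}\|_{V_h^0}$ to be of the same order and thus makes the quotient estimate work.
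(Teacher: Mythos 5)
Your proposal is correct and follows essentially the same route as the paper: the paper also takes $\mathbf{v}=\{\mathbf{0},h_e\boldsymbol{\tau}\}$ (you use the weight $h_T$ instead of $h_e$, which is immaterial under shape regularity), obtains the lower bound $c(\mathbf{v},\boldsymbol{\tau})\ge C\|\boldsymbol{\tau}\|_{\Xi_h}^2$ from the sign-compatibility of $\mathbf{v}_b$ with $\boldsymbol{\tau}$, and controls $\|\nabla_w\mathbf{v}\|_T$ by testing (\ref{Discrete_wGradient}) with $q=\nabla_w\mathbf{v}$ and invoking the trace and inverse inequalities. If anything, your treatment is slightly more complete, since you explicitly bound the jump contribution $\sum_e h_e^{-1}\|[\![\mathbf{v}]\!]_e\|_e^2$ in $\|\mathbf{v}\|_{V_h^0}$, a term the paper's proof passes over silently.
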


\begin{proof}
For any $\boldsymbol{\tau}\in\Xi_h$, we have
$\langle\!\langle\boldsymbol{\tau}\rangle\!\rangle_e=0$ or
equivalently $\boldsymbol{\tau}^1+\boldsymbol{\tau}^2=0$ on each
interior edge $e\in\mathcal{E}_h^0$ and $\boldsymbol{\tau}=0$ on all
boundary edges. By letting
$\mathbf{v}=\{\b0,h_e\boldsymbol{\tau}\}\in V_h^0$ in $c(\mathbf{v},
\boldsymbol{\tau})$ and $s(\mathbf{v},\mathbf{v})$, we obtain
\begin{equation}\label{RZhang-Feb10.001}
\begin{split}
c(\mathbf{v}, \boldsymbol{\tau})= &\sum_{e\in\mathcal{E}_h^0}\langle
\mathbf{v}_b^1, \boldsymbol{\tau}^1\rangle_e+\langle \mathbf{v}_b^2,
\boldsymbol{\tau}^2\rangle_e=
2\sum_{e\in\mathcal{E}_h^0}h_e\|\boldsymbol{\tau}\|_e^2,
\end{split}
\end{equation}
and
\begin{equation}\label{RZhang-Feb10.002}
\begin{split}
s(\mathbf{v}, \mathbf{v})=
&2\sum_{e\in\mathcal{E}_h^0}h_T^{-1}h_e^2(\|\boldsymbol{\tau}^1\|_e^2+\|\boldsymbol{\tau}^2\|_e^2)
\le 2\sum_{e\in\mathcal{E}_h^0}h_e\|\boldsymbol{\tau}\|_e^2.
\end{split}
\end{equation}
It follows from (\ref{Discrete_wGradient}), Cauchy-Schwarz
inequality, the trace inequality (\ref{Trace inequality00}), and the
inverse inequality (\ref{Inverse Inequality00}) that
\begin{equation}\label{RZhang-Feb10.003}
\begin{split}
(\nabla_w\mathbf{v}, \nabla_w\mathbf{v})_T= &\sum_{e\in\partial T}\langle\mathbf{v}^*_b, \nabla_w\mathbf{v}\rangle_e\le \sum_{e\in\partial T}h_e\|\boldsymbol{\tau}^*\|_e\|\nabla_w\mathbf{v}\|_e\\
\le& C\sum_{e\in\partial
T}h^{\frac12}_e\|\boldsymbol{\tau}^*\|_e\|\nabla_w\mathbf{v}\|_T,
\end{split}
\end{equation}
where $\mathbf{v}_b^*$ is chosen to be $\mathbf{v}_b^1$ or
$\mathbf{v}_b^2$ according to the relative position of
$\mathbf{v}_b$ and $e$, which implies that
\begin{equation}\label{RZhang-Feb10.004}
\|\nabla_w\mathbf{v}\|_T\le C\sum_{e\in\partial
T}h^{\frac12}_e\|\boldsymbol{\tau}^*\|_e.
\end{equation}
By summing over all elements, we obtain
\begin{equation}\label{RZhang-Feb10.005}
(\nabla_w\mathbf{v}, \nabla_w\mathbf{v})_h\le
C\sum_{e\in\mathcal{E}_h^0 }h_e\|\boldsymbol{\tau}^*\|^2_e.
\end{equation}
It follows from (\ref{RZhang-Feb10.002}) and
(\ref{RZhang-Feb10.005}) that
\begin{equation}\label{RZhang-Feb10.006}
{|\hspace{-.02in}|\hspace{-.02in}|}
\mathbf{v}{|\hspace{-.02in}|\hspace{-.02in}|}^2\le
C\sum_{e\in\mathcal{E}_h^0
}h_e\|\boldsymbol{\tau}^*\|^2_e=C\|\boldsymbol{\tau}\|_{\Xi_h}^2.
\end{equation}
By combining (\ref{RZhang-Feb10.001}) and (\ref{RZhang-Feb10.006}),
we obtain that there exists a constant $C>0$ such that
\begin{eqnarray}\label{RZhang-Feb10.007} \frac{c(\mathbf{v},
\boldsymbol{\tau})}{\|\mathbf{v}\|_{V_h^0}}\ge
C\|\boldsymbol{\tau}\|_{\Xi_h},
\end{eqnarray}
which completes the proof.
\end{proof}

\section{Error Estimates}

The goal of this section is to derive an error equation for the HWG
finite element solution obtained from (\ref{HWG1})-(\ref{HWG2}).
This error equation is critical in convergence analysis.

In addition to the projection $Q_h=\{Q_0, Q_b\}$ defined in the
previous section, let $\mathbb{Q}_h$ and $\mathbf{Q}_h$ be two local
$L^2$ projections onto $P_{k-1}(T)$ and $[P_{k-1}(T)]^{d\times d}$,
respectively.

\begin{lemma}{\rm(\cite{WY3})}\label{commutative_properties}
The projection operators $Q_h$, $\mathbf{Q}_h$, and $\mathbb{Q}_h$
satisfy the following commutative properties
\begin{eqnarray}\label{CP1}\nabla_w(Q_h\mathbf{v})&=&\mathbf{Q}_h(\nabla \mathbf{v}),
\quad \forall \mathbf{v}\in [H^1(\Omega)]^d,
\\
\label{CP2}\nabla_w\cdot (Q_h\mathbf{v})&=&\mathbb{Q}_h(\nabla
\cdot\mathbf{v}), \quad \forall \mathbf{v}\in H({\rm div},\Omega).
\end{eqnarray}
\end{lemma}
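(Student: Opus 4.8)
The plan is to prove the two commutativity identities separately but by a common template, since each one collapses to ordinary integration by parts once the defining orthogonality of the $L^2$ projections is invoked. I work element by element, and I read the subscriptless operators in the statement as the degree-$(k-1)$ discrete operators $\nabla_{w,k-1}$ and $\nabla_{w,k-1}\cdot$ actually used on $V_h$. For a fixed $T\in\mathcal{T}_h$, both $\nabla_w(Q_h\mathbf{v})$ and $\mathbf{Q}_h(\nabla\mathbf{v})$ lie in the finite-dimensional space $[P_{k-1}(T)]^{d\times d}$ (the weak gradient takes values there by (\ref{Discrete_wGradient})), so it suffices to show that they pair identically against every test matrix $q\in[P_{k-1}(T)]^{d\times d}$ under $(\cdot,\cdot)_T$; likewise, for the divergence identity both sides live in $P_{k-1}(T)$ and it suffices to test against every $\varphi\in P_{k-1}(T)$.

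For (\ref{CP1}), I start from the definition (\ref{Discrete_wGradient}) applied to $Q_h\mathbf{v}=\{Q_0\mathbf{v},Q_b\mathbf{v}\}$, namely $(\nabla_w(Q_h\mathbf{v}),q)_T=-(Q_0\mathbf{v},\nabla\cdot q)_T+\langle Q_b\mathbf{v},q\cdot\mathbf{n}\rangle_{\partial T}$. The key observation is a degree count: for $q\in[P_{k-1}(T)]^{d\times d}$ one has $\nabla\cdot q\in[P_{k-2}(T)]^d\subset[P_k(T)]^d$, so the defining property of the projection $Q_0$ onto $[P_k(T)]^d$ gives $(Q_0\mathbf{v},\nabla\cdot q)_T=(\mathbf{v},\nabla\cdot q)_T$; and on each flat face $e$ the unit normal $\mathbf{n}$ is constant, whence $q\cdot\mathbf{n}|_e\in[P_{k-1}(e)]^d$ lies in the range of $Q_b$ and $\langle Q_b\mathbf{v},q\cdot\mathbf{n}\rangle_e=\langle\mathbf{v},q\cdot\mathbf{n}\rangle_e$. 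Substituting, the right-hand side becomes exactly the integration-by-parts expansion of $(\nabla\mathbf{v},q)_T$, which is legitimate since $\mathbf{v}\in[H^1(\Omega)]^d$. Hence $(\nabla_w(Q_h\mathbf{v}),q)_T=(\nabla\mathbf{v},q)_T=(\mathbf{Q}_h(\nabla\mathbf{v}),q)_T$ for all $q$, the last equality being the definition of $\mathbf{Q}_h$. As both members lie in $[P_{k-1}(T)]^{d\times d}$, they coincide, and summing over $T$ yields (\ref{CP1}).

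For (\ref{CP2}) I repeat the argument with (\ref{Discrete_wDivergence}): for $\varphi\in P_{k-1}(T)$ one has $\nabla\varphi\in[P_{k-2}(T)]^d\subset[P_k(T)]^d$ and $\varphi\mathbf{n}|_e\in[P_{k-1}(e)]^d$, so $Q_0$ and $Q_b$ may again be removed; the right-hand side then equals the integration-by-parts expansion of $(\nabla\cdot\mathbf{v},\varphi)_T$, valid because $\mathbf{v}\in H(\mathrm{div},\Omega)$, and finally equals $(\mathbb{Q}_h(\nabla\cdot\mathbf{v}),\varphi)_T$ by the definition of $\mathbb{Q}_h$. Testing over all $\varphi\in P_{k-1}(T)$ and using that both sides lie in $P_{k-1}(T)$ gives (\ref{CP2}).

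The only genuinely delicate point is the degree/orthogonality bookkeeping above: one must verify that every quantity paired against $Q_0\mathbf{v}$ or $Q_b\mathbf{v}$ (the divergences, gradients, and normal traces of the test polynomials) actually falls into the range of the corresponding projection. This hinges on the exact matching between the interior velocity degree $k$, the boundary and pressure degree $k-1$, and the weak-operator order $k-1$, together with the flatness of the faces that renders $\mathbf{n}$ piecewise constant. No analytic estimate, trace bound, or compactness argument enters; the identities are purely algebraic once the definitions are unwound.
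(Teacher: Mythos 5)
The paper itself gives no proof of this lemma---it is imported verbatim from \cite{WY3}---so there is no in-house argument to compare against; judged on its own, your proof is correct and is essentially the standard one from the reference: unwind the definition (\ref{Discrete_wGradient}) (resp.\ (\ref{Discrete_wDivergence})), use the $L^2$-orthogonality of $Q_0$ and $Q_b$ to drop the projections, integrate by parts, and invoke the definitions of $\mathbf{Q}_h$ and $\mathbb{Q}_h$. Your degree bookkeeping is exactly the right crux: $\nabla\cdot q\in[P_{k-2}(T)]^d\subset[P_k(T)]^d$ so $Q_0$ can be removed, and $q\cdot\mathbf{n}|_e\in[P_{k-1}(e)]^d$ (flat faces, constant normal) so $Q_b$ can be removed, and likewise for $\nabla\varphi$ and $\varphi\mathbf{n}$ in the divergence identity. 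One caveat worth recording: for (\ref{CP2}) as stated, a function $\mathbf{v}\in H(\mathrm{div},\Omega)$ need not possess a full $[L^2(\partial T)]^d$ trace, so $Q_b\mathbf{v}$---and hence your step replacing $\langle Q_b\mathbf{v}\cdot\mathbf{n},\varphi\rangle_{\partial T}$ by $\langle \mathbf{v}\cdot\mathbf{n},\varphi\rangle_{\partial T}$---is only meaningful under slightly more regularity (e.g.\ $\mathbf{v}\in[H^1(\Omega)]^d$, which is all the error analysis ever uses); this looseness sits in the cited statement itself, not in your argument.
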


Denote by $(\mathbf{u}; p)$ the exact solution of
(\ref{Stokes})-(\ref{boundarycond}). Let $(\mathbf{u}_h; p_h;
\boldsymbol{\lambda}_h)\in V_h\times W_h\times \Xi_h$ be the
solutions of (\ref{HWG1})-(\ref{HWG2}). Let
$\boldsymbol{\lambda}=\nabla \mathbf{u}\cdot
\mathbf{n}-p\mathbf{n}$. Define error functions as follows
\begin{eqnarray}\label{errors-01}
\mathbf{e}_h=\{Q_0\mathbf{u}-\mathbf{u}_0, Q_b
\mathbf{u}-\mathbf{u}_b\}, \quad \varepsilon_h=\mathbb{Q}_h p-p_h,
\quad \boldsymbol{\delta}_h=Q_b
\boldsymbol{\lambda}-\boldsymbol{\lambda}_h.
\end{eqnarray}

\begin{lemma}\label{error-equation}
Let $(\mathbf{u}; p)$ be the exact solution of
(\ref{Stokes})-(\ref{boundarycond}), and $(\mathbf{u}_h; p_h;
\boldsymbol{\lambda}_h)\in V_h\times W_h\times \Xi_h$ be the
solutions of (\ref{HWG1})-(\ref{HWG2}). Then, the error functions
$\mathbf{e}_h$, $\varepsilon_h$, and $\delta_h$ satisfy the
following equations
\begin{eqnarray}\label{ee1}a(\mathbf{e}_h, \mathbf{v})-b(\mathbf{v}, \varepsilon_h)-c(\mathbf{v},
\boldsymbol{\delta}_h)&=&\ell_{\mathbf{u},p}(\mathbf{v}), \quad
\forall \mathbf{v}\in V_h^0,
\\
\label{ee2}b(\mathbf{e}_h, q)+c(\mathbf{e}_h,\boldsymbol{\mu})&=&0,
\qquad\ \ \forall q\in W_h, \boldsymbol{\mu}\in\Xi_h,
\end{eqnarray}
\end{lemma}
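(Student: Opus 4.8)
The plan is to exploit the definitions (\ref{errors-01}), writing $\mathbf{e}_h=Q_h\mathbf{u}-\mathbf{u}_h$, $\varepsilon_h=\mathbb{Q}_h p-p_h$ and $\boldsymbol{\delta}_h=Q_b\boldsymbol{\lambda}-\boldsymbol{\lambda}_h$, and to use linearity of the four bilinear forms together with the scheme (\ref{HWG1})--(\ref{HWG2}) to cancel the discrete unknowns. Subtracting (\ref{HWG1}) reduces (\ref{ee1}) to the identity
\[
a(\mathbf{e}_h,\mathbf{v})-b(\mathbf{v},\varepsilon_h)-c(\mathbf{v},\boldsymbol{\delta}_h)
=a(Q_h\mathbf{u},\mathbf{v})-b(\mathbf{v},\mathbb{Q}_h p)-c(\mathbf{v},Q_b\boldsymbol{\lambda})-(\mathbf{f},\mathbf{v}_0),
\]
so that $\ell_{\mathbf{u},p}(\mathbf{v})$ is exactly this right-hand side; similarly (\ref{ee2}) reduces to showing $b(Q_h\mathbf{u},q)+c(Q_h\mathbf{u},\boldsymbol{\mu})=0$. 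The whole proof therefore amounts to evaluating the globally-defined forms on the projected exact solution.

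I would dispatch (\ref{ee2}) first. By the commutative property (\ref{CP2}) and the incompressibility (\ref{divEquation}), $\nabla_w\cdot(Q_h\mathbf{u})=\mathbb{Q}_h(\nabla\cdot\mathbf{u})=0$, hence $b(Q_h\mathbf{u},q)=0$ for all $q\in W_h$. For the multiplier term, since the exact velocity is single-valued across interior faces, $Q_b\mathbf{u}$ is continuous there; grouping $c(Q_h\mathbf{u},\boldsymbol{\mu})=\sum_{T}\langle Q_b\mathbf{u},\boldsymbol{\mu}\rangle_{\partial T}$ edge by edge turns it into $\sum_{e}\langle Q_b\mathbf{u},\langle\!\langle\boldsymbol{\mu}\rangle\!\rangle_e\rangle_e$, which vanishes because $\boldsymbol{\mu}\in\Xi_h$ has zero similarity on every face and vanishes on $\partial\Omega$. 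This gives (\ref{ee2}).

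The substance lies in (\ref{ee1}). For the gradient part of $a(Q_h\mathbf{u},\mathbf{v})$, I would invoke (\ref{CP1}) to replace $\nabla_w Q_h\mathbf{u}$ by $\mathbf{Q}_h(\nabla\mathbf{u})$, then apply the identity (\ref{Discrete_wGradient-useful}) with test tensor $\mathbf{Q}_h(\nabla\mathbf{u})$ and an elementwise integration by parts; since $\nabla_w\mathbf{v}$ and $\nabla\mathbf{v}_0$ are polynomials of degree $k-1$, the projection $\mathbf{Q}_h$ may be dropped against them, producing
\[
(\nabla_w Q_h\mathbf{u},\nabla_w\mathbf{v})_T=-(\Delta\mathbf{u},\mathbf{v}_0)_T+\langle\nabla\mathbf{u}\cdot\mathbf{n},\mathbf{v}_0\rangle_{\partial T}+\langle\mathbf{v}_b-\mathbf{v}_0,(\mathbf{Q}_h\nabla\mathbf{u})\cdot\mathbf{n}\rangle_{\partial T}.
\]
Treating $b(\mathbf{v},\mathbb{Q}_h p)$ the same way via (\ref{Discrete_wDivergence-useful}) gives the contribution $-(\mathbf{v}_0,\nabla p)_T+\langle\mathbf{v}_0\cdot\mathbf{n},p\rangle_{\partial T}+\langle(\mathbf{v}_b-\mathbf{v}_0)\cdot\mathbf{n},\mathbb{Q}_h p\rangle_{\partial T}$, while $c(\mathbf{v},Q_b\boldsymbol{\lambda})=\sum_T\langle\mathbf{v}_b,\boldsymbol{\lambda}\rangle_{\partial T}$ because $\mathbf{v}_b$ is already of degree $k-1$ on each face. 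Summing, the volume terms combine through the momentum equation (\ref{Stokes}) into $(-\Delta\mathbf{u}+\nabla p-\mathbf{f},\mathbf{v}_0)=0$, and the face terms, after using $\boldsymbol{\lambda}=\nabla\mathbf{u}\cdot\mathbf{n}-p\mathbf{n}$, collapse to give
\[
\ell_{\mathbf{u},p}(\mathbf{v})=s(Q_h\mathbf{u},\mathbf{v})+\sum_{T\in\mathcal{T}_h}\langle\mathbf{v}_0-\mathbf{v}_b,(\nabla\mathbf{u}-\mathbf{Q}_h\nabla\mathbf{u})\cdot\mathbf{n}-(p-\mathbb{Q}_h p)\mathbf{n}\rangle_{\partial T}.
\]

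The work here is bookkeeping rather than conceptual, and the delicate step is the weak-gradient computation: one must insert or remove the projections $\mathbf{Q}_h$, $\mathbb{Q}_h$ only against polynomials of degree $\le k-1$, and must keep the \emph{exact} boundary traces $\nabla\mathbf{u}\cdot\mathbf{n}$, $p$ (arising from integration by parts) separate from the \emph{projected} traces $\mathbf{Q}_h\nabla\mathbf{u}$, $\mathbb{Q}_h p$ (arising from the useful identities), so that their difference assembles cleanly into the projection-error consistency term. I would also verify that no spurious contribution survives on $\partial\Omega$: there $\mathbf{v}_b=0$ for $\mathbf{v}\in V_h^0$, so the boundary-face integrals reduce consistently and the single functional $\ell_{\mathbf{u},p}$ displayed above is obtained for all $\mathbf{v}\in V_h^0$.
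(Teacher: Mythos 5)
Your proposal is correct and follows essentially the same route as the paper: the key identity for the projected exact solution, $a(\mathbf{v}, Q_h\mathbf{u})-b(\mathbf{v},\mathbb{Q}_h p)-c(\mathbf{v},\boldsymbol{\lambda})=(\mathbf{f},\mathbf{v}_0)+\ell_{\mathbf{u},p}(\mathbf{v})$, is obtained by the same combination of Lemma \ref{commutative_properties}, the identities (\ref{Discrete_wGradient-useful})--(\ref{Discrete_wDivergence-useful}), elementwise integration by parts, the momentum equation, and the projection property giving $c(\mathbf{v},\boldsymbol{\lambda})=c(\mathbf{v},Q_b\boldsymbol{\lambda})$, after which subtraction of the scheme (\ref{HWG1})--(\ref{HWG2}) yields (\ref{ee1})--(\ref{ee2}). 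The only cosmetic difference is in (\ref{ee2}), where the paper invokes Theorem \ref{theorem:wg-hwg} to conclude $c(\mathbf{e}_h,\boldsymbol{\mu})=0$ and then uses $b(\mathbf{e}_h,q)=b(Q_h\mathbf{u},q)=(\mathbb{Q}_h(\nabla\cdot\mathbf{u}),q)=0$, while you verify $b(Q_h\mathbf{u},q)+c(Q_h\mathbf{u},\boldsymbol{\mu})=0$ directly from (\ref{CP2}) and the zero-similarity constraint and subtract (\ref{HWG2}); both arguments are valid.
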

where
\begin{equation}\label{error term_ell}
\begin{split}
\ell_{\mathbf{u},p}(\mathbf{v}) = &\sum_{T\in\mathcal{T}_h}\langle
\mathbf{v}_0 - \mathbf{v}_b, (\nabla\mathbf{u} - \mathbf{Q}_h(\nabla
\mathbf{u}))\cdot \mathbf{n} \rangle_{\partial T}
\\ &-\sum_{T\in\mathcal{T}_h} \langle \mathbf{v}_0-\mathbf{v}_b , (p-\mathbb{Q}_h p) \mathbf{n} \rangle_{\partial T}
+s(Q_h\mathbf{u},\mathbf{v}).
\end{split}
\end{equation}

\begin{proof}
First, applying (\ref{wGradient}), Lemma
\ref{commutative_properties}, and the integration by parts, we have
\begin{eqnarray}
\nonumber(\nabla_w(Q_h \mathbf{u}),\nabla_w \mathbf{v})_T &=& (\mathbf{Q}_h(\nabla \mathbf{u}),\nabla_w \mathbf{v})_T\\
\nonumber&=& -(\mathbf{v}_0,\nabla\cdot\mathbf{Q}_h(\nabla
\mathbf{u}))_T+\langle \mathbf{v}_b, \mathbf{Q}_h(\nabla \mathbf{u})
\cdot \mathbf{n}\rangle_{\partial T}\\
\nonumber&=& (\nabla \mathbf{v}_0,\mathbf{Q}_h(\nabla
\mathbf{u}))_T-\langle \mathbf{v}_0-\mathbf{v}_b,
\mathbf{Q}_h(\nabla \mathbf{u}) \cdot \mathbf{n} \rangle_{\partial T}\\
\nonumber&=& (\nabla \mathbf{v}_0,\nabla \mathbf{u})_T-\langle
\mathbf{v}_0-\mathbf{v}_b,
\mathbf{Q}_h(\nabla \mathbf{u}) \cdot \mathbf{n} \rangle_{\partial T}\\
\nonumber&=& -(\Delta \mathbf{u}, \mathbf{v}_0)_T+\langle
\mathbf{v}_0 -
\mathbf{v}_b, (\nabla\mathbf{u} - \mathbf{Q}_h(\nabla \mathbf{u}))\cdot \mathbf{n} \rangle_{\partial T}\\
\nonumber&&+\langle \mathbf{v}_b , \nabla \mathbf{u} \cdot
\mathbf{n} \rangle_{\partial T}.
\end{eqnarray}
Summing over all $T\in \mathcal{T}_h$ reaches
\begin{eqnarray}\label{error-eq1}
\nonumber-(\Delta \mathbf{u}, \mathbf{v}_0)&=&(\nabla_w(Q_h
\mathbf{u}),\nabla_w \mathbf{v})-\sum_{T\in\mathcal{T}_h}\langle
\mathbf{v}_0 -
\mathbf{v}_b, (\nabla\mathbf{u} - \mathbf{Q}_h(\nabla \mathbf{u}))\cdot \mathbf{n} \rangle_{\partial T}\\
&&-\sum_{T\in\mathcal{T}_h}\langle \mathbf{v}_b , \nabla \mathbf{u}
\cdot \mathbf{n} \rangle_{\partial T}.
\end{eqnarray}

Similarly, by using (\ref{wDivergence}) and the integration by
parts, we have
\begin{eqnarray}
\nonumber (\nabla_w \cdot \mathbf{v}, \mathbb{Q}_h p)_T&=&
-(\mathbf{v}_0, \nabla(\mathbb{Q}_h p))_T+\langle \mathbf{v}_b, (\mathbb{Q}_h p)\mathbf{n}\rangle_{\partial T}\\
\nonumber&=& (\nabla \cdot \mathbf{v}_0, \mathbb{Q}_h p)_T-\langle
\mathbf{v}_0-\mathbf{v}_b,
 (\mathbb{Q}_h p)\mathbf{n}\rangle_{\partial T}\\
\nonumber&=& (\nabla \cdot \mathbf{v}_0,  p)_T-\langle
\mathbf{v}_0-\mathbf{v}_b,
 (\mathbb{Q}_h p)\mathbf{n}\rangle_{\partial T}\\
\nonumber&=& -(\mathbf{v}_0, \nabla p)_T + \langle \mathbf{v}_0,
p\mathbf{n} \rangle_{\partial T}
-\langle \mathbf{v}_0-\mathbf{v}_b,(\mathbb{Q}_h p)\mathbf{n}\rangle_{\partial T}\\
\nonumber&=& -(\mathbf{v}_0, \nabla p)_T + \langle
\mathbf{v}_0-\mathbf{v}_b,(p-\mathbb{Q}_h
p)\mathbf{n}\rangle_{\partial T} + \langle \mathbf{v}_b, \nabla
\mathbf{u}\cdot \mathbf{n} \rangle_{\partial T},
\end{eqnarray}
Summing over all $T$ leads to
\begin{eqnarray}\label{error-eq2}
\nonumber( \nabla p, \mathbf{v}_0)&=&(\nabla_w \cdot \mathbf{v},
\mathbb{Q}_h p) +
\sum_{T\in\mathcal{T}_h}\langle \mathbf{v}_0-\mathbf{v}_b,(p-\mathbb{Q}_h p)\mathbf{n}\rangle_{\partial T}\\
&&+ \sum_{T\in\mathcal{T}_h}\langle \mathbf{v}_b, p \mathbf{n}
\rangle_{\partial T}.
\end{eqnarray}

By using the identity $-(\Delta \mathbf{u}, \mathbf{v}_0)+(\nabla p,
\mathbf{v}_0)=(\mathbf{f},\mathbf{v}_0)$ and noticing that
\begin{eqnarray*}
\sum_{T\in\mathcal{T}_h} \langle \mathbf{v}_b,
\nabla\mathbf{u}\cdot\mathbf{n}-p\mathbf{n}\rangle_{\partial
T}=c(\mathbf{v},\boldsymbol{\lambda}),
\end{eqnarray*}
we obtain
\begin{eqnarray}\label{error-eq4}
a(\mathbf{v}, \mathbf{Q}_h \mathbf{u}) - b(\mathbf{v}, \mathbb{Q}_h
p) -c(\mathbf{v}, \boldsymbol{\lambda}) = (\mathbf{f},
\mathbf{v}_0)+ \ell_{\mathbf{u},p}(\mathbf{v}).
\end{eqnarray}

Combining with the scheme (\ref{HWG1}) as follows
\begin{eqnarray*}
a(\mathbf{v}, \mathbf{u}_h) - b(\mathbf{v}, p_h) -c(\mathbf{v},
\boldsymbol{\lambda}_h) = (\mathbf{f}, \mathbf{v}_0),
\end{eqnarray*}
we obtain
\begin{eqnarray*}
a(\mathbf{e}_h, \mathbf{v})-b(\mathbf{v},
\varepsilon_h)-c(\mathbf{v},
\boldsymbol{\delta}_h)&=&\ell_{\mathbf{u},p}(\mathbf{v}).
\end{eqnarray*}

As to (\ref{ee2}), from Theorem \ref{theorem:wg-hwg} we know that
$[\![\mathbf{e}_h]\!]=0$, which leads to
\begin{eqnarray*}
c(\mathbf{e}_h, \boldsymbol{\mu})=0, \quad \forall
\boldsymbol{\mu}\in\Xi_h.
\end{eqnarray*}
Moreover, for any $q\in W_h$, we have
\begin{eqnarray*}
b(\mathbf{e}_h, q)&=& b(\mathbf{Q}_h \mathbf{u},q)\\
&=&\sum_{T\in\mathcal{T}_h} (\nabla_w \cdot (\mathbf{Q}_h \mathbf{u}),q)_T\\
&=&\sum_{T\in\mathcal{T}_h} (\mathbb{Q}_h (\nabla \cdot \mathbf{u}), q)_T\\
&=&(\nabla \cdot \mathbf{u}, q)=0.
\end{eqnarray*}
This completes the proof.
\end{proof}

Next we shall establish some error estimates for the hybridized WG
finite element solution $(\mathbf{u}_h; p_h; \boldsymbol{\lambda}_h
)$ arising from (\ref{HWG1})-(\ref{HWG2}). The error equations
(\ref{ee1})-(\ref{ee2}) imply
\begin{eqnarray*}a(Q_h \mathbf{u}-\mathbf{u}_h, \mathbf{v})-b(\mathbf{v}, \mathbb{Q}_h p-p_h)-c(\mathbf{v},
Q_b
\boldsymbol{\lambda}-\boldsymbol{\lambda}_h)&=&\ell_{\mathbf{u},p}(\mathbf{v}),
\quad \forall \mathbf{v}\in V_h^0,
\\
b(Q_h \mathbf{u}-\mathbf{u}_h, q)+c(Q_h
\mathbf{u}-\mathbf{u}_h,\boldsymbol{\mu})&=&0, \quad \quad \ \ \
\forall q\in W_h, \boldsymbol{\mu}\in\Xi_h,
\end{eqnarray*}
where $\ell_{\mathbf{u},p}(\mathbf{v})$ is given by (\ref{error
term_ell}). The above is a saddle point problem for which the
Brezzi's theorem \cite{B74} can be applied for an analysis on its
stability and solvability. Note that all the conditions of Brezzi¡¯s
theorem have been verified in Section
\ref{Section:Stability_Conditions} (see Lemmas \ref{Boundedness},
\ref{Coercivity}, and \ref{inf-sup}). The following error estimate
can be proved similarly with Theorem 7.1 of \cite{WY3}.

\begin{theorem}\label{e-estimate1}
Let $(\mathbf{u}; p)$ be the exact solution of
(\ref{Stokes})-(\ref{boundarycond}) and $(\mathbf{u}_h; p_h;
\boldsymbol{\lambda}_h)\in V_h\times W_h\times \Xi_h$ be the
solutions of (\ref{HWG1})-(\ref{HWG2}).  Then, there exists a
constant $C$ such that
\begin{equation}\label{eer1}
\|Q_h\mathbf{u}-\mathbf{u}_h\|_{V_h^0}+\|\mathbb{Q}_h p-p_h\|+\|Q_b
\boldsymbol{\lambda}-\boldsymbol{\lambda}_h\|_{\Xi_h}\le C h^k
(\|\mathbf{u}\|_{k+1}+\|p\|_k).
\end{equation}
\end{theorem}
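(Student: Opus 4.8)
The plan is to read the error system (\ref{ee1})--(\ref{ee2}) as a generalized saddle-point problem and apply the abstract Brezzi theory, thereby reducing the whole estimate to a bound on the residual functional $\ell_{\mathbf{u},p}$. Setting $M_h=W_h\times\Xi_h$ with the norm $\|(q;\boldsymbol{\mu})\|_{M_h}=\|q\|+\|\boldsymbol{\mu}\|_{\Xi_h}$ from the Remark, I combine $b$ and $c$ into one bilinear form $B(\mathbf{v};(q,\boldsymbol{\mu}))=b(\mathbf{v},q)+c(\mathbf{v},\boldsymbol{\mu})$ on $V_h^0\times M_h$, so the error equations become $a(\mathbf{e}_h,\mathbf{v})-B(\mathbf{v};(\varepsilon_h,\boldsymbol{\delta}_h))=\ell_{\mathbf{u},p}(\mathbf{v})$ and $B(\mathbf{e}_h;(q,\boldsymbol{\mu}))=0$. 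Boundedness of $a$ and $B$ is immediate from Lemma \ref{Boundedness}; coercivity of $a$ on $\ker B$ follows from Lemma \ref{Coercivity}, since $B(\mathbf{v};\cdot)=0$ forces $[\![\mathbf{v}]\!]_e=0$ (test with the same choice of $\boldsymbol{\mu}\in\Xi_h$ used in the proof of Theorem \ref{theorem:wg-hwg}), hence $\mathbf{v}\in\mathcal{V}_h^0$, where $\|\cdot\|_{V_h^0}$ reduces to ${|\hspace{-.02in}|\hspace{-.02in}|}\cdot{|\hspace{-.02in}|\hspace{-.02in}|}$.

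The remaining Brezzi hypothesis is a combined inf--sup condition for $B$, which I would assemble from the two separate ones already proved. Lemma \ref{inf-sup} supplies, for given $\boldsymbol{\tau}$, a test function $\mathbf{v}_2$ with $\mathbf{v}_{2,0}=\mathbf{0}$ realizing the $c$-inf--sup, while Lemma \ref{REF-inf-sup2} supplies, for given $\rho$, a test function $\mathbf{v}_1\in\mathcal{V}_h^0$ realizing the $b$-inf--sup. The crucial observation is that $c(\mathbf{v}_1,\boldsymbol{\tau})=0$ because $\mathbf{v}_1$ has no interior jumps, so the only coupling is the single cross term $b(\mathbf{v}_2,\rho)$. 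Taking $\mathbf{v}=\mathbf{v}_1+\theta\mathbf{v}_2$ with $\theta>0$ small and controlling $b(\mathbf{v}_2,\rho)$ by Young's inequality (using the boundedness of $b$) yields $B(\mathbf{v};(\rho,\boldsymbol{\tau}))\ge C\|(\rho,\boldsymbol{\tau})\|_{M_h}\|\mathbf{v}\|_{V_h^0}$, which is the desired condition.

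With all hypotheses verified, the Brezzi theory gives the stability bound
\begin{equation*}
\|\mathbf{e}_h\|_{V_h^0}+\|\varepsilon_h\|+\|\boldsymbol{\delta}_h\|_{\Xi_h}\le C\sup_{\mathbf{0}\ne\mathbf{v}\in V_h^0}\frac{|\ell_{\mathbf{u},p}(\mathbf{v})|}{\|\mathbf{v}\|_{V_h^0}},
\end{equation*}
so the theorem follows once I show $|\ell_{\mathbf{u},p}(\mathbf{v})|\le Ch^k(\|\mathbf{u}\|_{k+1}+\|p\|_k)\|\mathbf{v}\|_{V_h^0}$. I treat the three terms of (\ref{error term_ell}) separately. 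Each of the first two boundary sums is handled by Cauchy--Schwarz, peeling off a factor $\big(\sum_T h_T^{-1}\|\mathbf{v}_0-\mathbf{v}_b\|_{\partial T}^2\big)^{1/2}$ together with $\big(\sum_T h_T\|\nabla\mathbf{u}-\mathbf{Q}_h(\nabla\mathbf{u})\|_{\partial T}^2\big)^{1/2}$ (respectively with $p-\mathbb{Q}_h p$); the trace inequality (Lemma \ref{Trace inequality}) and the standard approximation estimates for the $L^2$ projections $\mathbf{Q}_h,\mathbb{Q}_h$ convert the second factors into $Ch^k\|\mathbf{u}\|_{k+1}$ and $Ch^k\|p\|_k$. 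The stabilizer term $s(Q_h\mathbf{u},\mathbf{v})=\sum_T h_T^{-1}\langle Q_b(Q_0\mathbf{u}-\mathbf{u}),Q_b\mathbf{v}_0-\mathbf{v}_b\rangle_{\partial T}$ is bounded by $\|\mathbf{v}\|_{V_h^0}$ times another $Ch^k\|\mathbf{u}\|_{k+1}$ from the same trace and approximation estimates.

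The main obstacle is the bookkeeping inside this last step, in particular the inequality $\sum_T h_T^{-1}\|\mathbf{v}_0-\mathbf{v}_b\|_{\partial T}^2\le C\|\mathbf{v}\|_{V_h^0}^2$, because the norm only directly controls $Q_b\mathbf{v}_0-\mathbf{v}_b$. To bridge the gap I would establish $\|\nabla\mathbf{v}_0\|_T\le C(\|\nabla_w\mathbf{v}\|_T+h_T^{-1/2}\|Q_b\mathbf{v}_0-\mathbf{v}_b\|_{\partial T})$ by testing the identity (\ref{Discrete_wGradient-useful}) with $q=\nabla\mathbf{v}_0$ and noting that $\nabla\mathbf{v}_0\cdot\mathbf{n}$ is a degree-$(k-1)$ polynomial on each flat face, so the $L^2$-orthogonality of $Q_b$ removes the otherwise uncontrolled contribution; a trace/Poincar\'e estimate then bounds $h_T^{-1}\|\mathbf{v}_0-Q_b\mathbf{v}_0\|_{\partial T}^2$ by $\|\nabla\mathbf{v}_0\|_T^2$. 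Once these local estimates are in place, the rest is a direct transcription of the argument behind Theorem 7.1 of \cite{WY3}.
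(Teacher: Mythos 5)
Your proposal is correct and follows essentially the same route as the paper: the paper likewise reads the error equations (\ref{ee1})--(\ref{ee2}) as a saddle-point system, invokes Brezzi's theorem on the strength of Lemmas \ref{Boundedness}, \ref{Coercivity}, and \ref{inf-sup}, and defers the residual bound for $\ell_{\mathbf{u},p}$ to the argument of Theorem 7.1 of \cite{WY3}. The details you supply beyond the paper's terse outline --- assembling the combined inf--sup for $b+c$ via the observation that $c(\mathbf{v}_1,\boldsymbol{\tau})=0$ for jump-free $\mathbf{v}_1$, and bridging $\sum_T h_T^{-1}\|\mathbf{v}_0-\mathbf{v}_b\|_{\partial T}^2\le C\|\mathbf{v}\|_{V_h^0}^2$ through the bound on $\|\nabla\mathbf{v}_0\|_T$ --- are exactly the ones the paper implicitly relies on, and they check out.
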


\begin{theorem}\label{e-estimate2}
Let $(\mathbf{u}; p)$ be the exact solution of
(\ref{Stokes})-(\ref{boundarycond}) and $\boldsymbol{\lambda}_h\in
\Xi_h$ be the last component of the solution of
(\ref{HWG1})-(\ref{HWG2}). On the set of interior edges
$\mathcal{E}_h^0$, let $\boldsymbol{\lambda}=\nabla \mathbf{u}\cdot
\mathbf{n} - p\mathbf{n}$.
 Then, there
exists a constant $C$ such that
\begin{equation}\label{eer2}
\|\boldsymbol{\lambda}-\boldsymbol{\lambda}_h\|_{\Xi_h}\le C h^k
(\|\mathbf{u}\|_{k+1}+\|p\|_k).
\end{equation}
\end{theorem}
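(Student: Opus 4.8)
The plan is to reduce Theorem \ref{e-estimate2} to the already-proven Theorem \ref{e-estimate1} by a triangle inequality that isolates the pure projection error from the discretization error. Recall that in the error analysis one sets $\boldsymbol{\delta}_h = Q_b\boldsymbol{\lambda}-\boldsymbol{\lambda}_h$, and Theorem \ref{e-estimate1} already bounds $\|Q_b\boldsymbol{\lambda}-\boldsymbol{\lambda}_h\|_{\Xi_h}$ by $Ch^k(\|\mathbf{u}\|_{k+1}+\|p\|_k)$. Hence I would first write
$$
\|\boldsymbol{\lambda}-\boldsymbol{\lambda}_h\|_{\Xi_h}\le \|\boldsymbol{\lambda}-Q_b\boldsymbol{\lambda}\|_{\Xi_h}+\|Q_b\boldsymbol{\lambda}-\boldsymbol{\lambda}_h\|_{\Xi_h},
$$
so that the only new quantity to control is the projection error $\|\boldsymbol{\lambda}-Q_b\boldsymbol{\lambda}\|_{\Xi_h}$.

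For that term, using $\|\cdot\|_{\Xi_h}^2 = \sum_{e\in\mathcal{E}_h^0}h_e\|\cdot\|_e^2$ and the fact that $Q_b$ is the $L^2$-projection onto $[P_{k-1}(e)]^d$, I would exploit its optimality in $L^2(e)$: on each interior edge $e\subset\partial T$ I may replace $Q_b\boldsymbol{\lambda}$ by any convenient element of $[P_{k-1}(e)]^d$. Since $\mathbf{Q}_h(\nabla\mathbf{u})\in[P_{k-1}(T)]^{d\times d}$ and $\mathbb{Q}_h p\in P_{k-1}(T)$, their traces are polynomials of degree $k-1$ on $e$, and $\mathbf{n}$ is constant on a flat face, the function $\boldsymbol{\phi}:=(\mathbf{Q}_h(\nabla\mathbf{u})\cdot\mathbf{n}-(\mathbb{Q}_h p)\mathbf{n})|_e$ lies in $[P_{k-1}(e)]^d$. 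Therefore
$$
\|\boldsymbol{\lambda}-Q_b\boldsymbol{\lambda}\|_e\le \|\boldsymbol{\lambda}-\boldsymbol{\phi}\|_e \le \|(\nabla\mathbf{u}-\mathbf{Q}_h(\nabla\mathbf{u}))\cdot\mathbf{n}\|_e+\|(p-\mathbb{Q}_h p)\mathbf{n}\|_e,
$$
which reduces everything to the boundary projection errors of $\nabla\mathbf{u}$ and of $p$.

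The final step is to estimate these boundary errors by combining the trace inequality of Lemma \ref{Trace inequality} (with $p=2$) with the standard $L^2$-projection approximation bound $\|w-\mathbf{Q}_h w\|_T+h_T|w-\mathbf{Q}_h w|_{1,T}\le Ch_T^k|w|_{k,T}$ applied to $w=\nabla\mathbf{u}$ (a degree-$(k-1)$ projection of an $H^k$ matrix field), and its scalar analogue for $p$ with $\mathbb{Q}_h$. On each $T$ this yields $\|\nabla\mathbf{u}-\mathbf{Q}_h(\nabla\mathbf{u})\|_{\partial T}^2\le Ch_T^{2k-1}|\mathbf{u}|_{k+1,T}^2$ and $\|p-\mathbb{Q}_h p\|_{\partial T}^2\le Ch_T^{2k-1}|p|_{k,T}^2$; multiplying by $h_e\sim h_T$ and summing over $\mathcal{E}_h^0$ gives $\|\boldsymbol{\lambda}-Q_b\boldsymbol{\lambda}\|_{\Xi_h}\le Ch^k(\|\mathbf{u}\|_{k+1}+\|p\|_k)$. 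Adding the Theorem \ref{e-estimate1} bound then closes the argument.

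I expect no deep obstacle here, since the estimate is essentially a post-processing of Theorem \ref{e-estimate1}; the only point requiring genuine care is that $\boldsymbol{\lambda}$ lives solely on the edges as a trace of $\nabla\mathbf{u}\cdot\mathbf{n}-p\mathbf{n}$, so one cannot apply an edgewise Bramble--Hilbert argument to $\boldsymbol{\lambda}$ directly without tracking its tangential regularity. The decisive (though routine) maneuver is therefore the comparison against the \emph{interior} projections $\mathbf{Q}_h$ and $\mathbb{Q}_h$, which lets the trace inequality transfer the $O(h^k)$ element approximation rate to the edges with the correct half-power of $h_e$; all constants remain $h$-independent by the shape-regularity assumptions underlying Lemma \ref{Trace inequality}.
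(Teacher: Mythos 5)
Your proposal is correct and follows essentially the same route as the paper's proof: a triangle inequality through $Q_b\boldsymbol{\lambda}$, invoking Theorem \ref{e-estimate1} for $\|Q_b\boldsymbol{\lambda}-\boldsymbol{\lambda}_h\|_{\Xi_h}$, and then controlling $\|\boldsymbol{\lambda}-Q_b\boldsymbol{\lambda}\|_{\Xi_h}$ via the best-approximation property of $Q_b$ against the traces of $\mathbf{Q}_h(\nabla\mathbf{u})$ and $\mathbb{Q}_h p$, followed by the trace inequality and standard projection estimates. Your write-up merely makes explicit (via the comparison function $\boldsymbol{\phi}$) a step the paper compresses into the phrase ``the property of $L^2$ projection.''
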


\begin{proof}
From the triangle inequality and Theorem \ref{e-estimate1}, we have
\begin{eqnarray*}
\|\boldsymbol{\lambda}-\boldsymbol{\lambda}_h\|_{\Xi_h}&\le&
\|\boldsymbol{\lambda}-Q_b \boldsymbol{\lambda}\|_{\Xi_h}
+\|Q_b \boldsymbol{\lambda} - \boldsymbol{\lambda}_h\|_{\Xi_h},\\
\|Q_b \boldsymbol{\lambda} - \boldsymbol{\lambda}_h\|_{\Xi_h}&\le&
Ch^k(\|u\|_{k+1}+\|p\|_k).
\end{eqnarray*}
Thus we just need to concentrate on $\|\boldsymbol{\lambda}-Q_b
\boldsymbol{\lambda}\|_{\Xi_h}$.

Applying Definition \ref{Xi-norm}, trace inequality,
 and the property of $L^2$ projection,
yields
\begin{eqnarray*}
\|\boldsymbol{\lambda}-Q_b \boldsymbol{\lambda}\|_{\Xi_h}^2&=&\sum_{e\in\mathcal{E}_h^0} h_e\|\boldsymbol{\lambda}-Q_b \boldsymbol{\lambda}\|_e^2\\
&\le& \sum_{e\in\mathcal{E}_h^0} h\|\nabla \mathbf{u}-\mathbf{Q}_h \nabla \mathbf{u}\|^2_e+\sum_{e\in\mathcal{E}_h^0} h\|p-\mathbb{Q}_h p\|^2_e\\
&\le& C\sum_{T\in\mathcal{T}_h} (\|\nabla \mathbf{u}-\mathbf{Q}_h \nabla \mathbf{u}\|^2_T+ h^2\|\nabla \mathbf{u}-\mathbf{Q}_h \nabla \mathbf{u}\|^2_{1,T})\\
&& + C\sum_{T\in\mathcal{T}_h} (\|p-\mathbb{Q}_h p\|^2_T+h^2 \|p-\mathbb{Q}_h p\|^2_{1,T})\\
&\le& Ch^{2k}(\|u\|_{k+1}+\|p\|_k)^2,
\end{eqnarray*}
which completes the proof.
\end{proof}

The following $L^2$-error estimate for $Q_0\mathbf{u}-\mathbf{u}_0$
follows from Theorem \ref{theorem:wg-hwg} and Theorem 7.2 of
\cite{WY3}.

\begin{theorem}{\rm(\cite{WY3})}\label{e-estimate3}
Let $(\mathbf{u}; p)$ with $k\ge 1$ and $(\mathbf{u}_h; p_h;
\boldsymbol{\lambda}_h)\in V_h\times W_h\times \Xi_h$ be the exact
solution of (\ref{Stokes})-(\ref{boundarycond}) and  be the
solutions of (\ref{HWG1})-(\ref{HWG2}), respectively.  Then, the
following optimal order error estimate holds true
\begin{equation}\label{eer4}
\|Q_0\mathbf{u}-\mathbf{u}_0\| \le C h^{k+1}
(\|\mathbf{u}\|_{k+1}+\|p\|_k).
\end{equation}
\end{theorem}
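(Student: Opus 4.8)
The plan is to prove the $L^2$-error estimate for $Q_0\mathbf{u}-\mathbf{u}_0$ via a duality (Aubin--Nitsche) argument, following Theorem 7.2 of \cite{WY3} and using Theorem \ref{theorem:wg-hwg} to transfer the bound from the WG solution to the HWG solution. Since Theorem \ref{theorem:wg-hwg} establishes that $\mathbf{u}_h=\bar{\mathbf{u}}_h$ and $p_h=\bar{p}_h$, the estimate \eqref{eer4} for the HWG solution is \emph{identical} to the corresponding estimate for the WG solution; thus the entire task reduces to reproducing the WG argument of \cite{WY3} and then invoking the equivalence.

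First I would introduce the dual (auxiliary) Stokes problem: find $(\boldsymbol{\psi}; \xi)$ solving $-\Delta \boldsymbol{\psi}+\nabla\xi = Q_0\mathbf{u}-\mathbf{u}_0$ with $\nabla\cdot\boldsymbol{\psi}=0$ in $\Omega$ and $\boldsymbol{\psi}=\b0$ on $\partial\Omega$, and I would assume the $H^2\times H^1$ regularity bound $\|\boldsymbol{\psi}\|_2+\|\xi\|_1\le C\|Q_0\mathbf{u}-\mathbf{u}_0\|$, which holds for convex polygonal/polyhedral $\Omega$. The key step is then to test the error equations \eqref{ee1}--\eqref{ee2} against the projections $Q_h\boldsymbol{\psi}$ and $\mathbb{Q}_h\xi$ of the dual solution, and to run the same weak-gradient/weak-divergence integration-by-parts manipulations used in Lemma \ref{error-equation} on the dual side. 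This produces a representation of $\|Q_0\mathbf{u}-\mathbf{u}_0\|^2$ as a sum of residual terms, each of which pairs an error quantity with a projection-approximation term for the dual solution.

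The main obstacle, and the crux of the proof, will be bounding these residual terms so as to gain the \emph{extra} power of $h$ needed to reach $h^{k+1}$ rather than the $h^k$ of Theorem \ref{e-estimate1}. Each residual term factors as a product of an $H^1$-type error in the primal solution (controlled by $\|Q_h\mathbf{u}-\mathbf{u}_h\|_{V_h^0}+\|\mathbb{Q}_hp-p_h\|\le Ch^k(\|\mathbf{u}\|_{k+1}+\|p\|_k)$ from Theorem \ref{e-estimate1}) against an approximation term for the dual solution of the form $\|\nabla\boldsymbol{\psi}-\mathbf{Q}_h(\nabla\boldsymbol{\psi})\|$ or $\|\xi-\mathbb{Q}_h\xi\|$, which is $O(h)$ by standard $L^2$-projection estimates together with the trace inequality (Lemma \ref{Trace inequality}) and inverse inequality (Lemma \ref{Inverse Inequality}). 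Multiplying the two factors and invoking the regularity estimate $\|\boldsymbol{\psi}\|_2+\|\xi\|_1\le C\|Q_0\mathbf{u}-\mathbf{u}_0\|$ yields
\begin{equation*}
\|Q_0\mathbf{u}-\mathbf{u}_0\|^2\le Ch^{k+1}(\|\mathbf{u}\|_{k+1}+\|p\|_k)\,\|Q_0\mathbf{u}-\mathbf{u}_0\|,
\end{equation*}
after which dividing through gives \eqref{eer4}. The delicate bookkeeping lies in correctly identifying and matching the boundary terms $\langle \mathbf{v}_0-\mathbf{v}_b,(\nabla\boldsymbol{\psi}-\mathbf{Q}_h(\nabla\boldsymbol{\psi}))\cdot\mathbf{n}\rangle_{\partial T}$ and $\langle\mathbf{v}_0-\mathbf{v}_b,(\xi-\mathbb{Q}_h\xi)\mathbf{n}\rangle_{\partial T}$ arising from $\ell_{\boldsymbol{\psi},\xi}$ and the stabilizer $s(Q_h\boldsymbol{\psi},\cdot)$, and in confirming that the orthogonality of the $L^2$ projections kills all lower-order contributions so that no term remains at only the $h^k$ level. **Finally**, having established \eqref{eer4} for the WG solution $\bar{\mathbf{u}}_h$, I would conclude by Theorem \ref{theorem:wg-hwg} that the same estimate holds verbatim for the HWG component $\mathbf{u}_0$, since $\mathbf{u}_h=\bar{\mathbf{u}}_h$.
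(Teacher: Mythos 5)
Your proposal is correct and takes essentially the same route as the paper: the paper's entire proof is the observation that the estimate follows from Theorem \ref{theorem:wg-hwg} (which gives $\mathbf{u}_h=\bar{\mathbf{u}}_h$) together with Theorem 7.2 of \cite{WY3}, which is exactly the equivalence-plus-citation structure you use. Your additional sketch of the duality argument merely unpacks what Theorem 7.2 of \cite{WY3} proves (including the implicit $H^2\times H^1$ regularity assumption on the dual Stokes problem), so it is consistent with, not different from, the paper's argument.
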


\section{Efficient Implementation via Variable
Reduction}\label{Variable_Recuction}

The degrees of freedom in the WG algorithm (\ref{WG1})-(\ref{WG2})
can be divided into two classes: (1) the interior variables
representing $\mathbf{u}_0$, and (2) the interface variables for
$\mathbf{u}_b$. For the HWG algorithm (\ref{HWG1})-(\ref{HWG2}),
more unknowns must be added to the picture from the Lagrange
multiplier $\boldsymbol{\lambda}_h$. Thus, the size of the discrete
system arising from either (\ref{WG1})-(\ref{WG2}) or
(\ref{HWG1})-(\ref{HWG2}) is enormously large.

The goal of this section is to present a Schur complement
formulation for the WG algorithm (\ref{WG1})-(\ref{WG2}) based on
the hybridized formulation (\ref{HWG1})-(\ref{HWG2}). The method
shall eliminate all the interior unknowns associated with
$\mathbf{u}_0$ and the interface unknow $\boldsymbol{\lambda}_h$,
and produce a much reduced system of linear equations involving only
the unknowns representing the interface variables $\mathbf{u}_b$.


\subsection{Theory of variable reduction}\label{SubVariable_Recuction}

Denote by $B_h$ the interface finite element space defined as the
restriction of $\mathcal{V}_h$ on the set of edges $\mathcal{E}_h$;
i.e.,
$$
B_h=\{\mathbf{v}=\{\boldsymbol{\mu}; p\}: \boldsymbol{\mu}\in
[P_{k-1}(e)]^d, p|_e\in P_{k-1}(e), e\in \mathcal{E}_h\}.
$$
$B_h$ is a Hilbert space with the following inner product
$$
\langle \mathbf{w}_b, \mathbf{q}_b\rangle_{\mathcal{E}_h}=\sum_{e\in
\mathcal{E}_h}\langle \mathbf{w}_b, \mathbf{q}_b\rangle_e, \quad
\forall\mathbf{w}_b, \mathbf{q}_b\in B_h.
$$
Let $B_h^0$ be the subspace of $B_h$ consisting of functions with
vanishing boundary value on $\partial \Omega$. It is not hard to see
that the interface finite element space $B_h$ is isomorphic to the
space of Lagrange multiplier $\Xi_h$. The Schur complement through
an elimination of the Lagrange multiplier $\boldsymbol{\lambda}_h$
and the interior unknown $\mathbf{u}_0$ can be implemented through a
map, denoted by $S_\mathbf{f}$.

We define the map $S_\mathbf{f}: B_h\rightarrow B_h^0$ as follows:
for a fixed $p_h$ and any given function $\mathbf{w}_b\in B_h$, the
image $S_{\mathbf{f}}(\mathbf{w}_b; p_h)$ can be obtained by
\begin{enumerate}
\item[\textbf{Step 1.}] On each element $T\in \mathcal{T}_h$, solve for
$\mathbf{w}_0$ in term of $\mathbf{w}_b$ and $p_h$ from the
following local problem:
\begin{equation}\label{W0}
a_T(\mathbf{w}_h,\mathbf{v})-b_T(\mathbf{v},
p_h)=(\mathbf{f},\mathbf{v}_0)_T, \quad \forall
\mathbf{v}=\{\mathbf{v}_0, \b0\}\in V_k(T),
\end{equation}
where $\mathbf{w}_h=\{\mathbf{w}_0, \mathbf{w}_b\}\in  V_k(T),
p_h\in W_k(T)$. Denote $\mathbf{w}_0=D_{\mathbf{f}}(\mathbf{w}_b;
p_h)$.
\item[\textbf{Step 2.}]  On each element $T\in \mathcal{T}_h$, solve for
$\boldsymbol{\zeta}_{h,T}\in \Lambda_{k}(\partial T)$ in term of
$\mathbf{w}_h=\{\mathbf{w}_0, \mathbf{w}_b\}$ and $p_h$ from the
following local problem:
\begin{equation}\label{zetah}
c_T(\mathbf{v},
\boldsymbol{\zeta}_{h,T})=a_T(\mathbf{w}_h,\mathbf{v})-b_T(\mathbf{v},
p_h), \quad \forall \mathbf{v}=\{\b0, \mathbf{v}_b\}\in V_k(T),
\end{equation}
Thus we obtain a function $\boldsymbol{\zeta}_{h,T}\in\Lambda_h$.
Denote $\boldsymbol{\zeta}_{h,T}=L_{\mathbf{f}}(\mathbf{w}_b; p_h)$.
\item[\textbf{Step 3.}] Define $S_{\mathbf{f}}(\mathbf{w}_b; p_h)$ by the
similarity of $\boldsymbol{\zeta}_{h}$ on interior edges and zero on
boundary edges; i.e.,
\begin{equation}\label{VRSimilarity}
S_{\mathbf{f}}(\mathbf{w}_b; p_h)=\langle\!\langle
\boldsymbol{\zeta}_h\rangle\!\rangle.
\end{equation}
\end{enumerate}
It follows from (\ref{VRSimilarity}) that
$S_{\mathbf{f}}(\mathbf{w}_b; p_h)\in B_h^0$. The following are two
properties regarding the operator $S_{\mathbf{f}}$ and the related
terms:
\begin{enumerate}
\item[\textbf{(1)\ }] The sum of (\ref{W0}) and (\ref{zetah}) yields
\begin{equation}\label{W0+zetah}
c_T(\mathbf{v},
\boldsymbol{\zeta}_{h,T})=a_T(\mathbf{w}_h,\mathbf{v})-b_T(\mathbf{v},
p_h)-(\mathbf{f},\mathbf{v}_0)_T, \quad \forall
\mathbf{v}=\{\mathbf{v}_0, \mathbf{v}_b\}\in V_k(T).
\end{equation}

\item[\textbf{(2)\ }] It follows from the superposition principle
we have the following identify
\begin{equation}\label{Superposition}
S_{\mathbf{f}}(\mathbf{w}_b; p_h)=S_{\b0}(\mathbf{w}_b;
p_h)+S_{\mathbf{f}}(\b0; 0), \quad \forall \mathbf{w}_b\in B_h,
p_h\in W_h,
\end{equation}
where $S_{\b0}$ is the operator with respect to $\mathbf{f}=\b0$.
\end{enumerate}


\begin{lemma}\label{Lemma:Lemma8.0}
The following identity holds true for the operator $S_0$:
\begin{equation}\label{LinearO}
\langle S_{\b0}(\mathbf{w}_b; p_h),
\mathbf{q}_b\rangle_{\mathcal{E}_h}=a(\mathbf{w}_h,
\mathbf{q}_h)-b(\mathbf{q}_h, p_h), \quad \forall \mathbf{w}_b,
\mathbf{q}_b\in B_h^0,
\end{equation}
where $\mathbf{w}_h=\{D_{\b0}(\mathbf{w}_b; p_h), \mathbf{w}_b\}$
and $\mathbf{q}_h=\{D_{\b0}(\mathbf{q}_b; p_h), \mathbf{q}_b\}$.
\end{lemma}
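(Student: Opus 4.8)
The plan is to trace both sides of (\ref{LinearO}) back to the single local identity (\ref{W0+zetah}) specialized to $\mathbf{f}=\mathbf{0}$, and to recognize the interface pairing on the left as a global $c(\cdot,\cdot)$ form. Write $\boldsymbol{\zeta}_h=L_{\mathbf{0}}(\mathbf{w}_b;p_h)$ for the element-wise function produced in Step~2, so that by (\ref{VRSimilarity}) we have $S_{\mathbf{0}}(\mathbf{w}_b;p_h)=\langle\!\langle\boldsymbol{\zeta}_h\rangle\!\rangle$. The left-hand side of (\ref{LinearO}) then reads $\sum_{e\in\mathcal{E}_h}\langle\langle\!\langle\boldsymbol{\zeta}_h\rangle\!\rangle_e,\mathbf{q}_b\rangle_e$, and the goal is to identify this with $c(\mathbf{q}_h,\boldsymbol{\zeta}_h)=\sum_{T}\langle\mathbf{q}_b,\boldsymbol{\zeta}_{h,T}\rangle_{\partial T}$.

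The first real step is the passage from an element-boundary sum to an edge sum. Since $\mathbf{q}_b\in B_h^0$ is single-valued on every edge, on an interior edge $e$ shared by $T_1$ and $T_2$ the two one-sided contributions $\langle\mathbf{q}_b,\boldsymbol{\zeta}_{h,T_1}\rangle_e$ and $\langle\mathbf{q}_b,\boldsymbol{\zeta}_{h,T_2}\rangle_e$ carry the same factor $\mathbf{q}_b$, so they combine to $\langle\mathbf{q}_b,\boldsymbol{\zeta}_{h,T_1}+\boldsymbol{\zeta}_{h,T_2}\rangle_e=\langle\mathbf{q}_b,\langle\!\langle\boldsymbol{\zeta}_h\rangle\!\rangle_e\rangle_e$; here the ``$+$'' in the definition (\ref{Similarity}) of the similarity matches exactly the sum of the two one-sided traces. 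On a boundary edge, $\mathbf{q}_b=0$ (because $\mathbf{q}_b\in B_h^0$) while $\langle\!\langle\boldsymbol{\zeta}_h\rangle\!\rangle=0$ by (\ref{Similarity}), so such edges contribute nothing on either side. This yields $c(\mathbf{q}_h,\boldsymbol{\zeta}_h)=\sum_{e\in\mathcal{E}_h}\langle\langle\!\langle\boldsymbol{\zeta}_h\rangle\!\rangle_e,\mathbf{q}_b\rangle_e$, that is, the left-hand side of (\ref{LinearO}) equals $c(\mathbf{q}_h,\boldsymbol{\zeta}_h)$.

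The second step is to evaluate $c(\mathbf{q}_h,\boldsymbol{\zeta}_h)$ via the defining property of $\boldsymbol{\zeta}_h$. The summed identity (\ref{W0+zetah}), taken with $\mathbf{f}=\mathbf{0}$, holds for every test function $\mathbf{v}=\{\mathbf{v}_0,\mathbf{v}_b\}\in V_k(T)$; choosing $\mathbf{v}=\mathbf{q}_h|_T=\{D_{\mathbf{0}}(\mathbf{q}_b;p_h),\mathbf{q}_b\}|_T$, which indeed lies in $V_k(T)$, gives $c_T(\mathbf{q}_h,\boldsymbol{\zeta}_{h,T})=a_T(\mathbf{w}_h,\mathbf{q}_h)-b_T(\mathbf{q}_h,p_h)$ on each element (the $(\mathbf{f},\mathbf{v}_0)_T$ term drops since $\mathbf{f}=\mathbf{0}$). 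Summing over $T\in\mathcal{T}_h$ and using the symmetry of $a_T$ produces $c(\mathbf{q}_h,\boldsymbol{\zeta}_h)=a(\mathbf{w}_h,\mathbf{q}_h)-b(\mathbf{q}_h,p_h)$, and combining with the first step completes the proof.

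I do not expect a genuine obstacle here: the content is purely algebraic. The only point that demands care is the bookkeeping in the first step --- correctly orienting the two one-sided traces, using the single-valuedness of $\mathbf{q}_b$ to factor it out, and verifying that the ``$+$'' convention of the similarity $\langle\!\langle\cdot\rangle\!\rangle$ is precisely what turns $\sum_T\langle\cdot\rangle_{\partial T}$ into $\sum_e\langle\cdot\rangle_e$ without spurious signs. It is also worth noting at the outset that $\mathbf{q}_h$ assembled from $\mathbf{q}_b\in B_h^0$ has vanishing jumps and vanishing boundary value, hence is a legitimate member of $\mathcal{V}_h^0\subset V_h^0$ and a permissible test function throughout.
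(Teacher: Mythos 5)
Your proof is correct and follows essentially the same route as the paper's: both identify the interface pairing $\langle S_{\mathbf{0}}(\mathbf{w}_b;p_h),\mathbf{q}_b\rangle_{\mathcal{E}_h}$ with $\sum_{T}c_T(\mathbf{q}_h,\boldsymbol{\zeta}_{h,T})$ by converting the edge sum into an element-boundary sum using the single-valuedness of $\mathbf{q}_b$, and then invoke the local identity (\ref{W0+zetah}) with $\mathbf{f}=\mathbf{0}$ tested against $\mathbf{q}_h$. Your explicit bookkeeping of the similarity convention and of boundary edges only spells out what the paper leaves implicit (and the appeal to symmetry of $a_T$ is unnecessary, since (\ref{W0+zetah}) already yields $a_T(\mathbf{w}_h,\mathbf{q}_h)$ directly).
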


\begin{proof}For any $\mathbf{w}_b, \mathbf{q}_b\in B_h^0$, from the definition of
the operator $S_{\mathbf{f}}$ we obtain
\begin{eqnarray*}
\mathbf{w}_h&=\{D_0(\mathbf{w}_b; p_h), \mathbf{w}_b\},\quad
\boldsymbol{\zeta}_h&=L_{\b0}(\mathbf{w}_b;
p_h),\\
\mathbf{q}_h&=\{D_0(\mathbf{q}_b; p_h), \mathbf{q}_b\}.\quad
\end{eqnarray*}
Letting $\mathbf{f}=\b0$ in (\ref{W0+zetah}) yields
\begin{eqnarray*}
\langle S_{\b0}(\mathbf{w}_b; p_h),
\mathbf{q}_b\rangle_{\mathcal{E}_h}&=& \sum_{e\in\mathcal{E}_h^0}
\langle \langle\!\langle \boldsymbol{\zeta}_h\rangle\!\rangle_e,
\mathbf{q}_b\rangle_{e}
\\
&=& \sum_{T\in\mathcal{T}_h} \langle  \boldsymbol{\zeta}_{h,T},
\mathbf{q}_b\rangle_{\partial T}
\\
&=& \sum_{T\in\mathcal{T}_h} c_T(\mathbf{q}_h,
\boldsymbol{\zeta}_{h,T})
\\
&=& \sum_{T\in\mathcal{T}_h} a_T(\mathbf{w}_h,
\mathbf{q}_h)-b_T(\mathbf{q}_h, p_h).
\end{eqnarray*}
This completes the identity (\ref{LinearO}).
\end{proof}

\begin{lemma}\label{Lemma:Lemma8.1}
Let $\mathbf{u}_h=\{\mathbf{u}_0,\mathbf{u}_b\}\in V_h$, $p_h\in
W_h$, and $\boldsymbol{\lambda}_h\in\Xi_h$ be the unique solution of
the hybridized WG algorithm (\ref{HWG1})-(\ref{HWG2}). Then, we have
$\mathbf{u}_h\in \mathcal{V}_h$ and $\mathbf{u}_b\in B_h$ is a well
defined function. Furthermore, it satisfies the following equation
\begin{equation}\label{VRSimilarity0}
S_{\mathbf{f}}(\mathbf{u}_b; p_h)=\langle\!\langle
\boldsymbol{\zeta}_h\rangle\!\rangle=\b0.
\end{equation}
\end{lemma}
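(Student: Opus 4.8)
The plan is to feed the actual HWG data $(\mathbf{u}_b;p_h)$ into the three local steps that define $S_{\mathbf{f}}$, show that these steps merely reconstruct the global quantities already produced by (\ref{HWG1})--(\ref{HWG2}), and then use the Lagrange-multiplier equation together with the membership $\boldsymbol{\lambda}_h\in\Xi_h$ to force the similarity $\langle\!\langle\boldsymbol{\zeta}_h\rangle\!\rangle$ to vanish. The first two assertions are essentially free: by Theorem \ref{theorem:wg-hwg} the velocity component of the HWG solution lies in $\mathcal{V}_h$, so $[\![\mathbf{u}_h]\!]_e=0$ on every interior edge; hence $\mathbf{u}_b$ is single-valued across interior edges and defines a genuine element of the interface space $B_h$, which is exactly the meaning of ``$\mathbf{u}_b\in B_h$ is well defined.''

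The key preliminary step is to identify the interior reconstruction of Step 1. First I would test (\ref{HWG1}) with $\mathbf{v}=\{\mathbf{v}_0,\b0\}$ supported on a single element $T$: since $\mathbf{v}_b=0$ the coupling term $c(\mathbf{v},\boldsymbol{\lambda}_h)$ drops out, and $a$, $b$, and the load localize to $T$, producing precisely the local equation (\ref{W0}) with datum $\mathbf{w}_b=\mathbf{u}_b$, solved by $\mathbf{u}_0$. Because the local problem (\ref{W0}) is uniquely solvable --- its homogeneous form gives $\nabla_w\{\mathbf{v}_0,\b0\}=0$ and $Q_b\mathbf{v}_0=0$, which in turn forces $\mathbf{v}_0=0$ by the same coercivity computation used in the uniqueness lemma --- I conclude $D_{\mathbf{f}}(\mathbf{u}_b;p_h)=\mathbf{u}_0$, i.e. the reconstructed $\mathbf{w}_h$ in the definition of $S_{\mathbf{f}}$ coincides with $\mathbf{u}_h$.

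With $\mathbf{w}_h=\mathbf{u}_h$ in hand, the combined local identity (\ref{W0+zetah}) reads $c_T(\mathbf{v},\boldsymbol{\zeta}_{h,T})=a_T(\mathbf{u}_h,\mathbf{v})-b_T(\mathbf{v},p_h)-(\mathbf{f},\mathbf{v}_0)_T$ for every $\mathbf{v}\in V_k(T)$. Summing over $T$ and testing with $\mathbf{v}\in\mathcal{V}_h^0$, the right-hand side becomes $a(\mathbf{u}_h,\mathbf{v})-b(\mathbf{v},p_h)-(\mathbf{f},\mathbf{v}_0)$, which by (\ref{HWG1}) equals $c(\mathbf{v},\boldsymbol{\lambda}_h)$; but for a continuous $\mathbf{v}$ vanishing on $\partial\Omega$ and $\boldsymbol{\lambda}_h\in\Xi_h$ we get $c(\mathbf{v},\boldsymbol{\lambda}_h)=\sum_{e\in\mathcal{E}_h^0}\langle\mathbf{v}_b,\langle\!\langle\boldsymbol{\lambda}_h\rangle\!\rangle_e\rangle_e=0$. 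Hence $\sum_{e\in\mathcal{E}_h^0}\langle\mathbf{v}_b,\langle\!\langle\boldsymbol{\zeta}_h\rangle\!\rangle_e\rangle_e=\sum_{T}c_T(\mathbf{v},\boldsymbol{\zeta}_{h,T})=0$ for all $\mathbf{v}\in\mathcal{V}_h^0$. Since the interface traces $\mathbf{v}_b$ of functions in $\mathcal{V}_h^0$ range over all single-valued piecewise $[P_{k-1}]^d$ functions vanishing on $\partial\Omega$, this forces $\langle\!\langle\boldsymbol{\zeta}_h\rangle\!\rangle_e=\b0$ on every interior edge; combined with the boundary-edge convention built into the definition of similarity, this gives $S_{\mathbf{f}}(\mathbf{u}_b;p_h)=\langle\!\langle\boldsymbol{\zeta}_h\rangle\!\rangle=\b0$.

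I expect the identification $D_{\mathbf{f}}(\mathbf{u}_b;p_h)=\mathbf{u}_0$ to be the main obstacle: one must both recognize that restricting the global test space $V_h^0$ to element-supported interior test functions exactly reproduces the Step-1 local equation, and justify the unique solvability of that local problem so that the two solutions must agree. Once $\mathbf{w}_h=\mathbf{u}_h$ is secured, the remainder is bookkeeping with the similarity $\langle\!\langle\cdot\rangle\!\rangle$ and the single fact that $\boldsymbol{\lambda}_h$ has vanishing similarity.
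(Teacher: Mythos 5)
Your proposal is correct, and its first two thirds coincide with the paper's proof: both obtain $\mathbf{u}_h\in\mathcal{V}_h$ (hence $\mathbf{u}_b\in B_h$) from Theorem \ref{theorem:wg-hwg}, and both recover the Step-1 problem (\ref{W0}) by testing (\ref{HWG1}) with $\mathbf{v}=\{\mathbf{v}_0,\mathbf{0}\}$ supported on a single element, giving $D_{\mathbf{f}}(\mathbf{u}_b;p_h)=\mathbf{u}_0$. You diverge at the final step. The paper tests (\ref{HWG1}) once more with edge-supported $\mathbf{v}=\{\mathbf{0},\mathbf{v}_b\}$ on each element, showing that $\boldsymbol{\lambda}_h|_{\partial T}$ itself solves the Step-2 problem (\ref{zetah}); the edgewise identification $\boldsymbol{\zeta}_{h,T}=\boldsymbol{\lambda}_{h,T}$ then gives $S_{\mathbf{f}}(\mathbf{u}_b;p_h)=\langle\!\langle\boldsymbol{\lambda}_h\rangle\!\rangle=\mathbf{0}$ directly from $\boldsymbol{\lambda}_h\in\Xi_h$. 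You instead leave $\boldsymbol{\zeta}_h$ unidentified: summing the combined identity (\ref{W0+zetah}), testing with continuous $\mathbf{v}\in\mathcal{V}_h^0$, and using (\ref{HWG1}) together with $c(\mathbf{v},\boldsymbol{\lambda}_h)=0$, you conclude that $\langle\!\langle\boldsymbol{\zeta}_h\rangle\!\rangle$ is orthogonal to every single-valued interface trace and hence vanishes edge by edge. Your route is slightly longer but tightens two points the paper leaves implicit: you prove unique solvability of the local problem (\ref{W0}), without which the identification $D_{\mathbf{f}}(\mathbf{u}_b;p_h)=\mathbf{u}_0$ is not justified, and you avoid relying on the edgewise identification $\boldsymbol{\zeta}_{h,T}=\boldsymbol{\lambda}_{h,T}$, which the paper's test functions (lying in $V_h^0$, hence vanishing on boundary edges) can only establish on interior edges of elements touching $\partial\Omega$. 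The paper's route, in exchange, is shorter and yields the extra information that the reconstructed multiplier coincides with $\boldsymbol{\lambda}_h$ on interior edges.
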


\begin{proof}
Since $(\mathbf{u}_h; p_h;\boldsymbol{\lambda}_h)$ is the unique
solution of the HWG scheme (\ref{HWG1})-(\ref{HWG2}), then from
Theorem \ref{theorem:wg-hwg} we have $[\![\mathbf{u}_h]\!]_e=0$ on
each interior edge or flat face $e\in\mathcal{E}_h^0$ and
$\mathbf{u}_b=Q_b\textbf{g}$ on $\partial \Omega$. Thus,
$\mathbf{u}_h\in \mathcal{V}_h$ and its restriction on
$\mathcal{E}_h$ is a well-defined function in $B_h$.

In order to verify (\ref{VRSimilarity0}), we take
$\mathbf{v}=\{\mathbf{v}_0,\b0\}\in V_k(T)$ on $T$ and zero
elsewhere in (\ref{HWG1}), it follows that $\mathbf{u}_h$ satisfies
the local equation
\begin{equation*}
a_T(\mathbf{u}_h,\mathbf{v})-b_T(\mathbf{v},
p_h)=(\mathbf{f},\mathbf{v}_0)_T, \quad \forall
\mathbf{v}=\{\mathbf{v}_0, \b0\}\in V_k(T).
\end{equation*}

Next, taking $\mathbf{v}=\{\b0,\mathbf{v}_b\}\in V_k(T)$ on $T$ and
zero elsewhere in (\ref{HWG1}), yields that $\boldsymbol{\lambda}_h$
satisfies the local equation
\begin{equation*}
c_T(\mathbf{v},
\boldsymbol{\lambda}_{h,T})=a_T(\mathbf{w}_h,\mathbf{v})-b_T(\mathbf{v},
p_h), \quad \forall \mathbf{v}=\{\b0, \mathbf{v}_b\}\in V_k(T),
\end{equation*}
where $\boldsymbol{\lambda}_{h,T}$ is the restriction of
$\boldsymbol{\lambda}_h$ on $\partial T$. Thus, from the definition
of the operator $S_{\mathbf{f}}$, we obtain
\begin{equation*}
S_{\mathbf{f}}(\mathbf{u}_b; p_h)=\langle\!\langle
\boldsymbol{\lambda}_h\rangle\!\rangle.
\end{equation*}
Combining with the fact that $\boldsymbol{\lambda}_h\in\Xi_h$, we
have $\langle\!\langle \boldsymbol{\zeta}_h\rangle\!\rangle=\b0$,
which completes the proof of the lemma.
\end{proof}

\begin{lemma}\label{Lemma:Lemma8.2}
Let $\bar{\mathbf{u}}_b\in B_h$ be a function satisfying
$\bar{\mathbf{u}}_b=Q_b\textbf{g}$ on $\partial \Omega$,
$\bar{\mathbf{u}}_b$ and $p_h$ satisfy the following operator
equation
\begin{equation}\label{VRSimilarity02}
S_{\mathbf{f}}(\bar{\mathbf{u}}_b; p_h)=\b0.
\end{equation}
Then, $(\bar{\mathbf{u}}_h; p_h)\in V_h\times W_h$ is the solution
of the WG finite element solution arising from
(\ref{WG1})-(\ref{WG2}). Here $\bar{\mathbf{u}}_0$ is the solution
of the following local problems on each element $T\in\mathcal{T}_h$,
\begin{equation}\label{W0-new}
a_T(\bar{\mathbf{u}}_h,\mathbf{v})-b_T(\mathbf{v},
p_h)=(\mathbf{f},\mathbf{v}_0)_T, \quad \forall
\mathbf{v}=\{\mathbf{v}_0, \b0\}\in V_k(T),
\end{equation}
with $\bar{\mathbf{u}}_h=\{\bar{\mathbf{u}}_0,\bar{\mathbf{u}}_b\}$.
\end{lemma}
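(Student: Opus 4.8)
The plan is to reverse the construction of $S_{\mathbf{f}}$, assemble a full hybridized triple $(\bar{\mathbf{u}}_h; p_h; \boldsymbol{\zeta}_h)$, verify that it solves (\ref{HWG1})--(\ref{HWG2}), and then invoke Theorem \ref{theorem:wg-hwg} to land on the WG scheme (\ref{WG1})--(\ref{WG2}). First I would set $\bar{\mathbf{u}}_0 = D_{\mathbf{f}}(\bar{\mathbf{u}}_b; p_h)$ from the local Step 1 problems (\ref{W0-new}), put $\bar{\mathbf{u}}_h = \{\bar{\mathbf{u}}_0, \bar{\mathbf{u}}_b\}$, and generate the local multipliers $\boldsymbol{\zeta}_{h,T} = L_{\mathbf{f}}(\bar{\mathbf{u}}_b; p_h)$ from the Step 2 equation (\ref{zetah}), patching them into $\boldsymbol{\zeta}_h \in \Lambda_h$. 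Since $\bar{\mathbf{u}}_b \in B_h$ is single-valued on each edge and equals $Q_b\mathbf{g}$ on $\partial\Omega$, the patched field has $[\![\bar{\mathbf{u}}_h]\!]_e = \b0$ for every $e \in \mathcal{E}_h^0$, so $\bar{\mathbf{u}}_h \in \mathcal{V}_h$ with the prescribed boundary data.

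Next I would read the hypothesis $S_{\mathbf{f}}(\bar{\mathbf{u}}_b; p_h) = \langle\!\langle \boldsymbol{\zeta}_h \rangle\!\rangle = \b0$ as the statement that $\boldsymbol{\zeta}_h$ has vanishing similarity across every edge, i.e. $\boldsymbol{\zeta}_h \in \Xi_h$ is an admissible multiplier. Summing the combined local identity (\ref{W0+zetah}) over all $T \in \mathcal{T}_h$ gives, for every $\mathbf{v} \in V_h^0$,
\[
a(\bar{\mathbf{u}}_h, \mathbf{v}) - b(\mathbf{v}, p_h) - c(\mathbf{v}, \boldsymbol{\zeta}_h) = (\mathbf{f}, \mathbf{v}_0),
\]
which is exactly (\ref{HWG1}) with the constructed third component. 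For the continuity line of (\ref{HWG2}), the fact that $\bar{\mathbf{u}}_h \in \mathcal{V}_h$ and the interior-edge reorganization used in the proof of Theorem \ref{theorem:wg-hwg} give $c(\bar{\mathbf{u}}_h, \boldsymbol{\mu}) = \sum_{e \in \mathcal{E}_h^0} \langle [\![\bar{\mathbf{u}}_h]\!]_e, \boldsymbol{\mu} \rangle_e = 0$ for all $\boldsymbol{\mu} \in \Xi_h$; the same reorganization shows $c(\mathbf{v}, \boldsymbol{\zeta}_h) = 0$ whenever $\mathbf{v} \in \mathcal{V}_h^0$, so restricting the displayed identity to $\mathbf{v} \in \mathcal{V}_h^0$ already reproduces (\ref{WG1}).

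The remaining, and most delicate, step is the discrete mass conservation $b(\bar{\mathbf{u}}_h, q) = 0$ for all $q \in W_h$, i.e. the pressure line of (\ref{HWG2}) and equation (\ref{WG2}). This is the only ingredient not supplied by the momentum-type Step 1 solves, and it is where I expect the main work to lie. I would recover it from the pressure-trace component carried by $B_h$: the operator equation $S_{\mathbf{f}}(\bar{\mathbf{u}}_b; p_h) = \b0$ is an identity in the product space $B_h^0$, and its component conjugate to the pressure directions---equivalently, the divergence block of the reduced saddle-point system read off from Lemma \ref{Lemma:Lemma8.0}---forces $b(\bar{\mathbf{u}}_h, q) = 0$. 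With (\ref{HWG1}) and both lines of (\ref{HWG2}) now in hand, $(\bar{\mathbf{u}}_h; p_h; \boldsymbol{\zeta}_h)$ solves the HWG scheme; by its uniqueness and Theorem \ref{theorem:wg-hwg}, $(\bar{\mathbf{u}}_h; p_h)$ is the WG finite element solution of (\ref{WG1})--(\ref{WG2}), with $\bar{\mathbf{u}}_0$ given by (\ref{W0-new}).
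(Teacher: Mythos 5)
Your first two paragraphs reproduce the paper's own argument step for step: build $\bar{\mathbf{u}}_0$ from the Step 1 solves, build $\boldsymbol{\zeta}_h$ from the Step 2 solves (\ref{zetah}), read the hypothesis $S_{\mathbf{f}}(\bar{\mathbf{u}}_b;p_h)=\langle\!\langle\boldsymbol{\zeta}_h\rangle\!\rangle=\mathbf{0}$ as saying $\boldsymbol{\zeta}_h\in\Xi_h$, sum the combined local identity (\ref{W0+zetah}) over all $T\in\mathcal{T}_h$, and cancel the $c$-term for test functions without jumps to obtain (\ref{WG1}). In fact you are slightly more careful than the paper on the cancellation: $c(\mathbf{v},\boldsymbol{\zeta}_h)=\sum_{e\in\mathcal{E}_h^0}\langle \mathbf{v}_b,\langle\!\langle\boldsymbol{\zeta}_h\rangle\!\rangle_e\rangle_e$ requires $\mathbf{v}_b$ to be single-valued, i.e.\ $\mathbf{v}\in\mathcal{V}_h^0$, whereas the paper restricts to $V_h^0$.

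The genuine gap is in your third paragraph. The object you invoke to get the mass conservation $b(\bar{\mathbf{u}}_h,q)=0$ --- a ``component of $S_{\mathbf{f}}(\bar{\mathbf{u}}_b;p_h)=\mathbf{0}$ conjugate to the pressure directions,'' or a ``divergence block read off from Lemma \ref{Lemma:Lemma8.0}'' --- does not exist. By Steps 1--3 of Section \ref{SubVariable_Recuction}, the image of $S_{\mathbf{f}}$ is the similarity $\langle\!\langle\boldsymbol{\zeta}_h\rangle\!\rangle$ of the vector-valued Lagrange multiplier only; nothing in the construction is ever tested against $q\in W_h$ (the stray ``$p|_e$'' in the displayed definition of $B_h$ plays no role anywhere), and the identity of Lemma \ref{Lemma:Lemma8.0} is paired exclusively with edge functions $\mathbf{q}_b$. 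Consequently the hypothesis $S_{\mathbf{f}}(\bar{\mathbf{u}}_b;p_h)=\mathbf{0}$ together with the Step 1 solves is \emph{exactly equivalent} to (\ref{WG1}) for $\mathbf{v}\in\mathcal{V}_h^0$, and (\ref{WG1}) alone cannot force $b(\bar{\mathbf{u}}_h,q)=0$: for any $\delta p\in W_h$, $\delta p\neq 0$, one may solve $a(\delta\mathbf{u},\mathbf{v})=b(\mathbf{v},\delta p)$ for $\delta\mathbf{u}\in\mathcal{V}_h^0$ and perturb the true WG solution by $(\delta\mathbf{u},\delta p)$; the perturbed pair still satisfies the boundary condition and the operator equation, yet has a different pressure. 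So the step fails, and no appeal to the structure of $S_{\mathbf{f}}$ can fill it. You should know, however, that your diagnosis of where the difficulty lies is correct and exposes a defect in the paper itself: the paper's proof also never verifies (\ref{WG2}) (equivalently the $b$-part of (\ref{HWG2})), and its closing appeal to Theorem \ref{theorem:wg-hwg} is illegitimate because that theorem's hypotheses include precisely the missing equation. A correct version must either add $b(\bar{\mathbf{u}}_h,q)=0$ for all $q\in W_h$ as a hypothesis of the lemma, or carry the discrete divergence equation alongside (\ref{Eqn8.6}) as part of the reduced system --- which any implementation must do in practice, since (\ref{Eqn8.6}) by itself is underdetermined in the pair $(\mathbf{H}_b,p_h)$.
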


\begin{proof} For each $T\in \mathcal{T}_h$, we solve for $\bar{\boldsymbol{\lambda}}_{h,T}\in
\Lambda_k(\partial T)$ from the local problem
\begin{equation}\label{zetah-new}
c_T(\mathbf{v},
\bar{\boldsymbol{\lambda}}_{h,T})=a_T(\bar{\mathbf{u}}_h,\mathbf{v})-b_T(\mathbf{v},
p_h), \quad \forall \mathbf{v}=\{\b0, \mathbf{v}_b\}\in V_k(T).
\end{equation}
Define a function $\bar{\boldsymbol{\lambda}}_h\in \Lambda_h$ by
$\bar{\boldsymbol{\lambda}}_h|_{\partial
T}=\bar{\boldsymbol{\lambda}}_{h,T}$. Since $(\bar{\mathbf{u}}_b;
p_h)\in B_h\times W_h$ satisfies the operator equation
(\ref{VRSimilarity02}), $\bar{\mathbf{u}}_b$ satisfies the boundary
condition, and $\bar{\mathbf{u}}_0$ is given by (\ref{W0-new}), it
follows from the definition of the operator $S_{\mathbf{f}}$ that
\begin{equation}\label{Equation8-1}
\langle\!\langle\bar{\boldsymbol{\lambda}}_h\rangle\!\rangle=S_{\mathbf{f}}(\bar{\mathbf{u}}_b;
p_h)=0,
\end{equation}
which implies $\bar{\boldsymbol{\lambda}}_h\in \Xi_h$.

By subtracting (\ref{zetah-new}) from (\ref{W0-new}), we have
\begin{equation}\label{W0+zetah-new}
a_T(\bar{\mathbf{u}}_h,\mathbf{v})-b_T(\mathbf{v},
p_h)-c_T(\mathbf{v},
\bar{\boldsymbol{\lambda}}_{h,T})=(\mathbf{f},\mathbf{v}_0)_T, \quad
\forall \mathbf{v}=\{\mathbf{v}_0, \mathbf{v}_b\}\in V_k(T).
\end{equation}
By summing up the above equations over all $T\in \mathcal{T}_h$, we
obtain
\begin{equation}\label{W0+zetah-new1}
a(\bar{\mathbf{u}}_h,\mathbf{v})-b(\mathbf{v}, p_h)-c(\mathbf{v},
\bar{\boldsymbol{\lambda}}_{h})=(\mathbf{f},\mathbf{v}_0), \quad
\forall \mathbf{v}=\{\mathbf{v}_0, \mathbf{v}_b\}\in V_h.
\end{equation}
By restricting $\mathbf{v}$ to the weak finite element space $V_h^0$
and using (\ref{Equation8-1}) we arrive at
$$
c(\mathbf{v}, \bar{\boldsymbol{\lambda}}_{h})=\sum_{e\in
\mathcal{E}_h^0}\langle \langle\!\langle
\bar{\boldsymbol{\lambda}}_h\rangle\!\rangle_e,
\mathbf{v}_b\rangle_e =0.
$$
Thus we obtain
$$
a(\bar{\mathbf{u}}_h,\mathbf{v})-b(\mathbf{v},
p_h)=(\mathbf{f},\mathbf{v}_0), \quad \forall
\mathbf{v}=\{\mathbf{v}_0, \mathbf{v}_b\}\in V_h^0.
$$
Recall that the assumption $\bar{\mathbf{u}}_b=Q_b\textbf{g}$ on
$\partial \Omega$ and Theorem \ref{theorem:wg-hwg}, we have
$\bar{\mathbf{u}}_h$ is the WG finite element solution of
(\ref{WG1})-(\ref{WG2}). This completes the proof of the lemma.
\end{proof}

Combining the above two lemmas yields the following result.
\begin{theorem}\label{Thm:Thm8.3}
Let $\bar{\mathbf{u}}_b\in B_h$ be any function such that
$\bar{\mathbf{u}}_b=Q_b\textbf{g}$ on $\partial \Omega$. Let
$\bar{\mathbf{u}}_0$ be the solution of (\ref{W0-new}). Then
$(\bar{\mathbf{u}}_h; p_h)$ is the solution of
(\ref{WG1})-(\ref{WG2}) if and only if $\bar{\mathbf{u}}_b$
satisfies the following operator equation
\begin{equation}\label{Eqn8.4}
S_{\mathbf{f}}(\bar{\mathbf{u}}_b; p_h)=\b0.
\end{equation}
\end{theorem}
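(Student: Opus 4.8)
The plan is to prove Theorem~\ref{Thm:Thm8.3} as an immediate consequence of the two preceding lemmas, by splitting the biconditional into its two implications and invoking Lemma~\ref{Lemma:Lemma8.1} and Lemma~\ref{Lemma:Lemma8.2} respectively. The hypotheses of the theorem---that $\bar{\mathbf{u}}_b\in B_h$ satisfies $\bar{\mathbf{u}}_b=Q_b\textbf{g}$ on $\partial\Omega$ and that $\bar{\mathbf{u}}_0$ solves the local problems (\ref{W0-new})---are exactly the standing assumptions shared by both lemmas, so no additional setup is needed.

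For the backward implication, I would assume the operator equation (\ref{Eqn8.4}) holds, i.e. $S_{\mathbf{f}}(\bar{\mathbf{u}}_b;p_h)=\b0$. Then the hypotheses of Lemma~\ref{Lemma:Lemma8.2} are met verbatim: $\bar{\mathbf{u}}_b$ satisfies the boundary condition, $\bar{\mathbf{u}}_0$ is defined by (\ref{W0-new}), and (\ref{VRSimilarity02}) is precisely (\ref{Eqn8.4}). Lemma~\ref{Lemma:Lemma8.2} then concludes directly that $(\bar{\mathbf{u}}_h;p_h)$ is the solution of (\ref{WG1})-(\ref{WG2}), giving one direction of the equivalence.

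For the forward implication, I would assume $(\bar{\mathbf{u}}_h;p_h)$ solves the WG scheme (\ref{WG1})-(\ref{WG2}). By the equivalence of the WG and HWG formulations established in Theorem~\ref{theorem:wg-hwg}, this pair extends to a solution $(\bar{\mathbf{u}}_h;p_h;\boldsymbol{\lambda}_h)$ of the hybridized scheme (\ref{HWG1})-(\ref{HWG2}), which is unique by the solvability lemma. Lemma~\ref{Lemma:Lemma8.1} applied to this HWG solution then yields $S_{\mathbf{f}}(\bar{\mathbf{u}}_b;p_h)=\langle\!\langle\boldsymbol{\lambda}_h\rangle\!\rangle=\b0$, which is exactly (\ref{Eqn8.4}). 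A minor point to check here is that the $\bar{\mathbf{u}}_0$ defined in the theorem's statement via (\ref{W0-new}) coincides with the interior component of the HWG solution; this follows because both satisfy the same local problem (\ref{W0}) with the same data $\bar{\mathbf{u}}_b$ and $p_h$, and that local problem determines $\mathbf{u}_0$ uniquely.

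I expect the proof to be essentially a bookkeeping exercise rather than a source of real difficulty, since both lemmas have already done the substantive work. The only step requiring mild care is verifying that the data fed into the operator $S_{\mathbf{f}}$ in each direction matches what the two lemmas assume---in particular that $\bar{\mathbf{u}}_0$ is the unique local solution determined by $\bar{\mathbf{u}}_b$ and $p_h$, so that the forward and backward constructions refer to the same object. Once this consistency is noted, the biconditional follows by simply citing Lemma~\ref{Lemma:Lemma8.1} for necessity and Lemma~\ref{Lemma:Lemma8.2} for sufficiency.
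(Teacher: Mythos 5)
Your proposal is correct and matches the paper's approach exactly: the paper proves this theorem simply by "combining the above two lemmas," i.e., Lemma \ref{Lemma:Lemma8.1} (via Theorem \ref{theorem:wg-hwg} and uniqueness) for the forward implication and Lemma \ref{Lemma:Lemma8.2} for the backward implication, which is precisely your decomposition. Your additional check that $\bar{\mathbf{u}}_0$ from (\ref{W0-new}) agrees with the interior component of the HWG solution is a sensible bookkeeping detail the paper leaves implicit.
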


\subsection{Computational algorithm with reduced variables}

Together with the equation (\ref{Superposition}), (\ref{Eqn8.4})
gives rise to
\begin{equation}\label{Eqn8.5}
S_{\b0}(\bar{\mathbf{u}}_b; p_h)=-S_{\mathbf{f}}(\b0; 0).
\end{equation}
Let $\textbf{G}_b\in B_h$ be a finite element function such that
$\textbf{G}_b=Q_b\textbf{g}$ on the boundary of $\Omega$ and zero
elsewhere. From the linearity of operator $S_{\b0}$, we have
$$
S_{\b0}(\bar{\mathbf{u}}_b;
p_h)=S_{\b0}(\bar{\mathbf{u}}_b-\textbf{G}_b;
p_h)+S_{\b0}(\textbf{G}_b; p_h).
$$
Substituting this equation into (\ref{Eqn8.5}), one obtains
$$
S_{\b0}(\bar{\mathbf{u}}_b-\textbf{G}_b; p_h)=-S_{\mathbf{f}}(\b0;
0)-S_{\b0}(\textbf{G}_b; p_h).
$$
Note that the function
$\textbf{H}_b=\bar{\mathbf{u}}_b-\textbf{G}_b$ has vanishing
boundary value. Letting $\textbf{r}_b=-S_{\mathbf{f}}(\b0;
0)-S_{\b0}(\textbf{G}_b; p_h)$, we have
\begin{equation}\label{Eqn8.6}
S_{\b0}(\textbf{H}_b; p_h)=\textbf{r}_b.
\end{equation}
The reduced system of linear equations (\ref{Eqn8.6}) is actually a
Schur complement formulation for the WG finite element scheme
(\ref{WG1})-(\ref{WG2}). Note that (\ref{Eqn8.6}) involves only the
variables representing the value of the function on
$\mathcal{E}^0_h$. This is clearly a significant reduction on the
size of the linear system that has to be solved in the WG finite
element method.

\begin{algorithm3}
The solution $(\mathbf{u}_h; p_h)$ to the WG algorithm
(\ref{WG1})-(\ref{WG2}) can be obtained in the following steps:
\begin{enumerate}
\item[\textbf{Step 1.}] On each element $T\in \mathcal{T}_h$, solve for
$\textbf{r}_b$ from the following equation:
\begin{equation*}
\textbf{r}_b=-S_{\mathbf{f}}(\b0; 0)-S_{\b0}(\textbf{G}_b; p_h)
\end{equation*}
This step requires the inversion of local stiffness matrices and can
be accomplished in parallel. The computational complexity is linear
with respect to the number of unknowns.
\item[\textbf{Step 2.}]  Solve for
$\{\textbf{H}_b, p_h\}$ by means of the operator equation
(\ref{Eqn8.6}).
\item[\textbf{Step 3.}] Compute $\mathbf{u}_b=\textbf{G}_b+\textbf{H}_b$ to
get the solution on element boundaries. Then on each element $T$,
compute  $\mathbf{u}_0=D_{\mathbf{f}}(\mathbf{u}_b; p_h)$ by solving
the local problem (\ref{W0}). This task can also be implemented in
parallel, and the computational complexity is proportional to the
number of unknowns.
\end{enumerate}

\end{algorithm3}

Note that, Step (2) in the Variable Reduction Algorithm 1 is the
only computation extensive part of the implementation.

\section{Numerical Experiments}\label{Section:NumericalResults}

The goal of this section is to report some numerical results for the
hybridization weak Galerkin finite element method proposed and
analyzed in previous sections.

A Schur complement technique of the HWG method is utilized to
decrease the degree of freedom.  {\color{black}For example, if
$\Omega=[0,1]\times[0,1]$ and the uniform triangulation is used with
mesh size $\sqrt2/n$, the number of elements is $N_T=2n^2$ and the
number of edges is $N_E=3n^2+2n$. If $k=1$, then the degree of
freedom for usual weak Galerkin method is $7N_T+2N_E=20n^2+4n$, the
degree of freedom for hybridized weak Galerkin method is
$13N_T+2N_E=32n^2+4n$, while the degree of freedom can be reduced to
$2N_E+N_T=8n^2+4n$ by using the Schur complement.}

Let $(\mathbf{u};p)$ be the exact solution of
(\ref{Stokes})-(\ref{boundarycond}) and $(\mathbf{u}_h;p_h)$ be the
numerical solution of (\ref{WG1})-(\ref{WG2}). Denote
$\mathbf{e}_h=Q_h\mathbf{u}-\mathbf{u}_h$ and
$\varepsilon_h=\widetilde{Q}_hp-p_h.$ The error for the weak
Galerkin solution is measured in four norms defined as follows:
\begin{eqnarray*}
\displaystyle|\!|\!| \mathbf{e}_h
|\!|\!|^2&=&\sum_{T\in\mathcal{T}_h} \left(\int_T|\nabla_w
\mathbf{e}_h|^2 dT+ h_T^{-1}\int_{\partial
T}(\mathbf{e}_0-\mathbf{e}_b)^2 ds\right), \nonumber
\\[1mm]
\displaystyle\|\mathbf{e}_h\|^2&=&\sum_{T\in\mathcal{T}_h}
\int_T|\mathbf{e}_h|^2 dT,
\\[1mm]
\displaystyle\|\varepsilon_h\|^2&=&\sum_{T\in\mathcal{T}_h}
\int_T|\varepsilon_h|^2 dT,
\\
\displaystyle\|Q_b\boldsymbol{\lambda}-\boldsymbol{\lambda}_h\|^2&=&\sum_{e\in\mathcal{E}_h}
h_e \int_e |Q_b\boldsymbol{\lambda}-\boldsymbol{\lambda}_h|^2 ds.
\\[1mm]
\end{eqnarray*}

\textbf{Example 7.1} {\rm Consider the problem
(\ref{Stokes})-(\ref{boundarycond}) in the square domain
$\Omega=(0,1)^2$. The HWG finite element space $k = 1$ is employed
in the numerical discretization. It has the analytic solution
\begin{eqnarray*}
\mathbf{u}=\left(\begin{array}{c} \sin(2\pi x) \cos(2\pi y)
\\
-\cos(2\pi x) \sin(2\pi y)
\end{array}\right) \ {\rm and\ } p=x^2 y^2-\frac19.
\end{eqnarray*}
The right hand side function $\mathbf{f}$ in (\ref{Stokes}) is
computed to match the exact solution. The mesh size is denoted by
$h$.

Table 7.1 shows that the errors and convergence rates of Example 7.1
in $|\!|\!| \cdot |\!|\!|-$ norm and $L^2-$norm for the HWG-FEM
solution $\mathbf{u}$ are of order $O(h)$ and $O(h^2)$ when $k=1$,
respectively.

Table 7.2 shows that the errors and orders of Example 7.1 in
$L^2-$norm for pressure and $\boldsymbol{\lambda}$ . The numerical
results are also consistent with theory for these two cases. }

\begin{center}
Table 7.1. Numerical errors and orders for $\mathbf{u}$ of Example 7.1. \\
\begin{center}
    \begin{tabular}{ | c || c | c | c | c |}
    \hline
     $h$  & \  $|\!|\!| \mathbf{e}_h|\!|\!|$\  & \quad order \quad \,  &  \  $\|\mathbf{e}_h\|$  \   & \ \  order \ \  \\ \hline \hline
     1/4  & \  5.8950e+00  \   &          & \ 1.3555e+00 \ &           \\ \hline
     1/8  &  2.9253e+00    & 1.0109   & 2.3750e-01   & 2.5128     \\ \hline
    1/16  &  1.4552e+00    & 1.0074   & 4.9049e-02   & 2.2756     \\ \hline
    1/32  &  7.2651e-01    & 1.0022   & 1.1500e-02   & 2.0926     \\ \hline
    1/64  &  3.6312e-01    & 1.0006   & 2.8254e-03   & 2.0251     \\ \hline
    1/128 &  1.8154e-01    & 1.0001   & 7.0325e-04   & 2.0063     \\ \hline
    \hline
    \end{tabular}
\end{center}
\end{center}

\begin{center}
Table 7.2. Numerical errors and orders for $p$ and $\boldsymbol{\lambda}$ of Example 7.1. \\
\begin{center}
    \begin{tabular}{ | c || c | c | c | c |}
    \hline
     $h$  & \  $\|\varepsilon_h\|$\  & \quad order \quad \,  &  \  $\|Q_b\boldsymbol{\lambda}-\boldsymbol{\lambda}_h\|$  \   & \ \  order \ \  \\ \hline \hline
     1/4  & \  5.1609e-01  \   &          & \ 8.6908e-01 \ &           \\ \hline
     1/8  &    2.9426e-01  & 0.8105   & 3.2951e-01   & 1.3992     \\ \hline
    1/16  &    1.4706e-01  & 1.0007   & 9.8958e-02   & 1.7354     \\ \hline
    1/32  &    7.2990e-02  & 1.0107   & 2.6698e-02   & 1.8901     \\ \hline
    1/64  &    3.6391e-02  & 1.0041   & 6.9159e-03   & 1.9487     \\ \hline
    1/128 &    1.8180e-02  & 1.0012   & 1.7681e-03   & 1.9677     \\ \hline
    \hline
    \end{tabular}
\end{center}
\end{center}

\textbf{Example 7.2} {\rm Consider the problem
(\ref{Stokes})-(\ref{boundarycond}) in the square domain
$\Omega=(0,1)^2$. The HWG finite element space $k = 1$ is employed
in the numerical discretization. It has the analytic solution
\begin{eqnarray*}
\mathbf{u}=\left(\begin{array}{c} -2xy(x-1)(y-1)x(x-1)(2y-1)
\\
2xy(x-1)(y-1)y(y-1)(2x-1)
\end{array}\right)
\end{eqnarray*}
and
$$
p=x^4+y^4-\frac25.
$$
The right hand side function $\mathbf{f}$ in (\ref{Stokes}) is
computed to match the exact solution. The mesh size is denoted by
$h$.

The numerical results are presented in Tables 7.3-7.4 which confirm
the theory developed in previous sections. }

\begin{center}
Table 7.3. Numerical errors and orders for $\mathbf{u}$ of Example 7.2. \\
\begin{center}
    \begin{tabular}{ | c || c | c | c | c |}
    \hline
     $h$  & \  $|\!|\!| \mathbf{e}_h|\!|\!|$\  & \quad order \quad \,  &  \  $\|\mathbf{e}_h\|$  \   & \ \  order \ \  \\ \hline \hline
     1/4  & \  2.8805e-01  \   &          & \ 4.2555e-02 \ &           \\ \hline
     1/8  &  1.4913e-01    & 0.9498   & 1.0184e-02   & 2.0630     \\ \hline
    1/16  &  7.5883e-02    & 0.9747   & 2.5894e-03   & 1.9756     \\ \hline
    1/32  &  3.8233e-02    & 0.9890   & 6.5809e-04   & 1.9762     \\ \hline
    1/64  &  1.9173e-02    & 0.9957   & 1.6598e-04   & 1.9872     \\ \hline
    1/128 &  9.5963e-03    & 0.9985   & 4.1663e-05   & 1.9942     \\ \hline
    \hline
    \end{tabular}
\end{center}
\end{center}

\begin{center}
Table 7.4. Numerical errors and orders for $p$ and $\boldsymbol{\lambda}$ of Example 7.2. \\
\begin{center}
    \begin{tabular}{ | c || c | c | c | c |}
    \hline
     $h$  & \  $\|\varepsilon_h\|$\  & \quad order \quad \,  &  \  $\|Q_b\boldsymbol{\lambda}-\boldsymbol{\lambda}_h\|$  \   & \ \  order \ \  \\ \hline \hline
     1/4  & \  7.7802e-02  \   &          & \ 1.8028e-01 \ &           \\ \hline
     1/8  &    3.7184e-02  & 1.0651   & 8.2876e-02   & 1.1212     \\ \hline
    1/16  &    1.4725e-02  & 1.3364   & 3.2275e-02   & 1.3605     \\ \hline
    1/32  &    5.1629e-03  & 1.5121   & 1.1179e-02   & 1.5297     \\ \hline
    1/64  &    1.6890e-03  & 1.6120   & 3.5619e-03   & 1.6500     \\ \hline
    1/128 &    5.4636e-04  & 1.6282   & 1.0720e-03   & 1.7324     \\ \hline
    \hline
    \end{tabular}
\end{center}
\end{center}

\Acknowledgements{This work was supported by National Natural
Science Foundation of China (Grant No. 11371171), and by the Program
for New Century Excellent Talents in University of Ministry of
Education of China.}


\end{document}